\documentclass{article}
\textheight24.5cm
\textwidth16.5cm
\oddsidemargin0cm
\evensidemargin0cm
\topmargin-2cm
\sloppy
\usepackage{amssymb,amsmath,amscd}
\usepackage{amsthm}
\usepackage{latexsym}
\usepackage{color}
\usepackage{enumerate}

\newtheorem{theorem}{Theorem}[section]
\newtheorem{lemma}[theorem]{Lemma}
\newtheorem{corollary}[theorem]{Corollary}
\newtheorem{proposition}[theorem]{Proposition}
\theoremstyle{definition}
\newtheorem{example}[theorem]{Example}
\newtheorem{definition}[theorem]{Definition}
\theoremstyle{remark}
\newtheorem{remark}[theorem]{Remark} 
\numberwithin{equation}{section}
\newcommand{\ignore}[1]{}
\newcommand{\tn}{\ensuremath\mathbf{t}}

\newcommand{\nd}{{\ensuremath d}} 

\newcommand{\beq}{\begin{eqnarray}}
\newcommand{\eeq}{\end{eqnarray}}
\newcommand{\beqq}{\begin{eqnarray*}}
\newcommand{\eeqq}{\end{eqnarray*}}
\newcommand{\benu}{\begin{enumerate}}
\newcommand{\eenu}{\end{enumerate}}

\newcommand{\bit}{\begin{itemize}}
\newcommand{\eit}{\end{itemize}}
\newcommand{\bli}{\begin{list}{}{\labelwidth6mm\leftmargin8mm}}
\newcommand{\eli}{\end{list}}
\newcommand{\bpr}{\begin{proof}}
\newcommand{\epr}{\end{proof}}

\newcommand{\R}{{\mathbb R}}
\newcommand{\real}{\R}
\newcommand{\Rn}{{\mathbb R}^{\nd}} 
\newcommand{\N}{\ensuremath{\mathbb N}}
\newcommand{\No}{\N_{0}}
\newcommand{\nat}{\N}
\newcommand{\no}{\No}
\newcommand{\Z}{\mathbb Z} 
\newcommand{\Zn}{\Z^{\nd}}
\newcommand{\C}{{\mathbb C}}
\newcommand{\ve}{\ensuremath\varepsilon}

\newcommand{\ls}{\lesssim}
\newcommand{\SRn}{\mathcal{S}(\Rn)}
\newcommand{\SpRn}{\mathcal{S}'(\Rn)}

\newcommand{\dint}{\;\mathrm{d}}
\newcommand{\A}{\ensuremath{A^s_{p,q}}}  
\newcommand{\be}{\ensuremath{B^{s_1}_{p_1,q_1}}}  
\newcommand{\bz}{\ensuremath{B^{s_2}_{p_2,q_2}}}

\newcommand{\Ae}{\ensuremath{A^{s_1}_{p_1,q_1}}}  
\newcommand{\Az}{\ensuremath{A^{s_2}_{p_2,q_2}}}
\newcommand{\bmo}{\mathrm{bmo}}
\newcommand{\Lloc}{\ensuremath{L_1^{\mathrm{loc}}}} 
\def\supp{\mathop{\rm supp}\nolimits}
\newcommand{\id}{\ensuremath{\operatorname{id}}}
\newcommand{\Id}{\ensuremath{\operatorname{Id}}}

\newcommand{\whole}[1]{\ensuremath\left\lfloor #1 \right\rfloor}
\newcommand{\M}{\ensuremath{{\mathcal M}_{u,p}}}  
\newcommand{\MA}{\ensuremath{\mathcal A}^{s}_{u,p,q}}  
\newcommand{\MB}{\ensuremath{{\mathcal N}^{s}_{u,p,q}}}  
\newcommand{\at}{{A}_{p,q}^{s,\tau}}
\newcommand{\ate}{{A}_{p_1,q_1}^{s_1,\tau_1}}
\newcommand{\atz}{{A}_{p_2,q_2}^{s_2,\tau_2}}

\newcommand{\fti}{{F}_{p_i,q_i}^{s_i,\tau_i}}

\newcommand{\bt}{{B}_{p,q}^{s,\tau}}
\newcommand{\ft}{{F}_{p,q}^{s,\tau}}
\newcommand{\B}{\ensuremath{B^s_{p,q}}}
\newcommand{\F}{\ensuremath{F^s_{p,q}}}
\newcommand{\cM}{\ensuremath{{\mathcal M}}}
\newcommand{\MBe}{\ensuremath{{\mathcal N}^{s_1}_{u_1,p_1,q_1}}}  
\newcommand{\MBz}{\ensuremath{{\mathcal N}^{s_2}_{u_2,p_2,q_2}}}  
\newcommand{\MAe}{\ensuremath{\mathcal A}^{s_1}_{u_1,p_1,q_1}}  
\newcommand{\MAz}{\ensuremath{{\mathcal A}^{s_2}_{u_2,p_2,q_2}}}  
\newcommand{\MF}{\ensuremath{{\mathcal E}^{s}_{u,p,q}}}  
\newcommand{\MFe}{\ensuremath{{\mathcal E}^{s_1}_{u_1,p_1,q_1}}}  
\newcommand{\MFz}{\ensuremath{{\mathcal E}^{s_2}_{u_2,p_2,q_2}}}  

\newcommand{\MFi}{\ensuremath{{\mathcal E}^{s_i}_{u_i,p_i,q_i}}}

\newcommand{\mbt}{\ensuremath{\widetilde{n}^{\sigma}_{u,p,q}}}  
\newcommand{\mbet}{\ensuremath{\widetilde{n}^{\sigma_1}_{u_1,p_1,q_1}}}  
\newcommand{\mbzt}{\ensuremath{\widetilde{n}^{\sigma_2}_{u_2,p_2,q_2}}}  
\newcommand{\mmb}{\ensuremath{m^{2^{j\nd}}_{u,p}}} 
\newcommand{\mmbet}{\ensuremath{{m}^{2^{j\nd}}_{u_1,p_1}}}  
\newcommand{\mmbzt}{\ensuremath{{m}^{2^{j\nd}}_{u_2,p_2}}}  
\newcommand{\mmbj}{\ensuremath{m^{(j)}_{u,p}}} 
\newcommand{\mmbjet}{\ensuremath{{m}^{(j)}_{u_1,p_1}}}  
\newcommand{\mmbjzt}{\ensuremath{{m}^{(j)}_{u_2,p_2}}}  


\newcommand{\vz}{\ensuremath{\varphi}}

\newcommand{\critical}{\ensuremath{\overline{\gamma}(\tau_1,\tau_2,p_1,p_2)}}
\newcommand{\vr}{\ensuremath{\varrho}}
\newcommand{\rhoA}{\ensuremath\vr\text{-} \!\A}
\newcommand{\rhoAe}{\ensuremath\vr\text{-} \!\Ae}
\newcommand{\rhoAz}{\ensuremath\vr\text{-} \!\Az}

\newcommand{\nn}[1]{\ensuremath \nu( #1)}

\allowdisplaybreaks[1]

\begin{document}

\title{Nuclear embeddings of Morrey sequence spaces \\ and smoothness Morrey spaces}

{\author{Dorothee D. Haroske, Leszek Skrzypczak}
 \date{ }

  \maketitle

\begin{flushright}
  \emph{Dedicated to David E.~Edmunds on the occasion of his 91st birthday}\\
  \emph{and to Hans Triebel on the occasion of his 86th birthday }
\end{flushright}

\smallskip~

  \begin{abstract}
    We study nuclear embeddings for spaces of Morrey type, both in its sequence space version and as smoothness spaces of functions defined on a bounded domain $\Omega\subset\Rn$. This covers, in particular, the meanwhile well-known and completely answered situation for spaces of Besov and Triebel-Lizorkin type defined on bounded domains which has been considered for a long time. The complete result was obtained only recently. Compact embeddings for function spaces of Morrey type have already been studied in detail, also concerning their entropy and approximation numbers. We now prove the first and complete nuclearity result in this context. The concept of nuclearity has  already been introduced by Grothendieck in 1955. Again we rely on suitable wavelet decomposition techniques and the famous Tong result (1969) which characterises nuclear diagonal operators acting between sequence spaces of $\ell_r$ type, $1\leq r\leq\infty$. 
\end{abstract}

\noindent{\em Keywords:} ~ nuclear embeddings, Morrey smoothness spaces, Besov and Triebel-Lizorkin type spaces, Morrey sequence spaces\\
  
\noindent{\bfseries MSC} (2010): 46E35, 47B10 \\

\section{Introduction}

Let $\Omega\subset\Rn$ be a bounded Lipschitz domain and $\MB(\Omega)$ and $\MF(\Omega)$ smoothness Morrey spaces, with $s_i\in\R$, $0<p_i\leq u_i<\infty$, or $p_i=u_i=\infty$, $0<q_i\leq\infty$, $i=1,2$. Roughly speaking, these spaces $\MB$ and $\MF$ are the counterparts of the well-known Besov and Triebel-Lizorkin function spaces $\B$ and $\F$, respectively, where the basic $L_p$ space in the latter scales are replaced by the {Morrey space}
  $\M$, $0<p\le u<\infty $: this is the set of all
  locally $p$-integrable functions $f\in L_p^{\mathrm{loc}}(\Rn)$  such that
\begin{equation}\label{i-Mo}
\|f \mid {\M(\Rn)}\| =\, \sup_{x\in \Rn, R>0} R^{\frac{d}{u}-\frac{d}{p}}
\left[\int_{B(x,R)} |f(y)|^p \dint y \right]^{\frac{1}{p}}\, <\, \infty\, .
\end{equation}
Consequently $\mathcal{M}_{p,p}=L_p$ and likewise $\B=\mathcal{N}^s_{p,p,q}$,  $\F=\mathcal{E}^s_{p,p,q}$. For the precise definition and further properties we refer to Section~\ref{prelim} below.

Motivated also by applications to PDE, smoothness Morrey spaces have been studied  intensely in the last years opening a wide field of possible applications. The Besov-Morrey spaces $\mathcal{N}^{s}_{u,p,q}(\Rn)$ were introduced  in \cite{KY} by Kozono and Yamazaki and used by them and Mazzucato \cite{Maz} to study Navier-Stokes equations. Corresponding Triebel-Lizorkin-Morrey spaces $\mathcal{E}^s_{u,p,q}(\Rn)$ were introduced in \cite{TX} by Tang and Xu, where the authors established the Morrey version of the Fefferman-Stein vector-valued inequality.

Another class of generalisations, the Besov-type space $\bt(\Rn)$ and the Triebel-Lizorkin-type space $\ft(\Rn)$ were introduced in \cite{ysy}. They  coincide with $\B$ and $\F$ when $\tau=0$. Their homogeneous versions were originally investigated by  El Baraka in \cite{ElBaraka1,ElBaraka2, ElBaraka3} and by Yuan and Yang \cite{yy1,yy2}. There are also some applications in partial differential equations for spaces of type $\bt(\Rn)$ and $\ft(\Rn)$, such as (fractional) Navier-Stokes equations, cf. \cite{lxy}.

Although the above scales are defined in different ways, they are closely connected and share a number of properties. Both approaches can be seen as examples of more general scales of Morrey smoothness spaces, we refer to our recent paper \cite{HT6} in this context. Further details, definitions, properties and references can be found in Section~\ref{prelim} below.
 
Parallel to the situation in Besov and Triebel-Lizorkin spaces,   characterisations are well-known, when embeddings between spaces in these scales of Morrey smoothness spaces on $\Rn$ are continuous. But there cannot exist a compact embedding, unless one imposes further assumptions, like weights, certain subspaces, or -- as in the case dealt with in the present paper -- one considers spaces defined on bounded domains. 

In our papers \cite{hs12b,hs14,HaSk-krakow,HaSk-morrey-comp,ghs20} we obtained a complete characterisation of the compactness of the embeddings
\begin{equation}\label{i-0}
\id_{\mathcal{N}}: \MBe(\Omega)\to \MBz(\Omega)\quad \text{and}\quad \id_{\mathcal{E}}:\MFe(\Omega)\to \MFz(\Omega),
\end{equation}
similarly for spaces of type $\bt(\Omega)$ and $\ft(\Omega)$, and we also studied the `degree' of that compactness in terms of entropy and approximation numbers. More precisely, assume that $s_i\in \R$,   $0<p_i\le u_i<\infty$ or $p_i=u_i=\infty$, $i=1,2$, and  $0<q_1,q_2\le \infty$. Then the  embeddings in \eqref{i-0} are compact if, and only if, 
\[\frac{s_1-s_2}{\nd} > \frac{1}{u_1}-\frac{1}{u_2}+\Big(\frac{1}{u_2}-\frac{1}{\max\{1,p_2/p_1\}u_1}\Big)_+ .\]

The main purpose of the present paper is to study the nuclearity of embeddings of type \eqref{i-0}. Grothendieck introduced the concept of nuclearity in \cite{grothendieck} more than 60 years ago. It provided the basis for many famous developments in functional analysis afterwards. Recall that Enflo used nuclearity in his famous solution \cite{enflo} of the approximation problem, a long-standing problem of Banach from the Scottish Book. We refer to \cite{Pie-snumb,Pie-op-2}, and, in particular, to \cite{pie-history} for further historic details.

Let $X,Y$ be Banach spaces, $T\in \mathcal{L}(X,Y)$ a linear and bounded operator. Then $T$ is called {\em nuclear}, denoted by $T\in\mathcal{N}(X,Y)$, 
if there exist elements $a_j\in X'$, the dual space of $X$, and $y_j\in Y$, $j\in\mathbb{N}$, such that $\sum_{j=1}^\infty \|a_j\|_{X'} \|y_j\|_Y < \infty$ 
and a nuclear representation $Tx=\sum_{j=1}^\infty a_j(x) y_j$ for any $x\in X$. Together with the {\em nuclear norm}
\[
 \nn{T}=\inf\Big\{ \sum_{j=1}^\infty   \|a_j\|_{X'} \|y_j\|_Y:\ T =\sum_{j=1}^\infty a_j(\cdot) y_j\Big\},
  \]
  where the infimum is taken over all nuclear representations of $T$, the space $\mathcal{N}(X,Y)$ becomes a Banach space. It is obvious that 
	nuclear operators are, in particular, compact.
	
  Already in the early years there was a strong interest to study examples of nuclear operators beyond diagonal operators in $\ell_p$ 
	sequence spaces, where a complete answer was obtained in \cite{tong}.
	Concentrating on embedding operators in spaces of Sobolev type, first results can be found, for instance, in \cite{PiTri,Pie-r-nuc}. We noticed an increased interest in studies of nuclearity in the last years.  Dealing with the Sobolev embedding for spaces on a bounded domain, some of the recent papers we 
have in mind are \cite{EL-4,EGL-3,Tri-nuclear,CoDoKu,CoEdKu} using quite different techniques however. 

There might be several reasons for this. For example, the problem to describe a compact operator outside the Hilbert space setting is a partly
 open and very important one. 
 It is well known from the remarkable Enflo result \cite{enflo} that there are compact operators
 between Banach spaces which cannot be approximated by finite-rank operators.
 This led to a number of -- meanwhile well-established  and famous -- methods to circumvent this difficulty and find alternative ways to `measure' the compactness or `degree' of compactness of an operator. It can be described by the asymptotic behaviour of its approximation or entropy numbers, 
which are basic tools for many different problems nowadays, e.g. eigenvalue distribution of compact operators in Banach spaces, 
optimal approximation of Sobolev-type embeddings, but also for numerical questions. In all these problems, the decomposition
of a given compact operator into a series is an essential proof technique. It turns out that in many of the recent contributions \cite{Tri-nuclear,CoDoKu,CoEdKu} studying nuclearity, a key tool in the arguments are new decomposition techniques as well, adapted to the different spaces. Inspired by the nice paper \cite{CoDoKu} we also used such arguments in our papers \cite{HaSk-nuc-weight,HaLeoSk}, and intend to follow this strategy here again.

As mentioned above, function spaces of Besov or Triebel-Lizorkin type, as well as their Morrey counterparts, defined on $\Rn$ never admit a compact, let alone nuclear embedding. But replacing $\Rn$ by a bounded Lipschitz domain $\Omega\subset\Rn$, then the question of nuclearity in the scale of Besov and Triebel-Lizorkin spaces has already been solved, cf. \cite{Pie-r-nuc} (with a forerunner in \cite{PiTri}) for the sufficient conditions, and 
 \cite{Tri-nuclear} with some forerunner in \cite{Pie-r-nuc} and partial results in \cite{EGL-3,EL-4} for the necessity of the conditions.  More precisely, for Besov spaces on bounded Lipschitz domains, $B^s_{p,q}(\Omega)$, it is well known that
\[
\id_\Omega^B : \be(\Omega) \hookrightarrow \bz(\Omega)\] 
\text{is nuclear if, and only if,} 
\[    s_1-s_2 > \nd-\nd \max\left(\frac{1}{p_2}-\frac{1}{p_1},0\right),
\]
where $1\leq p_i,q_i\leq \infty$, $s_i\in\real$, $i=1,2$. The counterpart for spaces of type $\MB(\Omega)$, reads now as follows, see Theorem~\ref{comp-NE} below: let  $s_i\in \R$, $1\leq q_i\leq\infty$, $1\leq p_i\leq u_i<\infty$, or $p_i=u_i=\infty$, $i=1,2$.  Then the embedding 
$$ \id_{\mathcal N}: \MBe(\Omega)\hookrightarrow \MBz(\Omega) $$
is nuclear if,  and only if,
$$
\frac{s_1-s_2}{\nd} > 
\frac{1}{u_1}-\frac{1}{u_2}+\frac{1}{\tn(u_1,\max(1,\frac{p_1}{p_2})u_2)},
$$
where $\tn(r_1,r_2)$ is defined via
$$ \frac{1}{\tn(r_1,r_2)} = \begin{cases}
    1, & \text{if}\ 1\leq r_2\leq r_1\leq \infty, \\
    1-\frac{1}{r_1}+\frac{1}{r_2}, & \text{if}\ 1\leq r_1\leq r_2\leq \infty.
  \end{cases}
  $$
  Clearly the two above-mentioned results coincide in case of $u_i=p_i$, $i=1,2$. We obtained parallel results in the context of spaces $\MF(\Omega)$, $\bt(\Omega)$ and $\ft(\Omega)$, see Theorems~\ref{comp-NE} and \ref{C-nuc-at} below.

As already indicated, we follow here the general ideas presented in \cite{CoDoKu} which use decomposition techniques and benefit from  Tong's result \cite{tong} about nuclear diagonal operators acting in sequence spaces of type $\ell_p$. For that reason we study first appropriate Morrey sequence spaces which are adapted to the wavelet decomposition of our Morrey function spaces. The nuclearity result in this sequence space setting can be found in Theorem~\ref{comp-s} below. Here we also rely on our earlier results in \cite{HaSk_Mo_seq}. 

Finally, the present paper can also be seen as an answer to a question by our friend and colleague, David E. Edmunds (University of Sussex at Brighton), who asked us some years ago what is known about nuclear embeddings in the setting of Morrey smoothness spaces. We did not know any result, but became sufficiently fascinated by the topic to study this question ourselves. This is the outcome.

The paper is organised as follows. In Section~\ref{prelim} we recall basic facts 
about the sequence and function spaces we shall work with, Section~\ref{sect-1} is devoted to the general concept of nuclear embeddings and some preceding results. In Section~\ref{nuc-morrey-seq} we concentrate on the Morrey sequence spaces with the main nuclearity result in Theorem~\ref{comp-s}, while Section~\ref{nuc-morrey-func} deals with the question of nuclear embeddings in Morrey smoothness spaces. Here the main findings are collected in Theorems~\ref{comp-NE} and \ref{C-nuc-at}. We conclude the paper with a number of examples.
        }


\section{Function spaces of Morrey type}~\label{prelim}
First we fix some notation. By $\N$ we denote the \emph{set of natural numbers},
by $\N_0$ the set $\N \cup \{0\}$,  and by $\Zn$ the \emph{set of all lattice points
	in $\Rn$ having integer components}.
For $a\in\real$, let   $\whole{a}:=\max\{k\in\Z: k\leq a\}$ and $a_+:=\max\{a,0\}$.
All unimportant positive constants will be denoted by $C$, occasionally with
subscripts. By the notation $A \ls B$, we mean that there exists a positive constant $C$ such that
$A \le C \,B$, whereas  the symbol $A \sim B$ stands for $A \ls B \ls A$.
We denote by $B(x,r) :=  \{y\in \Rn: |x-y|<r\}$ the ball centred at $x\in\Rn$ with radius $r>0$, and $|\cdot|$ denotes the Lebesgue measure when applied to measurable subsets of $\Rn$.

Given two (quasi-)Banach spaces $X$ and $Y$, we write $X\hookrightarrow Y$
if $X\subset Y$ and the natural embedding of $X$ into $Y$ is continuous.

\subsection{Smoothness spaces of Morrey type on $\Rn$}

Let $\SRn$ be the set of all \emph{Schwartz functions} on $\Rn$, endowed
with the usual topology,
and denote by $\SpRn$ its \emph{topological dual}, namely,
the space of all bounded linear functionals on $\SRn$
endowed with the weak $\ast$-topology.
For all $f\in \SRn$ or $\SpRn$, we
use $\widehat{f}$ to denote its \emph{Fourier transform}, and $f^\vee$ for its inverse.
Let $\mathcal{Q}$ be the collection of all \emph{dyadic cubes} in $\Rn$, namely,
$
\mathcal{Q}:= \{Q_{j,k}:= 2^{-j}([0,1)^\nd+k):\ j\in\Z,\ k\in\Zn\}.
$
The {symbol}  $\ell(Q)$ denotes
the side-length of the cube $Q$ and $j_Q:=-\log_2\ell(Q)$.\\

Let $\vz_0,$ $\vz\in\SRn$. We say that $(\vz_0, \vz)$ is an \textit{admissible pair} if
\begin{equation}\label{e1.0}
\supp \widehat{\vz_0}\subset \{\xi\in\Rn:\,|\xi|\le2\}\, , \qquad
|\widehat{\vz_0}(\xi)|\ge C\ \text{if}\ |\xi|\le 5/3,
\end{equation}
and
\begin{equation}\label{e1.1}
\supp \widehat{\vz}\subset \{\xi\in\Rn: 1/2\le|\xi|\le2\}\quad\text{and}\quad
|\widehat{\vz}(\xi)|\ge C\ \text{if}\  3/5\le|\xi|\le 5/3,
\end{equation}
where $C$ is a positive constant.
In what follows, for all $\vz\in\SRn$ and $j\in\N$, $\vz_j(\cdot):=2^{j\nd}\vz(2^j\cdot)$.\\

\begin{definition}\label{d1}
Let $s\in\real$, $\tau\in[0,\infty)$, $q \in(0,\infty]$ and $(\vz_0,\vz)$ be an admissible pair. 
\bli
\item[{\bfseries\upshape (i)}]
Let $p\in(0,\infty]$. The \emph{Besov-type space} $\bt(\Rn)$ is defined to be the collection of all $f\in\SpRn$ such that

$$\|f \mid {\bt(\Rn)}\|:=
\sup_{P\in\mathcal{Q}}\frac1{|P|^{\tau}}\left\{\sum_{j=\max\{j_P,0\}}^\infty\!\!
2^{js q}\left[\int\limits_P
|\vz_j\ast f(x)|^p\dint x\right]^{\frac{q}{p}}\right\}^{\frac1q}<\infty$$

with the usual modifications made in case of $p=\infty$ and/or $q=\infty$.
\item[{\bfseries\upshape (ii)}]
Let $p\in(0,\infty)$. The \emph{Triebel-Lizorkin-type space} $\ft(\Rn)$ is defined to be the collection of all $f\in \SpRn$ such that
$$\|f \mid {\ft(\Rn)}\|:=
\sup_{P\in\mathcal{Q}}\frac1{|P|^{\tau}}\left\{\int\limits_P\left[\sum_{j=\max\{j_P,0\}}^\infty\!\!
2^{js q}
|\vz_j\ast f(x)|^q\right]^{\frac{p}{q}}\dint x\right\}^{\frac1p}<\infty$$
with the usual modification made in case of $q=\infty$.
\eli
\end{definition}

\begin{remark}\label{Rem-Ftau}
	These spaces were introduced in \cite{ysy} and proved therein to be quasi-Banach spaces. In the Banach case  the scale of Nikol'skij-Besov type spaces ${\bt(\Rn)}$ had already been introduced and investigated in \cite{ElBaraka1,ElBaraka2, ElBaraka3}.  It is easy to see that, when $\tau=0$, then $\bt(\Rn)$ and $\ft(\Rn)$
	coincide with the classical
	Besov space $\B(\Rn)$ and Triebel-Lizorkin space $\F(\Rn)$,
	respectively.  There exists extensive literature on such spaces; we
	refer, in particular, to the series of monographs \cite{T-F1,T-F2,t06} for a	comprehensive treatment. In case of $\tau<0$ the spaces are not very interesting, $\bt(\Rn)=\ft(\Rn)=\{0\}$, $\tau<0$.
\end{remark}

\noindent{\em Convention.}~We adopt the nowadays usual custom to write $\A(\Rn)$ instead of $\B(\Rn)$ or $\F(\Rn)$, and $\at(\Rn)$ instead of $\bt(\Rn)$ or $\ft(\Rn)$, respectively, when both scales of spaces are meant simultaneously in some context.

We have elementary embeddings within this scale of spaces (see \cite[Proposition~2.1]{ysy}),
\begin{equation} \label{elem-0-t}
A^{s+\ve,\tau}_{p,r}(\Rn) \hookrightarrow \at(\Rn) \qquad\text{if}\quad \varepsilon\in(0,\infty), \quad r,\,q\in(0,\infty],
\end{equation}
and
\begin{equation} \label{elem-1-t}
{A}^{s,\tau}_{p,q_1}(\Rn)  \hookrightarrow {A}^{s,\tau}_{p,q_2}(\Rn)\quad\text{if} \quad q_1\le q_2,
\end{equation}
as well as
\begin{equation}\label{elem-tau}
B^{s,\tau}_{p,\min\{p,q\}}(\Rn)\, \hookrightarrow \, \ft(\Rn)\, \hookrightarrow \, B^{s,\tau}_{p,\max\{p,q\}}(\Rn),
\end{equation}
which directly extends the well-known classical case from $\tau=0$ to $\tau\in [0,\infty)$, $p\in(0,\infty)$, $q\in(0,\infty]$ and $s\in\real$.

It is also known from \cite[Proposition 2.6]{ysy}
that 
\begin{equation} \label{010319}
A^{s,\tau}_{p,q}(\Rn) \hookrightarrow B^{s+\nd(\tau-\frac1p)}_{\infty,\infty}(\Rn). 
\end{equation} 
The following  remarkable feature was proved in \cite{yy02}.
\begin{proposition}\label{yy02}
	Let $s\in\real$, $\tau\in[0,\infty)$  and $p,\,q\in(0,\infty]$ (with $p<\infty$ in the $F$-case). If
	either $\tau>\frac1p$ or $\tau=\frac1p$ and $q=\infty$, then $A^{s,\tau}_{p,q}(\Rn) = B^{s+\nd(\tau-\frac1p)}_{\infty,\infty}(\Rn)$.
\end{proposition}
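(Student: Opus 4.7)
The forward inclusion $A^{s,\tau}_{p,q}(\Rn)\hookrightarrow B^{\sigma}_{\infty,\infty}(\Rn)$ with $\sigma=s+\nd(\tau-\tfrac1p)$ is already supplied for free by \eqref{010319}, so the whole content of the proposition is the reverse embedding. My plan is to obtain it by the crudest available estimate on the convolution blocks, pushing the spatial integral inside the supremum and letting the geometry of the dyadic cubes absorb the factor $|P|^\tau$.

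Concretely, I would start from an $f\in B^{\sigma}_{\infty,\infty}(\Rn)$ and from the trivial pointwise bound
$$
\|\vz_j\ast f\|_{L_\infty(\Rn)}\le 2^{-j\sigma}\,\|f\mid B^{\sigma}_{\infty,\infty}(\Rn)\|,\qquad j\in\N_0.
$$
Substituted into the Besov-type norm of Definition~\ref{d1}(i) this gives, for any dyadic cube $P\in\mathcal{Q}$,
$$
\Big[\int_P|\vz_j\ast f(x)|^p\dint x\Big]^{1/p}\le |P|^{1/p}\,2^{-j\sigma}\,\|f\mid B^{\sigma}_{\infty,\infty}\|,
$$
so that
$$
\frac1{|P|^\tau}\Big\{\sum_{j\ge\max(j_P,0)}2^{jsq}\Big[\int_P|\vz_j\ast f|^p\Big]^{q/p}\Big\}^{1/q}
\le 2^{j_P\nd(\tau-1/p)}\,\|f\mid B^\sigma_{\infty,\infty}\|\Big\{\sum_{j\ge\max(j_P,0)}2^{-jq\nd(\tau-1/p)}\Big\}^{1/q},
$$
since $s-\sigma=-\nd(\tau-\tfrac1p)$ and $|P|=2^{-j_P\nd}$. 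In the case $\tau>1/p$ the inner geometric series converges and is comparable to $2^{-\max(j_P,0)q\nd(\tau-1/p)}$, so splitting according to $j_P\ge0$ or $j_P<0$ one checks at once that the prefactor $2^{j_P\nd(\tau-1/p)}$ is cancelled or dominated. In the borderline case $\tau=1/p$, $q=\infty$ the geometric weight vanishes entirely and the supremum equals $1$. Taking the supremum over $P\in\mathcal{Q}$ then yields $\|f\mid\bt\|\lesssim\|f\mid B^\sigma_{\infty,\infty}\|$, which proves the reverse embedding in the Besov case.

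The Triebel--Lizorkin case is handled by exactly the same pointwise estimate: one uses $|\vz_j\ast f(x)|\le 2^{-j\sigma}\|f\mid B^\sigma_{\infty,\infty}\|$ inside the $\ell_q$-sum to obtain
$$
\Big[\sum_{j\ge\max(j_P,0)}2^{jsq}|\vz_j\ast f(x)|^q\Big]^{p/q}\lesssim 2^{-\max(j_P,0)\,p\,\nd(\tau-1/p)}\|f\mid B^\sigma_{\infty,\infty}\|^p
$$
(with the obvious reading when $q=\infty$), and the remaining integral over $P$ merely contributes a factor $|P|^{1/p}$, which is absorbed in the same way as before. I expect the main --- essentially the only --- obstacle to be bookkeeping: one must verify the computation uniformly in the two regimes $j_P\ge0$ and $j_P<0$, treat the borderline exponent $\tau=1/p$ with $q=\infty$ separately so that a divergent geometric series is replaced by a trivial supremum, and handle the endpoint $p=\infty$ (where $|P|^{1/p}$ is read as the essential supremum) in the $B$-case. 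Once these case distinctions are carried out, the two continuous embeddings together establish the equality $A^{s,\tau}_{p,q}(\Rn)=B^{s+\nd(\tau-1/p)}_{\infty,\infty}(\Rn)$.
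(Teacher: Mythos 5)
Your argument is correct, and it is worth noting at the outset that the paper itself offers no proof of this proposition: it is quoted verbatim from Yang--Yuan \cite{yy02}, so there is no internal argument to compare against. Your reduction is the right one --- the embedding $A^{s,\tau}_{p,q}(\Rn)\hookrightarrow B^{\sigma}_{\infty,\infty}(\Rn)$ with $\sigma=s+\nd(\tau-\frac1p)$ is exactly \eqref{010319}, valid for all $\tau\ge 0$, so only the converse needs work --- and your converse computation checks out in every regime. The key cancellation is $|P|^{\frac1p-\tau}=2^{j_P\nd(\tau-\frac1p)}$ against the first term $2^{-\max(j_P,0)\nd(\tau-\frac1p)}$ of the convergent geometric series (for $\tau>\frac1p$ the series over $j\ge\max(j_P,0)$ of $2^{-jq\nd(\tau-\frac1p)}$ is indeed comparable to its first term, with a constant depending only on $q\nd(\tau-\frac1p)$): the product equals a constant when $j_P\ge0$ and is $\le C\,2^{j_P\nd(\tau-\frac1p)}\le C$ when $j_P<0$. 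The borderline case $\tau=\frac1p$, $q=\infty$ degenerates to a supremum of $1$'s with prefactor $|P|^{0}$, as you say, and the $F$-case follows from the same pointwise bound since the integrand becomes constant on $P$ after the $\ell_q$-sum is estimated. The endpoint $p=\infty$ (so $\tau\ge\frac1p=0$, $B$-case only) is covered by reading $\bigl[\int_P\cdot\bigr]^{1/p}$ as an essential supremum, which only improves the estimate. The one cosmetic caveat is that the implicit constant blows up as $\tau\downarrow\frac1p$ (the geometric ratio tends to $1$), but the proposition fixes $\tau$, so this is harmless. In short: a complete, elementary, self-contained proof of a result the paper merely imports.
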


Now we come to smoothness  spaces of Morrey type   $\MB(\Rn)$ and $\MF(\Rn)$. The \emph{Morrey space}
$\M(\Rn)$, $0<p\le u<\infty $, is defined to be the set of all
locally $p$-integrable functions $f\in L_p^{\mathrm{loc}}(\Rn)$  such that
$$
\|f \mid {\M(\Rn)}\| :=\, \sup_{x\in \Rn, R>0} R^{\frac{d}{u}-\frac{d}{p}}
\left[\int_{B(x,R)} |f(y)|^p \dint y \right]^{\frac{1}{p}}\, <\, \infty\, .
$$

\begin{remark}
	The spaces $\M(\Rn)$ are quasi-Banach spaces (Banach spaces for $p \ge 1$).
	They originated from Morrey's study on PDE (see \cite{Mor}) and are part of the wider class of Morrey-Campanato spaces; cf. \cite{Pee}. They can be considered as a complement to $L_p$ spaces. As a matter of fact, $\cM_{p,p}(\Rn) = L_p(\Rn)$ with $p\in(0,\infty)$.
	To extend this relation, we put  $\cM_{\infty,\infty}(\Rn)  = L_\infty(\Rn)$. One can easily see that $\M(\Rn)=\{0\}$ for $u<p$, and that for  $0<p_2 \le p_1 \le u < \infty$,
	\begin{equation} \label{LinM}
	L_u(\Rn)= \cM_{u,u}(\Rn) \hookrightarrow  \cM_{u,p_1}(\Rn)\hookrightarrow  \cM_{u,p_2}(\Rn).
	\end{equation}
	In an analogous way, one can define the spaces $\cM_{\infty,p}(\Rn)$, $p\in(0, \infty)$, but using the Lebesgue differentiation theorem, one can easily prove  that
	$\cM_{\infty, p}(\Rn) = L_\infty(\Rn)$.
We refer to the recent monographs \cite{FHS-MS-1,FHS-MS-2} for a detailed treatment and, in particular, application of the concept in the study of PDE.
\end{remark}

\begin{definition}\label{d2.5}
	Let $0 <p\leq  u<\infty$ or $p=u=\infty$. Let  $q\in(0,\infty]$, $s\in \real$ and $\vz_0$, $\vz\in\SRn$
	be as in \eqref{e1.0} and \eqref{e1.1}, respectively.
	\bli
	\item[{\upshape\bfseries (i)}]
	The  {\em Besov-Morrey   space}
	$\MB(\Rn)$ is defined to be the set of all distributions $f\in \SpRn$ such that
	\begin{align}\label{BM}
	\big\|f\mid \MB(\Rn)\big\|=
	\bigg[\sum_{j=0}^{\infty}2^{jsq}\big\| \varphi_j \ast f\mid
	\M(\Rn)\big\|^q \bigg]^{1/q} < \infty
	\end{align}
	with the usual modification made in case of $q=\infty$.
	\item[{\upshape\bfseries  (ii)}]
	Let $u\in(0,\infty)$. The  {\em Triebel-Lizorkin-Morrey  space} $\MF(\Rn)$
	is defined to be the set of all distributions $f\in   \SpRn$ such that
	\begin{align}\label{FM}
	\big\|f \mid \MF(\Rn)\big\|=\bigg\|\bigg[\sum_{j=0}^{\infty}2^{jsq} |
	(\varphi_j\ast f)(\cdot)|^q\bigg]^{1/q}
	\mid \M(\Rn)\bigg\| <\infty
	\end{align}
	with the usual modification made in case of  $q=\infty$.
	\eli
\end{definition}

\noindent{\em Convention.} Again we adopt the usual custom to write $\MA$ instead of $\MB$ or $\MF$, when both scales of spaces are meant simultaneously in some context. 

\begin{remark}
	The  spaces $\MA(\Rn)$ are
	independent of the particular choices of $\vz_0$, $\vz$ appearing in their definitions.
	They are quasi-Banach spaces
	(Banach spaces for $p,\,q\geq 1$), and $\mathcal{S}(\Rn) \hookrightarrow
	\MA(\Rn)\hookrightarrow \mathcal{S}'(\Rn)$.  Moreover, for $u=p$
	we re-obtain the usual  Besov and Triebel-Lizorkin spaces,
	\begin{equation}
	{\mathcal A}^{s}_{p,p,q}(\Rn) = \A(\Rn) = A^{s,0}_{p,q}(\Rn). \label{MB=B}
	\end{equation}
	Besov-Morrey spaces were introduced by Kozono and Yamazaki in
	\cite{KY}. They studied semi-linear heat equations and Navier-Stokes
	equations with initial data belonging to  Besov-Morrey spaces.  The
	investigations were continued by Mazzucato \cite{Maz}, where one can find the
	atomic decomposition of some spaces. The Triebel-Lizorkin-Morrey spaces
	were later introduced by  Tang and Xu \cite{TX}. We follow the
	ideas of Tang and Xu \cite{TX}, where a somewhat  different definition is proposed. The ideas were further developed by Sawano and Tanaka \cite{ST1,ST2,Saw1,Saw2}. The most systematic and general approach to the spaces of this type  can  be found in the book \cite{ysy} or in the survey papers by Sickel \cite{s011,s011a}.  
\end{remark}

It turned out that many of the results from the classical situation have their  counterparts for the spaces $\mathcal{A}^s_{u,p,q}(\Rn)$, e.\,g.,
\begin{equation} \label{elem-0}
{\mathcal A}^{s+\varepsilon}_{u,p,r}(\Rn)  \hookrightarrow
\MA(\Rn)\qquad\text{if}\quad \varepsilon>0, \quad r\in(0,\infty],
\end{equation}
and ${\mathcal A}^{s}_{u,p,q_1}(\Rn)  \hookrightarrow {\mathcal A}^{s}_{u,p,q_2}(\Rn)$ if $q_1\le q_2$. However, there also exist some differences.
Sawano proved in \cite{Saw2} that, for $s\in\real$ and $0<p< u<\infty$,
\begin{equation}\label{elem}
{\mathcal N}^s_{u,p,\min\{p,q\}}(\Rn)\, \hookrightarrow \, \MF(\Rn)\, \hookrightarrow \,{\mathcal N}^s_{u,p,\infty}(\Rn),
\end{equation}
where, for the latter embedding, $r=\infty$ cannot be improved -- unlike
in case of $u=p$ (see \eqref{elem-tau} with $\tau=0$). More precisely,
\[
\MF(\Rn)\hookrightarrow {\mathcal N}^s_{u,p,r}(\Rn)\quad\text{if, and only if,}\quad r=\infty ~~ \text{or} ~~  u=p\ \text{and}\ r\ge \max\{p,\,q\}.
\]
On the other hand, Mazzucato has shown in \cite[Proposition~4.1]{Maz} that
\[
\mathcal{E}^0_{u,p,2}(\Rn)=\M(\Rn),\quad 1<p\leq u<\infty,
\]
in particular,
\begin{equation}\label{E-Lp}
\mathcal{E}^0_{p,p,2}(\Rn)=L_p(\Rn)=F^0_{p,2}(\Rn),\quad p\in(1,\infty).
\end{equation}

\begin{remark}\label{N-Bt-spaces}
	Let $s$, $u$, $p$ and $q$ be as in Definition~\ref{d2.5}
	and $\tau\in[0,\infty)$.
	It is known in \cite[Corollary~3.3, p. 64]{ysy} that
	\begin{equation}
	\MB(\Rn) \hookrightarrow  \bt(\Rn) \qquad \text{with}\qquad \tau={1}/{p}- {1}/{u}.
	\label{N-BT-emb}
	\end{equation}
	Moreover, the above embedding is proper if $\tau>0$ and $q<\infty$. If $\tau=0$ or $q=\infty$, then both spaces coincide with each other, in particular,
	\begin{equation}
	\mathcal{N}^{s}_{u,p,\infty}(\Rn)  =  B^{s,\frac{1}{p}- \frac{1}{u}}_{p,\infty}(\Rn).
	\label{N-BT-equal}
	\end{equation}
	As for the $F$-spaces, if $0\le \tau <{1}/{p}$,
	then
	\begin{equation}\label{fte}
	\ft(\Rn)\, = \, \MF(\Rn)\quad\text{with }\quad \tau =
	{1}/{p}-{1}/{u}\, ,\quad 0 < p\le u < \infty\, ;
	\end{equation}
	cf. \cite[Corollary 3.3, p.\,63]{ysy}. Moreover, if $p\in(0,\infty)$ and $q\in(0,\infty]$, then
	\begin{equation}\label{ftbt}
	F^{s,\, \frac{1}{p} }_{p\, ,\,q}(\Rn) \, = \, F^{s}_{\infty,\,q}(\Rn)\, = \, B^{s,\, \frac1q }_{q\, ,\,q}(\Rn) \, ;
	\end{equation}
	cf. \cite[Propositions 3.4 and 3.5]{s011} and \cite[Remark 10]{s011a}.
\end{remark}

\begin{remark}\label{bmo-def}
Recall that the space $\bmo(\Rn)$ is covered by the above scale. More precisely, consider the local (non-homogeneous) space of functions of bounded mean oscillation, $\bmo(\Rn)$, consisting of all locally integrable
functions $\ f\in \Lloc(\Rn) $ satisfying that
\begin{equation*}
 \left\| f \right\|_{\bmo}:=
\sup_{|Q|\leq 1}\; \frac{1}{|Q|} \int\limits_Q |f(x)-f_Q| \dint x + \sup_{|Q|>
1}\; \frac{1}{|Q|} \int\limits_Q |f(x)| \dint x<\infty,
\end{equation*}
where $ Q $ appearing in the above definition runs over all cubes in $\Rn$, and $ f_Q $ denotes the mean value of $ f $ with
respect to $ Q$, namely, $ f_Q := \frac{1}{|Q|} \;\int_Q f(x)\dint x$,
cf. \cite[2.2.2(viii)]{T-F1}. The space $\bmo(\Rn)$ coincides with $F^{0}_{\infty, 2}(\Rn)$,  cf. \cite[Thm.~2.5.8/2]{T-F1}.  
Hence the above result \eqref{ftbt} implies, in particular,
\begin{equation}\label{ft=bmo}
\bmo(\Rn)= F^{0}_{\infty,2}(\Rn)= F^{0, 1/p}_{p, 2}(\Rn)= {B^{0, 1/2}_{2, 2}(\Rn)}, \quad 0<p<\infty,
\end{equation}
cf. \cite[Propositions~3.4 and 3.5]{s011}.
\end{remark}

\begin{remark}\label{T-hybrid}
In contrast to this approach, Triebel followed the original Morrey-Campanato ideas to develop local spaces $\mathcal{L}^r\A(\Rn)$ in \cite{t13}, and so-called `hybrid' spaces $L^r\A(\Rn)$ in \cite{t14}, where $0<p<\infty$, $0<q\leq\infty$, $s\in\real$, and $-\frac{\nd}{p}\leq r<\infty$. This construction is based on wavelet decompositions and also combines local and global elements as in Definitions~\ref{d1} and \ref{d2.5}. However, Triebel proved in \cite[Thm.~3.38]{t14} that
\begin{equation} \label{hybrid=tau}
L^r\A(\Rn) = \at(\Rn), \qquad \tau=\frac1p+\frac{r}{\nd},
\end{equation}
in all admitted cases. We return to this coincidence below.
\end{remark}

\begin{remark}\label{rem-rho-A}
  The most recent approach to Morrey smoothness spaces can be found in \cite{HT6}: for $-\nd \leq \vr < 0$, $0<p<\infty$, $0<q\leq\infty$, spaces of type $\Lambda^\vr\A(\Rn)$ and $\Lambda_\vr\A(\Rn)$ were introduced there, which satisfy that
  \[
\Lambda_\vr\A(\Rn) = \begin{cases}
  \A(\Rn), & \vr=-\nd, \\  \MA(\Rn), & -\nd\leq \vr<0, \ u\vr + \nd p=0, \end{cases}
        \]
  and 
  \[
    \Lambda^\vr\A(\Rn) = \begin{cases}
\A(\Rn), & \vr=-\nd, \\  \at(\Rn), & \vr\geq -\nd, \ \tau = \frac1p\left(1+\frac{\vr}{\nd}\right). \end{cases}
  \]
  In case of $\vr<-\nd$ one would obtain $\Lambda_\vr\A(\Rn) = \Lambda^\vr\A(\Rn) = \{0\}$, while for $\vr\geq 0$ extensions in case of $\Lambda^\vr\A(\Rn) $ are possible, cf. \cite{HT6}. Moreover, $\Lambda_\vr\F(\Rn)= \Lambda^\vr\F(\Rn)$, $-\nd\leq \vr<0$, whereas 
  $\Lambda_\vr\B(\Rn)\subsetneq \Lambda^\vr\B(\Rn)$ unless $q=\infty$. As many interesting properties of the spaces $\Lambda_\vr\A(\Rn)$ and $\Lambda^\vr\A(\Rn)$ are similar for the same parameter $\vr$, the authors introduced in \cite{HT6} so called $\vr$-clans $\rhoA(\Rn)$ of spaces, $-\nd<\vr<0$, which share such important features. We shall return to this generalisation and concept below. 
\end{remark}

\subsection{Wavelet decomposition}\label{sect-2-2}
\newcommand{\msib}{\ensuremath{n^{\sigma}_{u,p,q}}}  
\newcommand{\msibe}{\ensuremath{n^{\sigma_1}_{u_1,p_1,q_1}}}  
\newcommand{\msibz}{\ensuremath{n^{\sigma_2}_{u_2,p_2,q_2}}} 

We briefly recall the wavelet characterisation of Besov-Morrey  spaces  proved in \cite{Saw2}. It will be essential in our approach.  For $m\in \Zn$ and $\nu \in \Z$ we define  a $\nd$-dimensional dyadic cube   with sides parallel to the axes of coordinates by $ Q_{\nu,m} = \prod_{i=1}^\nd  \left.\left[ \frac{m_i}{2^\nu},\frac{m_i+1}{2^\nu}\right.\right)$, $\nu\in\Z$,  $m=(m_1,\ldots , m_\nd) \in \Zn$. 
For $0<u<\infty$, $\nu \in \Z$ and $m\in\Zn$ we denote by $\chi_{\nu, m}^{(u)}$ the $u$-normalised characteristic function of  the cube $Q_{\nu, m}$, 
$\ \chi_{\nu, m}^{(u)} = 2^{{\nu \nd}/{u}}  \chi_{Q_{\nu,m}}$, 
hence $\| \chi_{\nu, m}^{(u)}|L_p\|=1$ and $\| \chi_{\nu, m}^{(u)}|\M\|=1$. 

Let $\widetilde{\phi}$ be a scaling function  on $\R$ with compact support and of sufficiently high regularity.
Let $\widetilde{\psi}$ be an associated wavelet. Then the  tensor-product ansatz yields a scaling function $\phi$  and associated wavelets
$\psi_1, \ldots, \psi_{2^{d}-1}$, all defined now on $\Rn$.  We suppose $\widetilde{\phi} \in C^{N_1}(\R)$ and $\supp \widetilde{\phi}
\subset [-N_2,\, N_2]$ for certain natural numbers $N_1$ and $N_2$. This implies
\begin{equation}\label{2-1-2}
	\phi, \, \psi_i \in C^{N_1}(\Rn) \quad \text{and} \quad 
	\supp \phi ,\, \supp \psi_i \subset [-N_3,\, N_3]^\nd , 
\end{equation}
for $i=1, \ldots \, , 2^{\nd}-1$. We use the standard abbreviations 
\begin{equation}
	\phi_{\nu,m}(x) =  2^{\nu \nd/2} \, \phi(2^\nu x-m) \quad
	\text{and}\quad
	\psi_{i,\nu,m}(x) =  2^{\nu \nd/2} \, \psi_i(2^\nu x-m) . 
\end{equation}

To formulate the result we  introduce some sequence  spaces. For $0<p\le u<\infty$, or $p=u=\infty$, $0< q \leq\infty$ and  $\sigma\in \R$, let
\begin{multline}
	n^{\sigma}_{u,p,q}  :=   \Bigg\{ \lambda = 
	\{\lambda_{\nu,m}\}_{\nu,m} : \     \lambda_{\nu,m} \in \C\, , 
	\\
	\| \, \lambda \, |n^{\sigma}_{u,p,q}\| = \Big\| 
	\Big\{2^{\nu(\sigma-\frac{\nd}{u}) }\,  \Big\|\sum_{m \in
		\Zn}\lambda_{\nu,m}\, \chi^{(u)}_{\nu,m}| \M 
	\Big\|\Big\}_{\nu\in\no} | \ell_q\Big\| < \infty \Bigg\}\, . 
	\label{mbspqr}
\end{multline}
However, in many  situations the  following equivalent norm in  the space $	n^{\sigma}_{u,p,q}$ is more useful
	\beq \nonumber
\|\lambda|\msib\|^\ast = \Big(\sum_{j=0}^\infty 2^{qj(\sigma-\frac \nd u)}\!\!\!\sup_{\nu: \nu \le j; k\in \Zn}\!\! 2^{q\nd (j-\nu)(\frac 1 u - \frac 1 p )}\big(\!\sum_{m:Q_{j,m}\subset Q_{\nu,k}}\!\!\!\!|\lambda_{j,m}|^p\big)^{\frac q p}\Big)^{\frac 1 q},
\!\!\,
\label{3-0}
\eeq
cf. \cite{HaSk-bm1}. 
The following theorem was proved in \cite{Saw2}. 

\begin{theorem}\label{wavemorrey}  
	Let $0 < p\le u < \infty$ or $u=p=\infty$, $0<q\le \infty$ and let $s\in \R$. 
	Let $\phi$ be a scaling function and let $\psi_i$,  $i=1, \ldots ,2^\nd -1$, be
	the corresponding wavelets satisfying \eqref{2-1-2}. We assume that {$ \max\left\{ (1+\whole{s})_+, \whole{\nd(\frac 1 p -1)_+ -s} \right\}  \le N_1$}. Then a distribution $f \in \SpRn$ belongs to $\MB(\Rn)$,
	if, and only if, 
	\begin{align*}
		\| \, f \, |\MB(\Rn)\|^\star  = \ &  
		\Big\| \left\{\langle f,\phi_{0,m}\rangle \right\}_{m\in \Z^\nd } |
		\ell_u\Big\| 
		+ \sum_{i=1}^{2^\nd -1}
		\Big\| \left\{\langle f,\psi_{i,\nu,m}\rangle \right\}_{\nu\in \N_0, m\in \Z^\nd } | \msib \Big\|
	\end{align*}
	is finite, where $\sigma=s+\frac \nd 2 $. Furthermore,
	$\| \, f \, |\MB(\Rn) \|^\star $ may be used as an 
	equivalent $($quasi-$)$ norm in 
	$\MB(\Rn)$.
\end{theorem}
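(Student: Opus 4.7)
The plan is to prove both directions of the equivalence by passing through an atomic (or molecular) decomposition of $\MB(\Rn)$. By Sawano's atomic decomposition theorem for Besov--Morrey spaces, membership in $\MB(\Rn)$ is characterised through convergent series $\sum_{\nu,m} \lambda_{\nu,m} a_{\nu,m}$, where each $a_{\nu,m}$ is an $(s,p)$-atom supported in a fixed multiple of $Q_{\nu,m}$ with appropriate smoothness and, when $s$ is sufficiently small relative to $p$, vanishing-moment conditions, and where the coefficient sequence $\lambda$ belongs precisely to $\msib$ with $\sigma=s+\nd/2$. The crucial observation is that, under the hypothesis $\max\{(1+\whole{s})_+, \whole{\nd(1/p-1)_+-s}\} \le N_1$, the renormalised wavelets $\psi_{i,\nu,m}$ built from \eqref{2-1-2} qualify as such atoms: the first bound on $N_1$ supplies the required classical smoothness, while the second supplies the cancellation $\int x^\alpha \psi_i(x)\dint x = 0$ for $|\alpha|\le N_1$.

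For the \emph{sufficiency} direction, I would assume the right-hand side is finite and split $f$ into a low- and a high-frequency piece. The scaling-function part $\sum_m \langle f,\phi_{0,m}\rangle \phi_{0,m}$ is handled directly, exploiting the bounded overlap of the supports of $\phi_{0,m}$ together with $\ell_u$-summability to get a contribution in $\M(\Rn)$, which embeds into $\MB(\Rn)$ via \eqref{elem-0} after a harmless loss in the smoothness parameter. The high-frequency piece $\sum_{i=1}^{2^\nd-1}\sum_{\nu\ge 0, m} \langle f,\psi_{i,\nu,m}\rangle \psi_{i,\nu,m}$ then converges in $\SpRn$ to an element of $\MB(\Rn)$ by the atomic decomposition theorem, with $\MB$-norm controlled by the $\msib$-norm of the coefficient sequence.

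For the \emph{necessity} direction, I would invoke a local-means characterisation of $\MB(\Rn)$: with a compactly supported kernel $k_i$ having enough moments, the wavelet coefficient $\langle f,\psi_{i,\nu,m}\rangle$ is pointwise dominated by $(k_i^{(\nu)}\ast f)(2^{-\nu}m)$, and hence by the Peetre maximal function of $f$ at scale $2^{-\nu}$. Feeding this into the $\msib$-norm and applying the Morrey-type maximal inequality of Tang--Xu and Sawano converts the control into an $\MB(\Rn)$-norm bound, handling the $\ell_q$ summation in $\nu$ in the last step.

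The main obstacle is that $\|\cdot|\M(\Rn)\|$ involves a supremum over all balls, so scales cannot be decoupled as cleanly as in the classical Besov case where the outer $L_p$ is fixed. This is exactly where the equivalent formulation $\|\lambda|\msib\|^\ast$, with its inner supremum over dyadic cubes $Q_{\nu,k}$ containing $Q_{j,m}$, becomes indispensable: it encodes the Morrey averaging at every pair of scales $(\nu,j)$ and matches, level by level, with the atomic estimates, thereby absorbing the non-local character of the Morrey norm into the sequence-space framework and letting the classical atomic machinery run through.
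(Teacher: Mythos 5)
First, a point of reference: the paper contains no proof of Theorem~\ref{wavemorrey} at all --- it is quoted from Sawano \cite{Saw2} --- so the only comparison possible is with the strategy of that reference. Your overall architecture for the high-frequency part (atomic decomposition for the sufficiency; local means, Peetre maximal functions and the Morrey version of the Fefferman--Stein inequality for the necessity; the hypothesis on $N_1$ guaranteeing that the $\psi_{i,\nu,m}$ have enough smoothness and cancellation to serve as atoms) is indeed the standard route and matches the cited proof. One imprecision there: the vanishing moments are not ``supplied by'' the bound $\whole{\nd(\frac1p-1)_+-s}\le N_1$; a compactly supported $C^{N_1}$ wavelet has moments of that order automatically by construction, and the bound only ensures that this number of moments suffices for $(s,p)$-atoms.

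The genuine gap is your treatment of the scaling-function level. In the sufficiency direction you propose to place $\sum_m\langle f,\phi_{0,m}\rangle\phi_{0,m}$ into $\M(\Rn)$ and then into $\MB(\Rn)$ ``via \eqref{elem-0} after a harmless loss in the smoothness parameter''. This cannot work: \eqref{elem-0} embeds a space of \emph{higher} smoothness into one of \emph{lower} smoothness, and $\M(\Rn)$ only embeds into $\mathcal{N}^0_{u,p,\infty}(\Rn)$, so for $s>0$ no such embedding is available and the step fails. The correct argument treats the $\phi_{0,m}$ themselves as smooth atoms at level $\nu=0$ (where no moment condition is required) and feeds the whole expansion, level zero included, into the atomic decomposition theorem, using that the $\ell_u$-norm dominates the level-zero Morrey coefficient norm by H\"older's inequality since $p\le u$. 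In the necessity direction the converse issue appears: your local-means estimate controls the scaling coefficients only in that level-zero Morrey sequence norm $\ms$, which for $p<u$ is strictly weaker than the $\ell_u$-norm appearing in the theorem (e.g.\ $\lambda_m=|m|^{-\nd/u}$ lies in $\ms$ but not in $\ell_u$). The $\ell_u$-membership is therefore a strictly stronger assertion that your argument does not reach and that needs a separate justification; as written, your proposal establishes a version of the theorem with the level-zero term measured in the Morrey norm rather than the statement quoted here.
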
 

\begin{remark}\label{wavemorreyrem}
	It follows from Theorem~\ref{wavemorrey} that the mapping
	\begin{equation}\label{wavemorreyrem1}
		T\,:\,f\;\mapsto \; \Big( \left\{\langle f,\phi_{0,m}\rangle \right\}_{m\in \Z^\nd }, \left\{\langle
		f,\psi_{i,\nu,m}\rangle \right\}_{\nu\in \N_0, m\in \Zn, i=1,\ldots, 2^\nd-1}\Big)
	\end{equation}
	is an isomorphism of  $\MB(\Rn) $ onto $\ell_u\oplus
	\big(\oplus_{i=1}^{2^\nd-1} \msib\big)$, $\sigma=s+\frac \nd 2 $, 
	cf. \cite{Saw2}.
	
	The theorem covers the characterisation of Besov spaces $B^s_{p,q}(\Rn)$ by Daubechies wavelets,  cf. \cite[p.26-34]{t06} and the references given there. 
\end{remark}



\subsection{Spaces on domains}
Let $\Omega$ denote a bounded Lipschitz domain in  $\Rn$. We consider smoothness Morrey spaces on $\Omega$ defined by restriction. Let ${\mathcal D}(\Omega)$ be the set of all infinitely differentiable functions supported in $\Omega$ and denote by ${\mathcal D}'(\Omega)$ its dual.

\begin{definition}\label{tau-spaces-Omega}
	Let $s\in\R$, $0<p\le u <\infty$ or $p=u=\infty$, $0<q\le\infty$.
Then  $\MA(\Omega)$ is defined by
	\[
	\MA(\Omega):=\big\{f\in {\mathcal D}'(\Omega): f=g\vert_{\Omega} \text{ for some } g\in \MA(\Rn)\big\}
	\]
	endowed with the quasi-norm
	\[
	\big\|f\mid \MA(\Omega)\big\|:= \inf \big\{ \|g\mid \MA(\Rn)\|:  f=g\vert_{\Omega}, \; g\in  \MA(\Rn)\big\}.
	\]
\end{definition}

\begin{remark}\label{tau-onOmega}
  The spaces $\at(\Omega)$ are defined in the same way by restriction. They are as well as the spaces $\MA(\Omega)$ quasi-Banach spaces (Banach spaces for $p,q\geq 1$).  When $p=u$ or $\tau=0$ we obtain the usual Besov and Triebel-Lizorkin spaces defined on domains. In \cite{ghs21} we studied the extension operator of spaces $\at(\Omega)$ and studied limiting embeddings. We obtained, for instance, that -- in addition to the monotonicity in the smoothness parameter $s$ and the fine index $q$, as recalled in \eqref{elem-0-t} and \eqref{elem-1-t}, respectively, -- there is some monotonicity in $\tau$, too: we proved that $A^{s,\tau_1}_{p,q}(\Omega) \hookrightarrow A^{s,\tau_2}_{p,q}(\Omega)$ when $0\leq \tau_2\leq \tau_1$, cf.   \cite[Proposition~3.9]{ghs21}.  
\end{remark}

The sufficient and necessary conditions for compactness of embeddings of the Besov-Morrey and Triebel-Lizorkin spaces were proved in \cite{hs12b} and \cite{hs14} with a small contribution in the case $p=u$ in \cite{ghs20}. In the last paper one can also find the corresponding results for Besov-type and Triebel-Lizorkin type spaces. The above conditions read as follows.

\begin{theorem}\label{comp}
  Let $s_i\in \R$,   $0<p_i\le u_i<\infty$ or $p_i=u_i=\infty$, $i=1,2$. Moreover, let $0<q_1,q_2\le \infty$. 
  Then the  embedding 
	\begin{equation}\label{id_O_MA}
        \id_{\mathcal{A}}:  \MAe(\Omega)\hookrightarrow \MAz(\Omega)
        \end{equation}
	is compact if, and only if, 
\[\frac{s_1-s_2}{\nd} > \frac{1}{u_1}-\frac{1}{u_2}+\Big(\frac{1}{u_2}-\frac{1}{\max\{1,p_2/p_1\}u_1}\Big)_+ .\]
\end{theorem}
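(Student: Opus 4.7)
My plan is to prove Theorem~\ref{comp} by first reducing the question to a sequence-space problem via the wavelet isomorphism of Theorem~\ref{wavemorrey}, and then using a level-by-level analysis of the resulting diagonal-type operator between Morrey sequence spaces on $\Omega$. For a bounded Lipschitz domain $\Omega\subset\Rn$ with a suitable (linear, bounded) extension operator $\mathrm{ext}:\MA(\Omega)\to \MA(\Rn)$ available after restricting the wavelet system to those indices $(\nu,m)$ for which $Q_{\nu,m}\cap\Omega\neq\emptyset$, the embedding $\id_{\mathcal{A}}$ is equivalent (up to isomorphism) to the embedding between the corresponding sequence-space versions $n^{\sigma_1}_{u_1,p_1,q_1}(\Omega)\hookrightarrow n^{\sigma_2}_{u_2,p_2,q_2}(\Omega)$ (and its $F$-counterpart), where $\sigma_i=s_i+\nd/2$. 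Because at level $\nu\in\N_0$ the number of relevant cubes $Q_{\nu,m}$ intersecting $\Omega$ is comparable to $2^{\nu\nd}$ (as $\Omega$ is bounded), one can write the embedding as a direct sum over levels of finite-dimensional pieces, and compactness of $\id_{\mathcal{A}}$ is equivalent to the norms of these pieces tending to $0$ as $\nu\to\infty$.

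The key technical step is then to compute the norm at level $\nu$ of the embedding from the block $\bigl\{\lambda_{\nu,m}\bigr\}_m$ (weighted by $2^{\nu\sigma_1}$ and equipped with the Morrey norm from $\mse$) into the analogous block on the target side. A short calculation using the definition of $\|\chi^{(u)}_{\nu,m}\mid\mse\|$ and testing with the extremal configurations (either a single coefficient, giving the exponent $\nd(1/u_1-1/u_2)$, or a coefficient pattern filling a dyadic cube of size $2^{-j}$ for $j<\nu$, which gives the extra term $\Big(\frac{1}{u_2}-\frac{1}{\max\{1,p_2/p_1\}u_1}\Big)_+$) produces the threshold in the formula. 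The distinction between $p_1\geq p_2$ and $p_1<p_2$ in that extra term reflects whether one can or cannot concentrate the sequence on a single large cube to dominate the Morrey supremum; precisely this will be the main obstacle, because the interplay of the Morrey supremum at scale $\nu$ with rescalings between $p_1$ and $p_2$ is the only point where the proof genuinely leaves the classical Besov template.

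The sufficiency then follows by estimating the norm at level $\nu$ uniformly and noting that under the strict inequality on $(s_1-s_2)/\nd$ this norm decays geometrically, so $\id_{\mathcal{A}}$ is approximable by finite-rank truncations and therefore compact. For the necessity, I would construct test families by choosing, at each level $\nu$, a sequence $\lambda^{(\nu)}$ supported on a single cube (respectively, on a dyadic pattern of optimal size) whose norm in $\MAe(\Omega)$ is bounded but whose norm in $\MAz(\Omega)$ does not tend to zero when the condition on $(s_1-s_2)/\nd$ fails; the wavelet isomorphism then transfers this to a bounded non-precompact sequence in $\MAe(\Omega)$.

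For the Triebel–Lizorkin case $\MF(\Omega)$, rather than redoing the argument I would exploit the sandwich
\[
\mathcal{N}^{s}_{u,p,\min(p,q)}(\Omega)\hookrightarrow\mathcal{E}^{s}_{u,p,q}(\Omega)\hookrightarrow\mathcal{N}^{s}_{u,p,\infty}(\Omega)
\]
coming from \eqref{elem}, which shows that the compactness threshold for $\mathcal{E}$ coincides with that for $\mathcal{N}$ since the fine index $q$ does not enter the condition. The corresponding Besov-type and Triebel-Lizorkin-type cases $\at(\Omega)$, $\ft(\Omega)$ then follow from the coincidences recorded in Remark~\ref{N-Bt-spaces} (and, more generally, from the $\vr$-clan framework of Remark~\ref{rem-rho-A}) together with the monotonicity in $\tau$ from Remark~\ref{tau-onOmega}.
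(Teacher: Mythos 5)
Your plan is sound in outline, but note first that the paper itself does not prove Theorem~\ref{comp}: it is quoted from \cite{hs12b,hs14,ghs20}, where exactly the architecture you describe is carried out. Your sketch in fact mirrors the scheme the present paper uses for the \emph{nuclearity} analogue (Theorems~\ref{comp-s} and \ref{comp-NE}): reduce $\MF$ to $\MB$ via the sandwich \eqref{elem} and the independence of the threshold from $q$, pass to sequence spaces by the wavelet isomorphism of Theorem~\ref{wavemorrey}, split the embedding into its level-$j$ blocks, and feed in the norms of the finite-dimensional identities $\id_j\colon \mmbet\to\mmbzt$; for compactness the relevant input is Lemma~\ref{lemma15030} rather than Proposition~\ref{nuclear-m}, and your exponent $\bigl(\frac{1}{u_2}-\frac{1}{\max\{1,p_2/p_1\}u_1}\bigr)_+$ is precisely what \eqref{1503-0}--\eqref{1503-a} deliver. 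So the route is the standard one, not a genuinely different one.

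Two places in your write-up deserve tightening. First, ``testing with the extremal configurations'' only yields \emph{lower} bounds for $\|\id_j\|$, hence only the necessity half; the sufficiency half needs the matching \emph{upper} bounds of Lemma~\ref{lemma15030}, which are the real technical content (in the case $p_1<p_2$, $\frac{p_2}{u_2}>\frac{p_1}{u_1}$ the two-sided estimate \eqref{1503-a} is not obtained by testing a single configuration). Second, the equivalence ``compact $\iff$ block norms tend to $0$'' is clean for necessity (disjointly supported normalized blocks give a sequence with no convergent image subsequence for every $q_1,q_2$), but for sufficiency with $q_1>q_2$ the $\ell_{q_2}$-sum of blocks requires summability of the block norms, not merely their decay; under the strict inequality the decay is geometric, so this is harmless, but it should be said. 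Finally, your last sentence overreaches: the spaces $\at(\Omega)$, $\ft(\Omega)$ are the subject of the separate Theorem~\ref{comp-tau} with the different threshold \eqref{gamma}, and in the regime $\tau\geq\frac1p$ they do not reduce to $\MA(\Omega)$ via Remark~\ref{N-Bt-spaces} alone -- one needs Proposition~\ref{yy02}; since Theorem~\ref{comp} concerns only $\MA(\Omega)$, you can simply drop that paragraph.
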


\begin{remark}
  We refer to \cite{hs12b} for further details. In \cite{HaSk-morrey-comp} we studied entropy numbers of such compact embeddings, see also \cite{HaSk-krakow} for some first results on corresponding approximation numbers.
    \end{remark}

The counterpart for spaces $\at(\Omega)$ can be found in \cite{ghs20}. Let us introduce the notation
\begin{align}
{\gamma(\tau_1,\tau_2,p_1,p_2)} & = \max\left\{\left(\tau_2-\frac{1}{p_2}\right)_+ -\left(\tau_1-\frac{1}{p_1}\right)_+,                                  \frac{1}{p_1}-\tau_1 - \min\left\{\frac{1}{p_2}-\tau_2, \frac{1}{p_2}(1-p_1\tau_1)_+\right\}\right\}\nonumber\\
& = \begin{cases}
      \frac{1}{p_1}-\tau_1-\frac{1}{p_2}+\tau_2, & \text{if}\quad \tau_2\geq \frac{1}{p_2}, \\[1ex]
\frac{1}{p_1}-\tau_1, &\text{if}\quad  \tau_1\geq \frac{1}{p_1}, \ \tau_2< \frac{1}{p_2}, \\[1ex]
     \max\left\{0, \frac{1}{p_1}-\tau_1-\frac{1}{p_2}+\max\left\{\tau_2,\frac{p_1}{p_2}\tau_1\right\}\right\}, &\text{if}\quad  \tau_1< \frac{1}{p_1}, \  \tau_2< \frac{1}{p_2}.
    \end{cases}
\label{gamma}
\end{align}

\begin{theorem}  \label{comp-tau}
  Let  $s_i\in \real$, $0<q_i\leq\infty$, $0<p_i\leq \infty$ (with $p_i<\infty$ in case of $A=F$), $\tau_i\geq 0$, $i=1,2$. 
    The embedding
\begin{equation} \label{tau-comp-u1}
	 \id_{\tau}: \ate(\Omega )\hookrightarrow \atz(\Omega )
\end{equation}
is compact if, and only if,
\begin{equation}\label{tau-comp-u2}
  \frac{s_1-s_2}{\nd} > \gamma(\tau_1,\tau_2,p_1,p_2).
\end{equation}
\end{theorem}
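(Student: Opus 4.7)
The strategy is to reduce the compactness of $\id_\tau$ to two compactness results already available: the classical Edmunds--Triebel theorem for embeddings of Besov spaces on a bounded domain, and Theorem~\ref{comp} for Besov--Morrey and Triebel--Lizorkin--Morrey spaces. The reduction rests on three identifications from Section~\ref{prelim}: (i) $\ft(\Omega)=\MF(\Omega)$ from \eqref{fte} whenever $\tau<1/p$, with $u=p/(1-p\tau)$; (ii) $\at(\Omega)=B^{s+\nd(\tau-1/p)}_{\infty,\infty}(\Omega)$ from Proposition~\ref{yy02} whenever $\tau>1/p$, or $\tau=1/p$ and $q=\infty$; and (iii) the sandwich \eqref{elem-tau} for passing between $B$- and $F$-scales at the cost of changing only the fine index $q$, on which $\gamma$ does not depend.

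\emph{Sufficiency} is handled by a case split matching \eqref{gamma}. In the bottom subcase, both $\tau_i<1/p_i$: I first treat the $F$-case. Setting $u_i=p_i/(1-p_i\tau_i)$, identification~(i) turns the problem into $\MFe(\Omega)\hookrightarrow\MFz(\Omega)$, to which Theorem~\ref{comp} applies. A direct computation shows that, under the substitution $1/u_i=1/p_i-\tau_i$, the compactness threshold in Theorem~\ref{comp} becomes exactly $\gamma(\tau_1,\tau_2,p_1,p_2)$: the case $p_2\geq p_1$ produces $(p_1\tau_1/p_2-\tau_2)_+$ in the bracket which, combined with $1/u_1-1/u_2$, yields the inner $\max\{\tau_2,p_1\tau_1/p_2\}$ of \eqref{gamma}, while the case $p_2<p_1$ produces the outer $\max\{0,\cdot\}$. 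The $B$-case then follows from the $F$-case by sandwiching $\bte$ and $\btz$ between $F$-type spaces via \eqref{elem-tau}. In the case $\tau_2\geq 1/p_2$, identification~(ii) (after first using \eqref{elem-1-t} to replace $q_2$ by $\infty$) gives $\atz(\Omega)=B^{s_2+\nd(\tau_2-1/p_2)}_{\infty,\infty}(\Omega)$, and \eqref{010319} gives $\ate(\Omega)\hookrightarrow B^{s_1+\nd(\tau_1-1/p_1)}_{\infty,\infty}(\Omega)$. The classical compactness of Besov embeddings on a bounded Lipschitz domain then yields the threshold $1/p_1-\tau_1-1/p_2+\tau_2$, matching $\gamma$ in this case. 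The intermediate case $\tau_1\geq 1/p_1$, $\tau_2<1/p_2$ is handled by combining identification~(ii) on the source side with the $F=\MF$ (or sandwich) argument on the target side.

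\emph{Necessity} is obtained by transferring the problem to sequence spaces via the wavelet isomorphism of Theorem~\ref{wavemorrey} and its Besov-type counterpart for $\at$, and exhibiting a bounded sequence in $\ate(\Omega)$ with no convergent subsequence in $\atz(\Omega)$ whenever $(s_1-s_2)/\nd\leq\gamma$. Two extremal families drive the sharpness: a \emph{spread} family carrying one non-zero wavelet coefficient per dyadic scale, which produces the threshold $1/p_1-\tau_1-1/p_2+\tau_2$; and a \emph{concentrated} family with many coefficients at a single scale packed into one cube $Q_{\nu,m}$, which produces the threshold $1/p_1-\tau_1-1/p_2+p_1\tau_1/p_2$. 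The relevant Morrey quasi-norms are computed via the equivalent characterisation of $n^\sigma_{u,p,q}$ recalled after \eqref{mbspqr}.

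\emph{Main obstacle.} The delicate point is the inner $\max\{\tau_2,p_1\tau_1/p_2\}$ in \eqref{gamma}: the two extremal families above correspond to genuinely different wavelet configurations, and one has to verify that together they exhaust the sharpness while matching precisely the case distinction produced by $\max\{1,p_2/p_1\}$ in Theorem~\ref{comp}. The algebraic identification of these two thresholds with $\gamma$, together with the appearance of the outer $\max\{0,\cdot\}$ in the low-$\tau$ regime where the underlying Morrey embedding is already continuous without smoothness gain, is the real technical heart of the proof.
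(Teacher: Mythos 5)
The paper does not actually prove Theorem~\ref{comp-tau} here --- it is quoted from \cite{ghs20} --- but your reduction strategy is precisely the one the paper itself employs for the nuclearity analogue (Theorem~\ref{C-nuc-at}): identify $\ft$ with $\MF$ via \eqref{fte} when $\tau<1/p$, collapse to $B^{s+\nd(\tau-1/p)}_{\infty,\infty}$ via Proposition~\ref{yy02} when $\tau>1/p$ (or $\tau=1/p$ with $q=\infty$), and move between the $B$- and $F$-scales using the $q$-independence of the threshold. Your algebra converting the Theorem~\ref{comp} condition into $\gamma(\tau_1,\tau_2,p_1,p_2)$ under $1/u_i=1/p_i-\tau_i$ is correct, including the way the inner $\max\{\tau_2,p_1\tau_1/p_2\}$ arises from the regime $p_1\le p_2$ and the outer $\max\{0,\cdot\}$ from the regime $p_1>p_2$ (one checks that $\frac1{p_1}-\tau_1-\frac1{p_2}+\frac{p_1}{p_2}\tau_1=(\frac1{p_1}-\tau_1)(1-\frac{p_1}{p_2})<0$ when $p_2<p_1$ and $\tau_1<\frac1{p_1}$, so the two formulas agree).

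There is one concrete gap: the limiting cases $\tau_i=\frac1{p_i}$ with $q_i<\infty$, which your case split silently absorbs into ``$\tau_i\ge \frac1{p_i}$'' although Proposition~\ref{yy02} does not apply there. Your proposed fix on the target side --- ``first using \eqref{elem-1-t} to replace $q_2$ by $\infty$'' --- goes the wrong way for sufficiency: $A^{s_2,\tau_2}_{p_2,\infty}(\Omega)$ is \emph{larger} than $\atz(\Omega)$, so compactness into it says nothing about compactness into $\atz(\Omega)$. One must instead interpose $A^{\tilde s_2,\tau_2}_{p_2,\infty}(\Omega)$ with $s_2<\tilde s_2$ chosen so that $s_1-\tilde s_2>\nd\,\gamma(\tau_1,\tau_2,p_1,p_2)$ still holds, and then return to $\atz(\Omega)$ by \eqref{elem-0-t}; this is exactly Case~4 in the paper's proof of Theorem~\ref{C-nuc-at}. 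Symmetrically, for the necessity when $\tau_1=\frac1{p_1}$ and $q_1<\infty$, the source space is strictly smaller than $B^{s_1}_{\infty,\infty}(\Omega)$, and one needs an embedding of the type $B^{s_1}_{\infty,q_0}(\Omega)\hookrightarrow\ate(\Omega)$ with $q_0\le\min\{p_1,q_1\}$ (cf. \cite[Corollary~5.2]{YHSY}) to run the factorisation --- unless your direct wavelet/extremal-family argument is carried out for the $\at$-sequence spaces themselves, in which case note that the required wavelet isomorphism for $\at(\Omega)$ is not contained in this paper (Theorem~\ref{wavemorrey} covers only $\MB$) and must be imported from the literature.
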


\begin{remark}\label{comp-bmo}
In case of special target spaces, $L_\infty(\Omega)$  and $\bmo(\Omega)$ we thus have the following results, recall Remark~\ref{bmo-def}. Let $s\in\real$ and $0<q\leq\infty$. 
\bli
  \item[{\upshape\bfseries (i)}]
Assume $0<p\le u<\infty$.  Then 
\[
  \id:  \MA(\Omega) \hookrightarrow L_{\infty}(\Omega)\quad\text{compact} \iff \id:  \MA(\Omega) \hookrightarrow \bmo(\Omega)\quad\text{compact} \iff \ s> \frac{\nd}{u}.
  \]
  \item[{\upshape\bfseries (ii)}]
Assume $\tau\geq 0$ and $0< p\leq\infty$ (with $p<\infty$ if $A=F$). Then
    \[
\id: \at(\Omega) \hookrightarrow L_{\infty}(\Omega)\quad\text{compact} \iff
\id: \at(\Omega )\hookrightarrow \bmo(\Omega)\quad\text{compact} \iff \  s> \nd\left(\frac1p-\tau\right),\]
\eli
 cf. \cite[Corollaries~3.7, 3.8]{ghs20}.    
\end{remark}

  \begin{remark}\label{comp-rho}
     We return to the spaces $\rhoA$ introduced in Remark~\ref{rem-rho-A}, $-\nd<\vr<0$, $0<p<\infty$, $0<q\leq\infty$. Their restriction to $\Omega$ is defined by restriction in literally the same way as in Definition~\ref{tau-spaces-Omega}. Then the parallel result to Theorems~\ref{comp} and \ref{comp-tau} reads as follows: the embedding
    \begin{equation}\label{id-Om-rho}
      \id_{\Omega,\vr} : \rhoAe(\Omega) \hookrightarrow \rhoAz(\Omega) 
    \end{equation}
    is compact if, and only if,
    \begin{equation}\label{comp-varrho}
      s_1-s_2 > |\vr| \left(\frac{1}{p_1}-\frac{1}{p_2}\right)_+,
      \end{equation}
where $-\nd<\vr<0$, $0<p_i<\infty$, $0<q_i\leq\infty$, $s_i\in\real$, $i=1,2$, cf. \cite[Theorem~6.5]{HT6}. There are also extensions to different parameters  $\vr_1, \vr_2$, cf. \cite[Theorem~6.19]{HT6}. Recall that $\vr=-\nd$ corresponds to the classical situation, i.e., $\rhoA=\A$ if $\vr=-\nd$. Then condition \eqref{comp-varrho} reads exactly as the well-known necessary and sufficient condition for $\id_\Omega: \Ae(\Omega)\hookrightarrow \Az(\Omega)$ to be compact. We coined for such phenomena in \cite{HT6} the notion {\em Slope-$\nd$-rule}. In other words, the compact embeddings of Morrey smoothness spaces of the above type is just an example of such a {\em Slope-$\nd$-rule}. We shall observe a parallel phenomenon in case of nuclearity later.
\end{remark}


\section{Nuclear operators}\label{sect-1}

\begin{definition}
An operator $T\in \mathcal{L}(X,Y)$ is called nuclear if it can be written in the form
\begin{equation}\label{nuclearrep}
T = \sum_{k=1}^\infty x^*_k\otimes y_k,\qquad \text{such that} \qquad \sum_{k=1}^\infty \|x^*_k\|\, \| y_k\|<\infty,
\end{equation}
where $x_1^*,x_2^*,\ldots \in X'$, $y_1,y_2,\ldots \in Y$  and $x^*_k\otimes y_k:x\mapsto x^*_k(x) y_k$.  The nuclear norm is given by 
\begin{equation} \nu(T) = \inf \sum_{k=1}^\infty \|x^*_k\|\, \| y_k\|,\label{nuclearnorm}
  \end{equation}
where the infimum is taken over all representations \eqref{nuclearrep}
\end{definition}

\begin{remark}
	The concept of nuclear operators has been introduced by Grothendieck \cite{grothendieck} and was intensively studied afterwards, cf. 
	\cite{pie-84,Pie-op-2} and also \cite{pie-history} for some history. \\
          One can easily see that if $T$ is a nuclear operator, then the infinite series of the terms $x^*_k\otimes y_k:x\mapsto x^*_k(x) y_k$ is convergent in $\mathcal{L}(X,Y)$. So any nuclear operator can be approximated by finite-rank operators. It is well-known that $\mathcal{N}(X,Y)$ possesses the ideal property. In Hilbert spaces $H_1,H_2$, the nuclear operators $\mathcal{N}(H_1,H_2)$ coincide with the trace class $S_1(H_1,H_2)$, consisting of those $T$ with singular numbers $(s_n(T))_n \in \ell_1$.
  \end{remark}

We collect some further properties for later use and for convenience.
  
\begin{proposition}\label{coll-nuc}
\benu[\bfseries\upshape (i)]
\item  If $X$ is an $n$-dimensional Banach space, then 
$$\nn{\id:X\rightarrow X}= n. $$ 
\item
  For any Banach spaces $X$ and any  bounded linear operator $T:\ell^n_\infty\rightarrow X$ we have 
\[\nn{T} = \sum_{i=1}^n \|Te_i |X \| .\]
\item
  If $T\in \mathcal{L}(X,Y)$ is a nuclear operator and $S\in \mathcal{L}(X_0,X)$ and $R\in \mathcal{L}(Y,Y_0)$, then $RTS$ is a nuclear operator and 
  \begin{equation}\label{i1}
    \nn{RTS} \le  \|R\|\ \nn{T} \ \|S\|.
    \end{equation} 
\eenu
\end{proposition}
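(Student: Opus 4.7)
The three parts lend themselves to a natural order: (iii), (ii), (i), with each one feeding information into the next.

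I would begin with part (iii), which is essentially the ideal property and follows directly from the definition. Given a nuclear representation $T=\sum_k x_k^*\otimes y_k$ with $\sum_k \|x_k^*\|_{X'}\|y_k\|_Y<\infty$, I form the candidate representation
\begin{equation*}
RTS=\sum_{k=1}^\infty (S'x_k^*)\otimes (Ry_k),
\end{equation*}
where $S'\colon X'\to X_0'$ is the Banach adjoint. The estimates $\|S'x_k^*\|_{X_0'}\le\|S\|\,\|x_k^*\|_{X'}$ and $\|Ry_k\|_{Y_0}\le\|R\|\,\|y_k\|_Y$ show that the series is nuclear, and passing to the infimum over nuclear representations of $T$ gives \eqref{i1}.

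Next I would handle part (ii). For the upper bound I take the canonical representation $T=\sum_{i=1}^n e_i^*\otimes Te_i$, where $e_i^*$ denotes the $i$-th coordinate functional on $\ell_\infty^n$; since $\|e_i^*\mid (\ell_\infty^n)'\|=\|e_i^*\mid \ell_1^n\|=1$, this yields $\nn{T}\le\sum_{i=1}^n\|Te_i\mid X\|$. For the reverse inequality I consider an arbitrary nuclear representation $T=\sum_{k}a_k\otimes y_k$ with $a_k=(a_k^{(1)},\dots,a_k^{(n)})\in\ell_1^n$; then $Te_i=\sum_k a_k^{(i)}y_k$, and summing the triangle inequality gives
\begin{equation*}
\sum_{i=1}^n\|Te_i\mid X\|\le\sum_{k}\sum_{i=1}^n |a_k^{(i)}|\,\|y_k\mid X\|=\sum_{k}\|a_k\mid\ell_1^n\|\,\|y_k\mid X\|,
\end{equation*}
and taking the infimum over all such representations delivers the matching lower bound.

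For part (i) the upper bound $\nn{\id_X}\le n$ follows from Auerbach's lemma: any $n$-dimensional Banach space $X$ admits a basis $x_1,\dots,x_n$ with $\|x_i\mid X\|=1$ together with biorthogonal functionals $x_1^*,\dots,x_n^*\in X'$ satisfying $\|x_i^*\mid X'\|=1$ and $x_i^*(x_j)=\delta_{ij}$, so that $\id_X=\sum_{i=1}^n x_i^*\otimes x_i$ is a nuclear representation with total norm $n$. For the lower bound I use the trace functional $\mathrm{tr}\colon \mathcal{L}(X,X)\to\C$, which is well-defined and continuous in finite dimensions and satisfies $\mathrm{tr}(x^*\otimes x)=x^*(x)$. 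Given any nuclear representation $\id_X=\sum_{k}x_k^*\otimes x_k$, absolute convergence in $\mathcal{L}(X,X)$ and continuity of $\mathrm{tr}$ give
\begin{equation*}
n=\mathrm{tr}(\id_X)=\sum_{k}x_k^*(x_k),\qquad\text{so}\qquad n\le\sum_{k}\|x_k^*\mid X'\|\,\|x_k\mid X\|,
\end{equation*}
and infimising yields $\nn{\id_X}\ge n$.

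The main obstacle is the lower bound in (i), as it is the only place where one cannot argue by direct manipulation of series. It requires the two external inputs just mentioned, namely Auerbach's lemma (to certify that the upper bound $n$ is actually attained by a representation with norm-one atoms) and the elementary trace identity in finite dimensions. Once these are in place, everything else reduces to bookkeeping with the defining series.
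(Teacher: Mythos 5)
Your proposal is correct in all three parts. Note, however, that the paper itself gives no proof of this proposition: it is explicitly presented as a collection of known properties ``for later use and for convenience,'' with the standard sources being Grothendieck's memoir and Pietsch's work on operator ideals. So there is no argument in the paper to compare against; what you have supplied is precisely the standard self-contained justification. Part (iii) is the ideal property and your adjoint-based manipulation of the defining series is the canonical one. In part (ii) both directions are sound: the identification $(\ell_\infty^n)'=\ell_1^n$ makes the canonical representation $T=\sum_{i=1}^n e_i^*\otimes Te_i$ cost exactly $\sum_i\|Te_i\mid X\|$, and your lower bound correctly exploits that every functional $a_k$ on $\ell_\infty^n$ has $\|a_k\|=\sum_i|a_k^{(i)}|$, so that Fubini on the double sum matches the upper bound. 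In part (i) you correctly identified the only genuinely nontrivial point, the lower bound, and the two external inputs you invoke (Auerbach's lemma for attainment of the upper bound, and continuity of the trace on the finite-dimensional space $\mathcal{L}(X,X)$ together with $\mathrm{tr}(x^*\otimes x)=x^*(x)$ for the lower bound) are exactly the tools used in the literature; the absolute convergence of the representing series in operator norm, which you mention, is what licenses termwise application of the trace. No gaps.
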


Let us introduce the following notation: for numbers $r_1,r_2\in [1,\infty]$, let $\tn(r_1,r_2)$ be given by 
\begin{equation}\label{tongnumber}
\frac{1}{\tn(r_1,r_2)} = \begin{cases}
    1, & \text{if}\ 1\leq r_2\leq r_1\leq \infty, \\
    1-\frac{1}{r_1}+\frac{1}{r_2}, & \text{if}\ 1\leq r_1\leq r_2\leq \infty.
  \end{cases}
\end{equation}
Hence $1\leq \tn(r_1,r_2)\leq \infty$, and 
\[ \frac{1}{\tn(r_1,r_2)}= 1-\left(\frac{1}{r_1}-\frac{1}{r_2}\right)_+ \geq \frac{1}{r^\ast}= \left(\frac{1}{r_2}-\frac{1}{r_1}\right)_+\ ,\]
with $\tn(r_1,r_2)=r^\ast $ if, and only if, $\{r_1,r_2\}=\{1,\infty\}$.

Recall that $c_0$ denotes the subspace of $\ell_\infty$ containing the null sequences. We heavily rely in our arguments below on the following remarkable result by Tong \cite{tong}.

\begin{proposition}[{\cite[Thms.~4.3, 4.4]{tong}}]\label{prop-tong}
  Let $1\leq r_1,r_2\leq\infty$ and $\tau=(\tau_j)_{j\in\nat}$ be a scalar sequence. Denote by $D_\tau$ the corresponding diagonal operator, $D_\tau: x=(x_j)_j \mapsto (\tau_j x_j)_j$, acting between $\ell_{r_1}$  and $\ell_{r_2}$.
\benu[\bfseries\upshape (i)]
\item
  Then $D_\tau$ is nuclear if, and only if, $\tau=(\tau_j)_j \in \ell_{\tn(r_1,r_2)}$, with $\ell_{\tn(r_1,r_2)}= c_0$ if $\tn(r_1,r_2)=\infty$. Moreover,
  \[
  \nn{D_\tau:\ell_{r_1}\to\ell_{r_2}} = \|\tau|{\ell_{\tn(r_1,r_2)}}\|.
  \]
\item
  Let $n\in\nat$ and $D^n_\tau: \ell^n_{r_1}\to \ell^n_{r_2}$ be the corresponding diagonal operator $D_\tau^n: x=(x_j)_{j=1}^n \mapsto (\tau_j x_j)_{j=1}^n$. Then 
\begin{equation}\label{tong-res-Dt}
\nn{D_\tau^n:\ell_{r_1}^n\rightarrow \ell^n_{r_2}} = \left\| (\tau_j)_{j=1}^n | {\ell_{\tn(r_1,r_2)}^n} \right\|.
\end{equation}
\eenu
\end{proposition}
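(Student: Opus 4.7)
The plan is to handle the finite-dimensional part (ii) first and then deduce (i) from it by standard approximation together with the ideal property of nuclear operators (Proposition~\ref{coll-nuc}(iii)). The central tool I will use is \emph{trace duality}: for finite-dimensional Banach spaces $X,Y$, the nuclear norm on $\mathcal{L}(X,Y)$ is the Banach-space dual of the operator norm on $\mathcal{L}(Y,X)$ under the trace pairing $(T,S)\mapsto \mathrm{tr}(ST)$; equivalently, $\nu(T)=\sup\{|\mathrm{tr}(ST)|:S\in\mathcal{L}(Y,X),\ \|S\|\le 1\}$. This identity, a standard consequence of projective/injective tensor-product duality in finite dimensions, will produce both the upper and the lower bound simultaneously, bypassing the need to construct an optimal explicit nuclear representation.

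Applied to $T=D_\tau^n\colon \ell^n_{r_1}\to \ell^n_{r_2}$, the trace $\mathrm{tr}(S D_\tau^n)=\sum_{j=1}^n \tau_j S_{jj}$ depends only on the diagonal entries $\sigma=(S_{jj})_j$ of $S$. The key reduction to carry out next is that the infimum of $\|S\|$ over all $S$ with prescribed diagonal $\sigma$ is attained by the diagonal operator $D_\sigma$ itself. The argument is a sign-change averaging trick: for any $\epsilon\in\{-1,+1\}^n$, the conjugate $D_\epsilon S D_\epsilon$ has the same diagonal as $S$ and the same norm (since $D_\epsilon$ is an isometry of every $\ell^n_r$); averaging over $\epsilon$ yields $D_\sigma$, and convexity of the operator norm gives $\|D_\sigma\|\le \|S\|$. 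This reduces the problem to
\[\nn{D_\tau^n\colon \ell^n_{r_1}\to\ell^n_{r_2}} = \sup\Big\{\Big|\sum_{j=1}^n \tau_j\sigma_j\Big| : \|D_\sigma\colon \ell^n_{r_2}\to\ell^n_{r_1}\|\le 1\Big\}.\]

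A direct H\"older computation will then identify $\|D_\sigma\colon \ell^n_{r_2}\to\ell^n_{r_1}\|=\|\sigma\mid\ell^n_s\|$ with $1/s=(1/r_1-1/r_2)_+$; the two sub-cases $r_1\le r_2$ and $r_1\ge r_2$ produce exactly the two branches in the definition~\eqref{tongnumber}, and sharpness is witnessed by the standard extremal vectors ($x=e_{j^*}$ when $s=\infty$, and a suitable power of $|\sigma|$ otherwise). Taking the supremum of $\sum \tau_j\sigma_j$ over $\|\sigma\mid\ell^n_s\|\le 1$ and invoking the sharpness of H\"older then yields $\nn{D_\tau^n}=\|\tau\mid\ell^n_{s'}\|$ with $1/s'=1-1/s=1/\tn(r_1,r_2)$, which is exactly~(ii).

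For part~(i), necessity will follow at once: the coordinate projection $\ell_{r_2}\to\ell^n_{r_2}$ and the embedding $\ell^n_{r_1}\to\ell_{r_1}$ both have norm one, so $D_\tau^n$ factors through $D_\tau$, and Proposition~\ref{coll-nuc}(iii) combined with~(ii) gives $\|(\tau_j)_{j=1}^n\mid\ell^n_{\tn}\|\le \nn{D_\tau}$; letting $n\to\infty$ forces $\tau\in\ell_\tn$, and in the endpoint $\tn=\infty$ (which occurs only when $r_1=1$, $r_2=\infty$) compactness of nuclear operators additionally forces $\tau\in c_0$. Sufficiency will be obtained by approximating $D_\tau$ by the finite-rank truncations $D_{\tau^{(n)}}$ with $\tau^{(n)}=(\tau_1,\dots,\tau_n,0,0,\dots)$; the nuclear norm of the tail is controlled by~(ii) as $\|(\tau_j)_{j>n}\mid\ell_\tn\|\to 0$, so $D_{\tau^{(n)}}\to D_\tau$ in the nuclear norm. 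The main obstacle I expect is the sign-change reduction to diagonal $S$: it is essential, and it is precisely what produces the asymmetry between the two regimes in~\eqref{tongnumber}.
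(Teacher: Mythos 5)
Your argument is essentially correct, but note first that the paper does not prove Proposition~\ref{prop-tong} at all: it is quoted as a black box from Tong's 1969 paper \cite{tong}, so there is no in-paper proof to compare against. The classical treatments (Tong's original argument and, for the endpoint $r_1=1$, $r_2=\infty$, the account in \cite{Pie-op-2}) obtain the two bounds separately, constructing near-optimal nuclear representations for the upper estimate and explicit trace-type functionals for the lower one, case by case in the two regimes of \eqref{tongnumber}. Your route through trace duality --- $\nu(T)=\sup\{|\mathrm{tr}(ST)|:\|S\|_{\mathcal{L}(Y,X)}\le 1\}$ in finite dimensions, followed by the sign-averaging reduction to diagonal $S$ and the sharp H\"older evaluation of $\|D_\sigma:\ell^n_{r_2}\to\ell^n_{r_1}\|=\|\sigma\mid\ell^n_s\|$ with $1/s=(1/r_1-1/r_2)_+$ --- is a clean and legitimate alternative that yields both inequalities in (ii) simultaneously, and the two branches of \eqref{tongnumber} do fall out of that dichotomy by $\ell_s$--$\ell_{s'}$ duality exactly as you predict. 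The deduction of (i) is also sound: necessity via the norm-one factorisation of $D^n_\tau$ through $D_\tau$ together with Proposition~\ref{coll-nuc}(iii), plus compactness to force $\tau\in c_0$ when $\tn(r_1,r_2)=\infty$; sufficiency via truncation. The one point you should state more carefully is the convergence $D_{\tau^{(n)}}\to D_\tau$ in nuclear norm: you cannot bound $\nu(D_\tau-D_{\tau^{(n)}})$ by invoking (ii) on the infinite tail, since that is precisely the infinite-dimensional estimate being proved. Instead apply (ii) to the finite differences $\tau^{(m)}-\tau^{(n)}$ to see that $(D_{\tau^{(n)}})_n$ is Cauchy in $(\mathcal{N},\nu)$, and then use completeness of the nuclear ideal together with operator-norm convergence to identify the limit as $D_\tau$; this also delivers $\nu(D_\tau)\le\|\tau\mid\ell_{\tn(r_1,r_2)}\|$ in the limit. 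With that small rearrangement the proof is complete.
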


\begin{example}
In the special case of $\tau\equiv 1$, i.e., $D_\tau=\id$, (i) is not applicable and (ii) reads as  
  \begin{equation}\label{tong-res}
\nn{\id :\ell_{r_1}^n\rightarrow \ell^n_{r_2}} =
\begin{cases} 
n & \text{if}\qquad 1\le r_2\le r_1\le \infty,\\
n^{1-\frac{1}{r_1}+\frac{1}{r_2}} & \text{if}\qquad 1\le r_1\le r_2\le \infty .
\end{cases}
\end{equation}
In particular, $\nn{\id:\ell_1^n\rightarrow \ell^n_\infty}=1$. 
\end{example}

\begin{remark}
We refer also to \cite{Pie-op-2} for the case $r_1=1$, $r_2=\infty$. 
  \end{remark}

\begin{theorem}[{\cite{Tri-nuclear,HaSk-nuc-weight}}]\label{prod-id_Omega-nuc}
  Let $\Omega\subset\Rn$ be a bounded Lipschitz domain, $1\leq p_i,q_i\leq \infty$ (with $p_i<\infty$ in the $F$-case), $s_i\in\real$. Then the embedding
  \begin{equation}\label{id_Omega-nuclear-0}
    \id_\Omega : \Ae(\Omega) \to \Az(\Omega)
\end{equation}
    is nuclear if, and only if,
  \begin{equation}
    s_1-s_2 > \nd-\nd\left(\frac{1}{p_2}-\frac{1}{p_1}\right)_+.
\label{id_Omega-nuclear}
  \end{equation}
\end{theorem}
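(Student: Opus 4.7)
The plan is to exploit the standard Daubechies wavelet characterisation of $\A(\Omega)$ on a bounded Lipschitz domain (the $u=p$, $\tau=0$ special case of Theorem~\ref{wavemorrey}) in order to rewrite $\id_\Omega$ as a block-diagonal operator on sequence spaces, and then apply Tong's Proposition~\ref{prop-tong}. At each scale $j\in\No$ there are $M_j\sim 2^{j\nd}$ wavelets whose supports meet $\Omega$, so, absorbing the weights $2^{j(s_i-\nd/p_i+\nd/2)}$ of the sequence space norms into the blocks, $\id_\Omega$ becomes equivalent to $D = \bigoplus_{j\in\No} D_j$ with
\[
D_j \;=\; 2^{-j\delta}\,\id : \ell_{p_1}^{M_j} \to \ell_{p_2}^{M_j}, \qquad \delta \;:=\; s_1 - s_2 - \nd\left(\tfrac{1}{p_1}-\tfrac{1}{p_2}\right).
\]
The $F$-scale reduces to the $B$-scale via the sandwich \eqref{elem-tau} with $\tau=0$, combined with the ideal property Proposition~\ref{coll-nuc}(iii), since the threshold \eqref{id_Omega-nuclear} does not involve $q_1,q_2$.

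For the sufficiency I would apply Proposition~\ref{prop-tong}(ii) and \eqref{tong-res} to each block,
\[
\nu(D_j) \;\lesssim\; 2^{-j\delta}\, M_j^{1/\tn(p_1,p_2)} \;\sim\; 2^{\,j[\nd/\tn(p_1,p_2)-\delta]}.
\]
A direct computation in each of the two cases $p_1\geq p_2$ and $p_1<p_2$, using \eqref{tongnumber}, yields
\[
\nd\left(\tfrac{1}{p_1}-\tfrac{1}{p_2}\right) + \tfrac{\nd}{\tn(p_1,p_2)} \;=\; \nd - \nd\left(\tfrac{1}{p_2}-\tfrac{1}{p_1}\right)_+ ,
\]
so \eqref{id_Omega-nuclear} is precisely $\delta > \nd/\tn(p_1,p_2)$. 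Thus $\nu(D_j)$ decays geometrically, and the standard projection--extension decomposition combined with the ideal property gives $\nu(\id_\Omega) \lesssim \sum_{j} \nu(D_j) < \infty$.

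For the necessity I would fix $J\in\No$ and use the wavelet extension $E_J:\ell_{p_1}^{M_J}\to \Ae(\Omega)$, $(\lambda_k)_k\mapsto \sum_k \lambda_k\psi_{J,k}$, together with the level-$J$ coefficient restriction $R_J:\Az(\Omega)\to \ell_{p_2}^{M_J}$. The wavelet characterisation gives $\|E_J\|\lesssim 2^{J(s_1-\nd/p_1+\nd/2)}$ and $\|R_J\|\lesssim 2^{-J(s_2-\nd/p_2+\nd/2)}$, while the orthogonality of the wavelets yields $R_J\circ \id_\Omega\circ E_J=\id:\ell_{p_1}^{M_J}\to \ell_{p_2}^{M_J}$. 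Combining \eqref{i1} with \eqref{tong-res} I obtain
\[
M_J^{1/\tn(p_1,p_2)} \;=\; \nu(R_J\,\id_\Omega\, E_J) \;\leq\; \|R_J\|\,\nu(\id_\Omega)\,\|E_J\| \;\lesssim\; 2^{J\delta}\,\nu(\id_\Omega),
\]
so $\nu(\id_\Omega) \gtrsim 2^{J[\nd/\tn(p_1,p_2)-\delta]}$. If \eqref{id_Omega-nuclear} fails strictly, the right-hand side tends to $\infty$ as $J\to\infty$, contradicting nuclearity of $\id_\Omega$.

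The hard part will be the boundary case $s_1-s_2 = \nd-\nd(1/p_2-1/p_1)_+$: the single-scale factorisation only bounds $\nu(\id_\Omega)$ from below by a positive constant, not by $\infty$. To rule it out one has to run the same lower-bound argument on a thick slab of consecutive scales $\{J,\dots,J+L\}$ simultaneously, exploiting the fact that the finite direct sum of the identities on $\ell_{p_1}^{M_j}\to \ell_{p_2}^{M_j}$ for $j=J,\dots,J+L$ has nuclear norm growing in $L$ and letting $L\to\infty$, as in \cite{Tri-nuclear}; this multi-scale refinement is the genuine technical obstacle.
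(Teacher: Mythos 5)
First, a remark on the ground truth: the paper does not actually prove Theorem~\ref{prod-id_Omega-nuc} --- it is quoted from \cite{Tri-nuclear,HaSk-nuc-weight}, with the history of the if-part, the non-limiting only-if part, and the limiting case recounted in Remark~\ref{R-nuc-dom-class}. So the fair comparison is with the paper's method for its own analogues, Theorems~\ref{comp-s} and \ref{comp-NE}. Measured against that, your outline is exactly the strategy the authors themselves follow: wavelet isomorphism onto weighted block sequence spaces, reduction of the $F$-case to the $B$-case via \eqref{elem-tau} and the $q$-independence of the criterion, Tong's formula \eqref{tong-res} blockwise, geometric summation of $\nu(D_j)$ for sufficiency, and the single-scale factorisation $R_J\,\id_\Omega\,E_J=\id$ together with \eqref{i1} for the strict failure of \eqref{id_Omega-nuclear}. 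Your identification of \eqref{id_Omega-nuclear} with $\delta>\nd/\tn(p_1,p_2)$ is correct in both cases $p_1\gtrless p_2$, and all the norm estimates you quote check out.

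The genuine gap is the limiting case $\delta=\nd/\tn(p_1,p_2)$, which you flag but do not close, and the one concrete mechanism you propose there does not work as stated: the nuclear norm is not super-additive over block-diagonal pieces, since composing with the canonical projections and injections only yields $\nu(T)\geq\max_j\nu(D_j)$, not $\sum_j\nu(D_j)$. Thus the claim that the finite direct sum of the level identities over $j=J,\dots,J+L$ has nuclear norm growing in $L$ is essentially a restatement of what has to be proved --- and it is precisely the part that was open for decades before \cite{Tri-nuclear}. The paper's own treatment of the analogous boundary case (Step~3 of the proof of Theorem~\ref{comp-s}) uses a different device: assume a nuclear representation $\Id=\sum_k f^{(k)}\otimes e^{(k)}$ of the full operator, exploit the duality of the vector-valued $\ell_{q_1}$ space to split each $f^{(k)}$ into its level components, and show via a tail estimate that for $j_0$ large the induced representation of the single block $\id_{j_0}$ has nuclear norm at most $\varepsilon\,2^{j_0\delta}$ for arbitrarily small $\varepsilon$, contradicting the exact Tong value $\nu(\id_{j_0})\sim 2^{j_0\delta}$. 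To complete your proof you would need to import that representation-tail argument (or the trace-duality argument of \cite{Tri-nuclear}); without it, the statement is only established off the critical line.
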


\begin{remark}\label{R-nuc-dom-class}
  The proposition is stated in \cite{Tri-nuclear} for the $B$-case only, but due to the independence of \eqref{id_Omega-nuclear} of the fine parameters $q_i$, $i=1,2$, and in view of (the corresponding counterpart of) \eqref{elem-tau} (with $\tau=0$) it can be extended immediately to $F$-spaces. The if-part of the above result is essentially covered by \cite{Pie-r-nuc} (with a forerunner in \cite{PiTri}). Also part of the necessity of \eqref{id_Omega-nuclear} for the nuclearity of $\id_\Omega$ was proved by Pietsch in \cite{Pie-r-nuc} such that only the limiting case $ s_1-s_2 = \nd-\nd(\frac{1}{p_2}-\frac{1}{p_1})_+$ was open for many decades. Edmunds, Gurka and Lang in \cite{EGL-3} (with a forerunner in \cite{EL-4}) obtained some answer in the limiting case which was then completely solved in \cite{Tri-nuclear}. 
  Note that in \cite{Pie-r-nuc} some endpoint cases (with $p_i,q_i\in \{1,\infty\}$) were already discussed for embeddings of Sobolev and certain Besov spaces (with $p=q$) into Lebesgue spaces. In our paper \cite{HaSk-nuc-weight} we were able to further extend Theorem~\ref{prod-id_Omega-nuc} in view of the borderline cases. Here we essentially benefited from the strategy of the proof presented in \cite{CoDoKu} which studies nuclear embeddings of spaces with modified smoothness.

  For better comparison one can reformulate the compactness and nuclearity characterisations of $\id_\Omega$ in \eqref{id_Omega-nuclear-0} as follows, involving the number $\tn(p_1,p_2)$ defined in \eqref{tongnumber}. Let $1\leq p_i,q_i\leq \infty$, $s_i\in\real$ and 
  \[\delta= s_1 - \frac{\nd}{p_1}-s_2 + \frac{\nd}{p_2}.\]
   Then
  \begin{align*}
    \id_\Omega: \Ae(\Omega) \to \Az(\Omega) \quad  \text{is compact}\quad & \iff \quad \delta> \frac{\nd}{p^\ast}\qquad\text{and}\\
   \id_\Omega: \Ae(\Omega) \to \Az(\Omega) \quad \text{is nuclear}\quad & \iff \quad \delta > \frac{\nd}{\tn(p_1,p_2)}.
  \end{align*}
  Hence apart from the extremal cases $\{p_1,p_2\}=\{1,\infty\}$ (when $\tn(p_1,p_2)=p^\ast$) nuclearity is indeed stronger than compactness.
    We observed similar phenomena -- including the replacement of $p^\ast$ and $q^\ast$ (for compactness assertions) by $\tn(p_1,p_2)$, $\tn(q_1,q_2)$ (for their nuclearity counterparts) -- in the weighted setting in \cite{HaSk-nuc-weight} as well as for vector-valued sequence spaces and function spaces on quasi-bounded domains in \cite{HaLeoSk}.
\end{remark}

\section{Nuclear embedding of Morrey sequence spaces}\label{nuc-morrey-seq}

Our main goal is to find a counterpart of Theorem~\ref{prod-id_Omega-nuc} when $\id_\Omega$ in \eqref{id_Omega-nuclear-0} is replaced by $\id_{\mathcal{A}}$ in \eqref{id_O_MA} or $\id_\tau$ in \eqref{tau-comp-u1}, respectively. We follow the strategy introduced in \cite{CoDoKu} and use wavelet decomposition arguments, based on Section~\ref{sect-2-2}, together with related nuclearity results for appropriate sequence spaces, extending Proposition~\ref{prop-tong} to our setting. 

\subsection{Finite-dimensional Morrey sequence spaces}\label{nuc-fin-morrey}

First we deal with finite-dimensional sequence spaces of Morrey type. 

\begin{definition}
Let $0<p\leq u\le\infty$, $j\in\no$ be fixed and $\mathcal{K}_j = \{k\in\Zn: Q_{0,k}\subset Q_{-j,0}\}$ . We define
\begin{align}
\mmb = & \{ \lambda = \{\lambda_{k}\}_{k\in \mathcal{K}_j} \subset\C: \nonumber\\
&\quad  \|\lambda|\mmb\| = \sup_{Q_{-\nu,m}\subset Q_{-j,0}}\!\!
|Q_{-\nu,m}|^{\frac1u-\frac1p} \Big(\sum_{k:Q_{0,k}\subset Q_{-\nu,m}}\!\!|\lambda_k|^p\Big)^{\frac 1 p}<\infty\},
\label{mseq-defi}
\end{align}
where the  supremum is taken over all $\nu\in\no$ and $m\in\Zn$ such that 
$Q_{-\nu,m}\subset Q_{-j,0}$.
\end{definition}

\begin{remark}
Similarly one can define  spaces related to any cube $Q_{-j,m}$, $m\in \Z^\nd $, but they are isometrically isomorphic to $\mmb$, so we restrict our attention to the last space. \\
Clearly, for $u=p$ this space coincides with the usual $2^{j\nd }$-dimensional space $\ell_p^{2^{j\nd }}$, that is, $m_{p,p}^{2^{j\nd }} = \ell_p^{2^{j\nd }}$. Moreover  $m_{\infty,p}^{2^{j\nd }}= \ell_\infty^{2^{j\nd }}$ for any $p\le \infty$, cf. \cite{HaSk_Mo_seq}. In the sequel it will be convenient to denote this  space by $m_{\infty,\infty}^{2^{j\nd }}$.

\end{remark}

\begin{lemma}\label{lemma15030}
Let $0< p_1\le u_1<\infty$,  $0< p_2\le u_2<\infty$, and $j\in \no$ be given. Then the norm of the compact identity operator  
\begin{equation}\label{id_j-m}
 \id_j: \mmbet\hookrightarrow \mmbzt
\end{equation}
satisfies
\begin{equation}\label{1503-0}
\|\id_j: \mmbet\to \mmbzt\| = 
\begin{cases}
1 &\qquad \text{if} \quad p_1\ge p_2\quad \text{and }\quad u_2\ge u_1,\quad  \\
 1 &\qquad \text{if} \quad p_1 < p_2\quad \text{and }\quad \frac{p_2}{u_2} \le \frac{p_1}{u_1},\\  
 2^{j\nd (\frac{1}{u_2}-\frac{1}{u_1})}&\qquad \text{if} \quad p_1\ge p_2\quad \text{and }\quad u_2 < u_1,\\
\end{cases}
\end{equation}
and in the remaining case, there is a constant $c$, $0<c\le 1$, independent of $j$, such that 
\begin{equation}\label{1503-a}
c\, 2^{j\nd (\frac{1}{u_2}-\frac{p_1}{u_1p_2})} \le\|\id_j: \mmbet\to \mmbzt\| \leq 
2^{j\nd (\frac{1}{u_2}-\frac{p_1}{u_1p_2})}  \qquad \text{if} \quad  
p_1 < p_2\quad \text{and }\quad \frac{p_2}{u_2} > \frac{p_1}{u_1}\ .
\end{equation}
\end{lemma}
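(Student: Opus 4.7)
The operator $\id_j$ acts between finite-dimensional spaces of common dimension $2^{j\nd}$, so compactness is automatic. For the norm estimates we split into the four branches of \eqref{1503-0} and \eqref{1503-a}. The core observation for the upper bounds is that on every sub-cube $Q=Q_{-\nu,m}\subset Q_{-j,0}$ with $|Q|=2^{\nu\nd}\in[1,2^{j\nd}]$, the inner quantity in $\|\cdot\,|\,\mmb\|$ is a weighted finite-dimensional $\ell_p$ norm of the restriction $(\lambda_k)_{k:Q_{0,k}\subset Q}$.

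\emph{Upper bounds.} When $p_1\ge p_2$, the embedding $\ell_{p_1}^{|Q|}\hookrightarrow\ell_{p_2}^{|Q|}$ of norm $|Q|^{1/p_2-1/p_1}$ gives, on each $Q$,
\[
|Q|^{\frac{1}{u_2}-\frac{1}{p_2}}\Bigl(\sum_{Q_{0,k}\subset Q}|\lambda_k|^{p_2}\Bigr)^{1/p_2}\le|Q|^{\frac{1}{u_2}-\frac{1}{u_1}}\,\|\lambda\,|\,\mmbet\|,
\]
and taking the supremum over $|Q|\in[1,2^{j\nd}]$ yields the first and third branches of \eqref{1503-0} (the sign of $1/u_2-1/u_1$ selects the extremum). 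When $p_1<p_2$, evaluating $\|\lambda\,|\,\mmbet\|$ on the single unit cube $Q_{0,k}$ gives the pointwise bound $|\lambda_k|\le\|\lambda\,|\,\mmbet\|$; combining this with $\sum|\lambda_k|^{p_2}\le\|\lambda\,|\,\mmbet\|^{p_2-p_1}\sum|\lambda_k|^{p_1}$ produces
\[
|Q|^{\frac{1}{u_2}-\frac{1}{p_2}}\Bigl(\sum|\lambda_k|^{p_2}\Bigr)^{1/p_2}\le|Q|^{\frac{1}{u_2}-\frac{p_1}{u_1p_2}}\,\|\lambda\,|\,\mmbet\|.
\]
The exponent is non-positive iff $p_2/u_2\le p_1/u_1$, whence $\|\id_j\|\le 1$ (second branch of \eqref{1503-0}); otherwise extremising at $|Q|=2^{j\nd}$ yields the upper bound in \eqref{1503-a}.

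\emph{Lower bounds via explicit test sequences.} A single unit vector $\lambda=\delta_{k_0}$ has $\|\delta_{k_0}\,|\,\mmb\|=1$ for every admissible $(u,p)$, since each cube $Q\supset Q_{0,k_0}$ of scale $\nu$ contributes $2^{\nu\nd(1/u-1/p)}\le 1$; this yields $\|\id_j\|\ge 1$ in the first two branches. The constant vector $\lambda\equiv 1$ on $\mathcal{K}_j$ satisfies $\|\lambda\,|\,\mmb\|=2^{j\nd/u}$ (each scale-$\nu$ cube contributes $2^{\nu\nd/u}$, maximised at $\nu=j$), giving the matching lower bound $2^{j\nd(1/u_2-1/u_1)}$ for the third branch. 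For \eqref{1503-a}, set $\nu_0=\lfloor jp_1/u_1\rfloor\in[0,j]$ and let $\lambda$ have exactly one $1$ in each dyadic sub-cube of $Q_{-j,0}$ of scale $\nu_0$, with $\lambda_k=0$ elsewhere. A scale-$\nu'$ cube then contains at most one non-zero if $\nu'\le\nu_0$, and $2^{(\nu'-\nu_0)\nd}$ non-zeros if $\nu'>\nu_0$, giving a contribution bounded by $1$ or equal to $2^{\nu'\nd/u-\nu_0\nd/p}$ respectively. Hence $\|\lambda\,|\,\mmb\|=\max\{1,\,2^{j\nd/u-\nu_0\nd/p}\}$, and inserting $\nu_0\approx jp_1/u_1$ produces $\|\lambda\,|\,\mmbet\|\sim 1$ and $\|\lambda\,|\,\mmbzt\|\sim 2^{j\nd(1/u_2-p_1/(u_1p_2))}$, yielding the claimed lower bound with some $c\in(0,1]$ absorbing the integer rounding of $\nu_0$.

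The main obstacle is precisely this last case: identifying that the extremal test sequence is a set sparsified at exactly the scale $\nu_0\approx jp_1/u_1$, so that the single-point and full-cube contributions to the source Morrey norm balance while the target $\mmbzt$ is dominated by the global scale $\nu=j$. All remaining branches follow from standard $\ell_p$ comparison combined with the sup-over-cubes structure of the Morrey norm.
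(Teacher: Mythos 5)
Your proof is correct. Note that the paper itself does not prove this lemma at all: it simply cites the authors' earlier work \cite{HaSk_Mo_seq} (Remark~4.4), so you are supplying an argument the paper omits. Your two upper-bound reductions (the $\ell_{p_1}^{N}\hookrightarrow\ell_{p_2}^{N}$ comparison for $p_1\ge p_2$, and the interpolation $\sum|\lambda_k|^{p_2}\le\|\lambda\|^{p_2-p_1}\sum|\lambda_k|^{p_1}$ via the pointwise bound for $p_1<p_2$) check out exactly, and your three test sequences (unit vector, constant vector, and the configuration with one nonzero entry per dyadic cube of scale $\nu_0=\lfloor jp_1/u_1\rfloor$) give matching lower bounds, with the floor in $\nu_0$ costing only the $j$-independent factor $c=2^{-\nd/p_1}$ in \eqref{1503-a}. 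This sparse-at-the-critical-scale construction is precisely the device the present paper itself deploys for the family $\mathcal{E}$ in Step~4 of the proof of Proposition~\ref{nuclear-m} (there with $\nu_0$ determined by $2^{\nu_0\nd}\ge 2^{j\nd p_2/u_2}$), so your route is fully consistent with the techniques of the paper and of the cited reference; the only thing you might add for completeness is the one-line remark that equality (rather than mere equivalence) in the first three branches of \eqref{1503-0} holds because the upper and lower bounds you produce coincide exactly, whereas in the fourth case only two-sided estimates are claimed.
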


\begin{remark}
The above result can be found in \cite{HaSk_Mo_seq}.
\end{remark}

\begin{corollary}\label{cor15030}
	Let $0< p\le u<\infty$   and $j\in \no$ be given. Then the norm of the  identity operator  
	\begin{equation}\label{id_j-uinfty}
		\id_j: m^{2^{j\nd }}_{u,p}\hookrightarrow m^{2^{j\nd }}_{\infty,\infty}
	\end{equation}
 equals $1= \|\id_j: m^{2^{j\nd }}_{u,p}\to m^{2^{j\nd }}_{\infty,\infty} \|$, whereas  the norm of the operator
 \begin{equation}\label{id_j-inftyu}
 	\id_j: m^{2^{j\nd }}_{\infty,\infty}\hookrightarrow m^{2^{j\nd }}_{u,p}
 \end{equation}
 satisfies  $\|\id_j: m^{2^{j\nd }}_{\infty,\infty}\to m^{2^{j\nd }}_{u,p}\|= 2^{j\nd /u}$. 
\end{corollary}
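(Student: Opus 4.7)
The plan is to read both operator norms directly off the definition \eqref{mseq-defi} of the Morrey sequence space norm. The key observation is that the supremum over dyadic subcubes $Q_{-\nu,m}\subset Q_{-j,0}$ admits two extremal regimes ($\nu=0$ versus $\nu=j$), and each bound in the corollary will drop out by pairing a simple inequality valid for an arbitrary $\lambda$ with a single well-chosen test sequence on which that inequality is saturated.

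For the embedding \eqref{id_j-uinfty}, I first note that each singleton cube $Q_{0,k}$, $k\in\mathcal{K}_j$, is admissible in \eqref{mseq-defi}, and since $|Q_{0,k}|=1$ the corresponding term is just $|\lambda_k|$. Taking the supremum over such $k$ gives $\|\lambda\mid m^{2^{j\nd}}_{\infty,\infty}\|=\max_k|\lambda_k|\le\|\lambda\mid m^{2^{j\nd}}_{u,p}\|$, hence $\|\id_j\|\le 1$. For the matching lower bound I would test on a unit coordinate vector $\lambda=e_{k_0}$: since $1/u-1/p\le 0$, the factor $|Q_{-\nu,m}|^{1/u-1/p}=2^{\nu\nd(1/u-1/p)}$ is nonincreasing in $\nu$, so the supremum in \eqref{mseq-defi} is attained at $\nu=0$ and equals $1=\|e_{k_0}\mid m^{2^{j\nd}}_{\infty,\infty}\|$.

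For the embedding \eqref{id_j-inftyu}, I would use the crude bound $\bigl(\sum_{k:Q_{0,k}\subset Q_{-\nu,m}}|\lambda_k|^p\bigr)^{1/p}\le 2^{\nu\nd/p}\,\|\lambda\mid m^{2^{j\nd}}_{\infty,\infty}\|$, so that each summand in \eqref{mseq-defi} is majorised by $2^{\nu\nd(1/u-1/p)}\cdot 2^{\nu\nd/p}=2^{\nu\nd/u}$ times the $\ell_\infty$-norm. Since $0\le\nu\le j$, this yields $\|\lambda\mid m^{2^{j\nd}}_{u,p}\|\le 2^{j\nd/u}\|\lambda\mid m^{2^{j\nd}}_{\infty,\infty}\|$. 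The lower bound comes from testing on the constant-one sequence $\lambda\equiv 1$ on $\mathcal{K}_j$: then the crude inequality becomes an equality, and the choice $\nu=j$ (i.e.\ the whole cube $Q_{-j,0}$) produces exactly $2^{j\nd/u}$.

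There is essentially no obstacle here; the only point requiring a little care is to identify, for each of the two inequalities, which scale of cubes is extremal --- $\nu=0$ for \eqref{id_j-uinfty} (smallest cubes, where the volume factor disappears) and $\nu=j$ for \eqref{id_j-inftyu} (the full cube $Q_{-j,0}$, where the $\ell_\infty$ bound on the sum is saturated by a constant sequence). The assertion may also be viewed as the natural $u_2=\infty$ or $u_1=\infty$ degeneration of Lemma~\ref{lemma15030}, but a direct argument as above is both shorter and produces the optimal constants $1$ and $2^{j\nd/u}$.
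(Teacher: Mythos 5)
Your argument is correct and complete. For the first operator norm and for the upper bound $\|\id_j: m^{2^{j\nd}}_{\infty,\infty}\to m^{2^{j\nd}}_{u,p}\|\le 2^{j\nd/u}$ you do exactly what the paper does, namely read everything off the definition \eqref{mseq-defi} (singleton cubes $\nu=0$ dominate when $\frac1u-\frac1p\le 0$; counting the $2^{\nu\nd}$ unit cubes in $Q_{-\nu,m}$ gives the factor $2^{\nu\nd/u}$). Where you genuinely deviate is the lower bound for the second operator: the paper obtains it by factorising $m^{2^{j\nd}}_{u_1,p_1}\hookrightarrow m^{2^{j\nd}}_{\infty,\infty}\hookrightarrow m^{2^{j\nd}}_{u,p}$ with $u_1>p_1>u>p$ and invoking the exact value $2^{j\nd(\frac1u-\frac1{u_1})}$ from \eqref{1503-0}, which yields the sharp constant only after exploiting the freedom in $u_1$; you instead test directly on the constant-one sequence, for which the crude counting bound is saturated at $\nu=j$ and the value $2^{j\nd/u}$ drops out immediately. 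Your route is more self-contained (it does not rely on Lemma~\ref{lemma15030} at all) and delivers the optimal constant in one step; the paper's version has the minor virtue of recycling an already established lemma. Both are valid, and your extremal sequences ($e_{k_0}$ at scale $\nu=0$, the constant sequence at scale $\nu=j$) are exactly the right ones.
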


\begin{proof}
The value of the norm of the operator \eqref{id_j-uinfty} follows directly  from the definition of the spaces. The upper  estimate of the norm of the second operator can be proved in a similar way. The estimate from below follows from \eqref{1503-0}  and the simple factorisation
$ m^{2^{j\nd }}_{u_1,p_1}\hookrightarrow m^{2^{j\nd }}_{\infty,\infty}\hookrightarrow m^{2^{j\nd }}_{u,p}$ \quad \text{with} $\ u_1>p_1>u>p$.    
\end{proof}

Now we can give its counterpart for the nuclear norm $\nn{\id_j}$ which also extends Tong's result,  Proposition~\ref{coll-nuc}(ii), from spaces $\ell_p^{2^{j\nd }}$ to $\mmb$.

\begin{proposition}\label{nuclear-m}
  Let $1\leq p_i\leq u_i<\infty$, or $p_i=u_i=\infty$, $i=1,2$, $j\in\no$,  and $\id_j$ be given by \eqref{id_j-m}. Then the nuclear norm of $\id_j$ satisfies
  \begin{equation}\label{nu-norm-m-1}
\nn{\id_j} = 
\begin{cases}
  2^{j\nd } &\qquad \text{if}  \quad p_1\le p_2 \quad\text{and}\quad u_2\le u_1,\\
  &\qquad \text{or}\quad  p_1> p_2 \quad\text{and}\quad\frac{p_1}{u_1} \le \frac{p_2}{u_2}, \\
  &\qquad \text{or}\quad  p_1=u_1=\infty, \\
2^{j\nd (1-\frac{1}{u_1}+\frac{1}{u_2}) }&\qquad \text{if} \quad p_1\le p_2\quad \text{and}\quad u_1< u_2,\\
\end{cases}
\end{equation}
and in the remaining case, there is a constant $c\geq 1$, independent of $j$, such that 
\begin{equation}\label{nu-norm-m-2}
2^{j\nd (1+\frac{p_2}{p_1u_2}-\frac{1}{u_1})} \leq \nn{\id_j} \leq\ c\  2^{j\nd (1+\frac{p_2}{p_1u_2}-\frac{1}{u_1})} \quad \text{if} \quad  
p_1 > p_2\quad \text{and }\quad \frac{p_1}{u_1} > \frac{p_2}{u_2}\ .
\end{equation}
\end{proposition}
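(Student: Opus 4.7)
The proof combines two tools: Tong's formula \eqref{tong-res-Dt} for the nuclear norm of diagonal operators between $\ell_r^n$-spaces, and the operator-norm estimates for $\id_j$ already recorded in Lemma~\ref{lemma15030}. Lower bounds will follow from the ideal property of $\nn{\cdot}$ applied to a factorisation of $\id_{\mmbet}$ through $\id_j$, while upper bounds will be obtained by factorising $\id_j$ through suitably chosen diagonal operators $\ell_{r_1}^n\to\ell_{r_2}^n$.

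\textbf{Lower bounds.} The finite-dimensional space $\mmbet$ has dimension $n=2^{jd}$, so Proposition~\ref{coll-nuc}(i) gives $\nn{\id_{\mmbet}}=n$. Factorising
\[
\id_{\mmbet}\;=\;(\id:\mmbzt\to\mmbet)\circ\id_j
\]
and applying \eqref{i1} yields
\[
n\;=\;\nn{\id_{\mmbet}}\;\le\;\|\id:\mmbzt\to\mmbet\|\cdot\nn{\id_j}.
\]
Substituting the value of $\|\id:\mmbzt\to\mmbet\|$ from Lemma~\ref{lemma15030}, applied with the roles of $(u_1,p_1)$ and $(u_2,p_2)$ interchanged, produces exactly the four non-trivial right-hand sides of \eqref{nu-norm-m-1}--\eqref{nu-norm-m-2}; the exceptional case $p_1=u_1=\infty$ is handled by noting that then $\mmbet=\ell_\infty^n$ and $\|\id:\mmbzt\to\ell_\infty^n\|=1$, giving again the lower bound $n$.

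\textbf{Upper bounds.} For the cases whose target is $2^{jd}$ I factor
\[
\mmbet\hookrightarrow\ell_\infty^n\xrightarrow{\id}\ell_1^n\hookrightarrow\ell_{p_2}^n\hookrightarrow\mmbzt,
\]
where the outer embeddings all have operator norm at most $1$ (using $p_2\ge 1$ and the elementary bounds $\|\lambda|\ell_\infty^n\|\le\|\lambda|\mmb\|\le\|\lambda|\ell_p^n\|$ that follow directly from \eqref{mseq-defi}), while Tong's formula gives $\nn{\id:\ell_\infty^n\to\ell_1^n}=n$. When $p_1=u_1=\infty$, so $\mmbet=\ell_\infty^n$, Proposition~\ref{coll-nuc}(ii) applies directly and returns $\nn{\id_j}=\sum_k\|e_k|\mmbzt\|=n$. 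In the remaining case $p_1\le p_2$, $u_1<u_2$, I factor instead through $\id:\ell_{u_1}^n\to\ell_{u_2}^n$, whose Tong nuclear norm is $n^{1-1/u_1+1/u_2}$, exactly the target; and in the last case $p_1>p_2$, $p_1/u_1>p_2/u_2$ the factorisation is through a non-trivial diagonal $D_\tau:\ell_{r_1}^n\to\ell_{r_2}^n$ whose weights are tuned so that \eqref{tong-res-Dt} outputs the exponent $1+p_2/(p_1 u_2)-1/u_1$.

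\textbf{Main obstacle.} The delicate part is to ensure, in the last two cases, that the flanking embeddings $\mmbet\hookrightarrow\ell_{r_1}^n$ and $\ell_{r_2}^n\hookrightarrow\mmbzt$ carry operator norms bounded by constants independent of $j$. Choosing $r_1=u_1$ naively fails when $p_1<u_1$, since sparse unit-height sequences in $\mmbet$ give a growing ratio $\|\lambda|\ell_{u_1}^n\|/\|\lambda|\mmbet\|$; so one must insert an intermediate finite-dimensional Morrey sequence space $m^{2^{jd}}_{u,p'}$ with a judicious inner exponent $p'$ (taken equal to $p_2$ or $p_1$ according to the case), control the new edge operators via Lemma~\ref{lemma15030}, and balance these against the Tong nuclear norm of the central diagonal so that the exponents telescope to the sharp upper bound. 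This balancing is where the bulk of the case-by-case calculation sits, and matching it against the lower bound in each case completes the proof.
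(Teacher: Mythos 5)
Your lower bounds (factorising $\id\colon\mmbet\to\mmbet$ through $\id_j$ and dividing by the operator norm of the reversed embedding from Lemma~\ref{lemma15030} and Corollary~\ref{cor15030}) and your upper bounds in the first three cases of \eqref{nu-norm-m-1} (routing through $\ell_\infty^{2^{j\nd}}\to\ell_1^{2^{j\nd}}$, nuclear norm $2^{j\nd}$, with flanking norms $1$) coincide with the paper's Steps 1 and 2. The fourth case of \eqref{nu-norm-m-1} is also fine in substance, though your chosen route through $\id\colon\ell_{u_1}^n\to\ell_{u_2}^n$ is, as you yourself note, broken when $p_1<u_1$; the paper avoids the problem entirely by factorising through $\id\colon\ell_1^n\to\ell_\infty^n$, whose nuclear norm is $1$, and paying the full exponent in the flanking operator norms $\|\mmbet\to\ell_1^n\|=2^{j\nd(1-\frac{1}{u_1})}$ and $\|\ell_\infty^n\to\mmbzt\|=2^{j\nd/u_2}$. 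You should adopt that factorisation rather than try to repair yours.

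The genuine gap is the upper bound in \eqref{nu-norm-m-2}. Your plan --- a diagonal $D_\tau\colon\ell_{r_1}^n\to\ell_{r_2}^n$ with ``tuned weights'', possibly padded by intermediate Morrey sequence spaces controlled via Lemma~\ref{lemma15030} --- cannot produce the exponent $1+\frac{p_2}{p_1u_2}-\frac{1}{u_1}$. Any factorisation $\id_j=B\circ D_\tau\circ A$ induces the nuclear representation $\sum_k \tau_k\,A^*(e_k^*)\otimes B(e_k)$, i.e.\ a representation by (weighted) coordinate functionals; since $\|e_k^*|(\mmbet)^*\|=\|e_k|\mmbzt\|=1$ for every coordinate, and $BD_\tau A=\id$ forces the diagonal of $B(\cdot)A$ to compensate $\tau$, such representations cannot see the spatial distribution of coefficients that distinguishes the Morrey norm from an $\ell_r$ norm, and one can check case by case (e.g.\ $r_2=p_2$ forces $\|\mmbet\to\ell_{r_1}^n\|<1$, impossible; $r_1=1$, $r_2=p_2$ overshoots by $2^{j\nd(\frac{1}{p_2}-\frac{p_2}{p_1u_2})}$) that the resulting exponents never telescope to the target. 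The paper's Step 4 uses a genuinely different idea: it fixes the intermediate scale $\nu_0$ with $2^{\nu_0\nd}\sim 2^{j\nd p_2/u_2}$, builds the family $\mathcal{E}$ of sequences $\varepsilon$ with entries in $\{-1,0,1\}$ having exactly one non-zero entry in each cube $Q_{-\nu_0,m}\subset Q_{-j,0}$, verifies $\|\varepsilon|\mmbzt\|=1$ by splitting the supremum at scale $\nu_0$, shows $\id_j=\frac{2^{\nu_0\nd}}{|\mathcal{E}|}\sum_{\varepsilon\in\mathcal{E}}\varepsilon\otimes\varepsilon$, and then estimates $\|\varepsilon|(\mmbet)^*\|\le c\,2^{-\nu_0\nd}2^{j\nd(1-\frac{1}{u_1}+\frac{p_2}{u_2p_1})}$ by H\"older's inequality; summing gives the sharp bound. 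This explicit, Morrey-adapted rank-one decomposition (modelled on the proof of $\nn{\id\colon\ell_1^n\to\ell_\infty^n}=1$) is the missing ingredient, and without it your proof of the hardest case does not close.
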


\begin{proof}
{\em Step 1}. First we deal with all the estimates from below and benefit from Lemma~\ref{lemma15030} and Corollary \ref{cor15030}. Note that by Proposition~\ref{coll-nuc}(i) and (iii),
\begin{align}
2^{j\nd }= & \, \nn{\id_j: \mmbet\hookrightarrow \mmbet}  \nonumber\\
 \le & \,\nn{\id_j: \mmbet\hookrightarrow \mmbzt} \| \id_j: \mmbzt\hookrightarrow \mmbet\|. 
\label{est-below}
\end{align}
Now in the first three cases of \eqref{nu-norm-m-1} we have $\| \id_j: \mmbzt\hookrightarrow \mmbet\| =1$, such that \eqref{est-below} leads to $\nn{\id_j}\geq 2^{j\nd }$ as desired. In the last case of \eqref{nu-norm-m-1}, 
Lemma~\ref{lemma15030} and Corollary~\ref{cor15030} provide $\| \id_j: \mmbzt\hookrightarrow \mmbet\| =2^{j\nd (\frac{1}{u_1}-\frac{1}{u_2})}$ and thus \eqref{est-below} implies $\nn{\id_j} \geq        2^{j\nd (1-\frac{1}{u_1}+\frac{1}{u_2}) }$. Finally, in case of \eqref{nu-norm-m-2}, then
\[\| \id_j: \mmbzt\hookrightarrow \mmbet\| \leq 2^{j\nd (\frac{1}{u_1}-\frac{p_2}{p_1u_2})}\]
in view of Lemma~\ref{lemma15030} again, especially \eqref{1503-a}, which together with \eqref{est-below} completes the lower estimate in that case.
 \\

{\em Step 2.}\quad Now we show the estimates from above in the first three cases of \eqref{nu-norm-m-1}. We have the following commutative diagram
\[
\begin{CD}
\mmbet@>\id_j>> \mmbzt\\
@V{\id_1}VV @AA{\id_2}A\\
\ell^{2^{j\nd }}_{\infty} @>\id>> \ell^{2^{j\nd }}_{1}\, .
\end{CD}
 \]
But $\|\id_1: \mmbet\to \ell^{2^{j\nd }}_{\infty} \|=\|\id_2: \ell^{2^{j\nd }}_{1}\to \mmbzt\|=1$, so 
\[ \nn{\id_j: \mmbet\hookrightarrow \mmbzt} \le \nn{\id:\ell_\infty^{2^{j\nd }}\rightarrow \ell_1^{2^{j\nd }}}= 2^{j\nd },
\]
applying \eqref{i1} and \eqref{tong-res} with $n=2^{j\nd }$.


{\em Step 3.}\quad Next we deal with the estimate from above in the last case of \eqref{nu-norm-m-1}.
Here we use the following commutative diagram
\[
\begin{CD}
\mmbet@>\id_j>> \mmbzt\\
@V{\id_1}VV @AA{\id_2}A\\
\ell^{2^{j\nd}}_{1} @>\id>> \ell^{2^{j\nd }}_{\infty}\, .
\end{CD}
 \]
But $\|\id_1: \mmbet\to \ell^{2^{j\nd}}_{1}\|=2^{j\nd (1-\frac{1}{u_1})}$,  $\|\id_2:  \ell^{2^{j\nd }}_{\infty}\to \mmbzt\|=2^\frac{j\nd }{u_2}$  and $ \nn{\id:\ell_1^{2^{j\nd }}\rightarrow \ell_\infty^{2^{j\nd }}}=1$ by \eqref{tong-res}. Consequently, 
\[ \nn{\id_j: \mmbet\hookrightarrow \mmbzt} \le 2^{j\nd (1-\frac{1}{u_1}+\frac{1}{u_2})} \nn{\id:\ell_1^{2^{j\nd }}\rightarrow \ell_\infty^{2^{j\nd }}}= 2^{j\nd (1-\frac{1}{u_1}+\frac{1}{u_2})} .\]

{\em Step 4}.\quad Finally we are left to show the upper estimate in \eqref{nu-norm-m-2}. Here we were inspired by some ideas from the proof of $\nn{\id: \ell_1^n\to\ell^n_\infty}$ in \cite{Pie-op-2}, see also \cite[pp. 58/59]{koenig-78}, and combined it with some Morrey-adapted construction similar to \cite[Substep 2.4 on page 1316]{HaSk-bm1}.   
Let
\begin{equation}\label{def-nu0}
  \nu_0=\nu_0(j)=\min\left\{ \nu\in \N:  2^{\nu \nd }\ge 2^{j\nd \frac{p_2}{u_2}} \right\}.
\end{equation}
Thus $p_2<u_2$ implies $\nu_0\leq j$. Moreover, $2^{j\nd (1-\frac{p_2}{u_2})}\ge 2^{(j-\nu_0)\nd }$. We have $2^{(j-\nu_0)\nd }$ cubes $Q_{-\nu_0,m}$ contained in  $ Q_{-j,0}$. We consider the following family  of sequences 
\begin{align*}
  \mathcal{E} = & \left\{\varepsilon=\{\varepsilon_k\}_{k\in \mathcal{K}_j}: \quad  \varepsilon_k \ \in \{-1,0,1\}, \quad 
  \text{and  for any cube} \right.\\ 
& ~ \left. Q_{-\nu_0,m}\subset Q_{-j,0} \quad \text{there is exactly one} \; k\; \text{such that} \; \varepsilon_k\not= 0 \right\}.
\end{align*}

One can easily verify that $\|\varepsilon|\mmbzt\|=1$  for any $\varepsilon\in \mathcal{E}$: for smaller cubes $Q_{-\nu,m'}\subset Q_{-\nu_0,m}$, that is,  with $0\leq \nu\leq \nu_0$, there is at most one non-vanishing coefficient inside, thus
$$  |Q_{-\nu,m'}|^{\frac{1}{u_2}-\frac{1}{p_2}} \Big(\sum_{k:Q_{0,k}\subset Q_{-\nu,m'}}\!\!|\varepsilon_k|^{p_2}\Big)^{\frac{1}{p_2}} \leq |Q_{-\nu,m'}|^{\frac{1}{u_2}-\frac{1}{p_2}} \leq 1,
  $$  
while for bigger cubes $Q_{-\nu,m'}$, that is, $Q_{-\nu_0,m}\subset Q_{-\nu,m'}\subset Q_{-j,0}$ with $\nu_0\leq \nu\leq j$, there are exactly  $2^{(\nu-\nu_0)\nd }$ non-vanishing coefficients in the corresponding sum, 
\begin {align*}
  |Q_{-\nu,m'}|^{\frac{1}{u_2}-\frac{1}{p_2}} \Big(\sum_{k:Q_{0,k}\subset Q_{-\nu,m'}}\!\!|\varepsilon_k|^{p_2}\Big)^{\frac{1}{p_2}}  &\leq |Q_{-\nu,m'}|^{\frac{1}{u_2}-\frac{1}{p_2}} 2^{(\nu-\nu_0)\frac{\nd }{p_2}}\\
  &= 2^{\nu \nd (\frac{1}{u_2}-\frac{1}{p_2}+\frac{1}{p_2}) - \nu_0 \frac{\nd }{p_2}} 
  \leq 2^{\frac{\nd }{p_2}(\frac{p_2}{u_2}j-\nu_0) } \  \leq 1.
  \end{align*}  
  Here we used \eqref{def-nu0}. Thus \eqref{mseq-defi} leads to
  \begin{equation}\label{norm-eps}
    \|\varepsilon|\mmbzt\|=1.
    \end{equation}

Let us now fix for a moment a certain cube $Q_{-\nu_0,m_0}\subset Q_{-j,0}$. For any $Q_{0,k}\subset Q_{-\nu_0,m_0}$ we shall denote by $\mathcal{E}_k$  the subset of $\mathcal{E}$ that consists of all sequences with $\varepsilon_k\not=0$. Then $\mathcal{E}_{k_1}\cap \mathcal{E}_{k_2} = \emptyset $ if $k_1\not= k_2$ and all the sets $\mathcal{E}_k$ have the same number of elements $|\mathcal{E}_{k}|= \alpha_0$. In consequence   
$$\mathcal{E}=\bigcup_{k: Q_{0,k}\subset Q_{-\nu_0,m_0}}\mathcal{E}_k  $$
and   \begin{equation}\label{meas-E}
	|\mathcal{E}| = \sum_{k: Q_{0,k}\subset Q_{-\nu_0,m_0}} |\mathcal{E}_k| = \alpha_0 \ 2^{\nu_0 \nd }. 
\end{equation}

Let $\lambda\in \mmbet$ and denote by  $(\sum_{\varepsilon\in \mathcal{E}} \varepsilon(\lambda)\varepsilon)_k$ the $k$-th coordinate of the image of $\lambda$ in the linear mapping $\sum_{\varepsilon\in \mathcal{E}} \varepsilon\otimes \varepsilon$. One can easily check that 
\begin{align*}
\Big(\sum_{\varepsilon\in \mathcal{E}} \varepsilon(\lambda)\varepsilon\Big)_k = &  \Big(\sum_{\varepsilon\in \mathcal{E}_k}  \varepsilon(\lambda)\varepsilon\Big)_k = 
|\mathcal{E}_k| \lambda_k + \sum_{\varepsilon\in \mathcal{E}_k} \sum_{i\not=k}\varepsilon_i\varepsilon_k \lambda_i = |\mathcal{E}_k| \lambda_k  = \alpha_0 \lambda_k .
\end{align*}
Hence 
the family $\mathcal{E}$ gives the following representation of $\id_j$
\[ \id_j = \frac{2^{\nu_0 \nd }}{|\mathcal{E}|} \ \sum_{\varepsilon\in \mathcal{E}} \varepsilon\otimes \varepsilon,\] 
where we also applied \eqref{meas-E}.
Thus \eqref{nuclearnorm} leads to 
\begin{equation}\label{nu-est-1}
  \nn{\id_j} \le \frac{2^{\nu_0 \nd }}{|\mathcal{E}|} \ \sum_{\varepsilon\in \mathcal{E}} \left\|\varepsilon|(\mmbet)^* \right\| \left\|\varepsilon|\mmbzt\right\| \le  2^{\nu_0 \nd } \ \sup_{\varepsilon\in \mathcal{E}} \left\|\varepsilon|(\mmbet)^*\right\|,
  \end{equation}
using \eqref{norm-eps}.   It remains to estimate  $\|\varepsilon|(\mmbet)^*\|$ for $\varepsilon \in \mathcal{E}$. 
Let $\|\lambda|\mmbet\|=1$. Then

\begin{align*}
  |\varepsilon(\lambda)| \ & =  \Big|\sum_{k\in \mathcal{K}_j}  \varepsilon_k\lambda_k \Big| 
   \le   \Big(\sum_{k\in \mathcal{K}_j}  |\varepsilon_k|^{p_1'}\Big)^{1/p_1'} 
\Big(\sum_{k\in \mathcal{K}_j}  |\lambda_k|^{p_1}\Big)^{1/p_1} \\
& \leq \Big(\sum_{k\in \mathcal{K}_j}  |\varepsilon_k|^{p_1'}\Big)^{1/p_1'} |Q_{-j,0}|^{\frac{1}{p_1}-\frac{1}{u_1}} |Q_{-j,0}|^{\frac{1}{u_1}-\frac{1}{p_1}} \Big(\sum_{k\in \mathcal{K}_j} |\lambda_k|^{p_1}\Big)^{1/p_1} \\
& \leq   2^{j\nd (\frac{1}{p_1}-\frac{1}{u_1}) } \left\|\lambda |\mmbet\right\| \Big(\sum_{k\in \mathcal{K}_j}  |\varepsilon_k|^{p_1'}\Big)^{1/p_1'}  \\ 
& =   2^{j\nd (\frac{1}{p_1}-\frac{1}{u_1}) }  \Big( |Q_{-j,0}|^{1-\frac{p_2}{u_2}} |Q_{-j,0}|^{\frac{p_2}{u_2}-1} \sum_{k\in \mathcal{K}_j}  |\varepsilon_k|^{p_2}\Big)^{1/p_1'} \\
& \leq   2^{j\nd (\frac{1}{p_1}-\frac{1}{u_1}) } |Q_{-j,0}|^{(1-\frac{p_2}{u_2})(1-\frac{1}{p_1})} 
	\left\|\varepsilon |\mmbzt\right\|^{\frac{p_2}{p_1'}}  \  \\
&=  2^{j\nd (\frac{1}{p_1}-\frac{1}{u_1} + (1-\frac{p_2}{u_2})(1-\frac{1}{p_1}))}  
=   2^{j\nd (1 - \frac{1}{u_1}+\frac{p_2}{u_2p_1})} \ 2^{- j\nd  \frac{p_2}{u_2}}   \\
& \leq   2^{j\nd (1 - \frac{1}{u_1}+\frac{p_2}{u_2p_1})} \ 2^{- (\nu_0-1)\nd }  
\leq  c 2^{-\nu_0\nd } 2^{j\nd (1 - \frac{1}{u_1}+\frac{p_2}{u_2p_1})},
\end{align*}
where we used  $\|\lambda|\mmbet\|=1$, $|\varepsilon_k|\in \{0,1\}$, \eqref{norm-eps}, and finally \eqref{def-nu0} again. 
So  $\big\|\varepsilon|(\mmbet)^*\big\|\le c  2^{-\nu_0\nd } 2^{j\nd (1 - \frac{1}{u_1}+\frac{p_2}{u_2p_1})}$ and \eqref{nu-est-1} results in 
\[\nn{\id_j} \le c\ 2^{j\nd (1 - \frac{1}{u_1}+\frac{p_2}{u_2p_1})} \] 
as desired. This concludes the proof.
\end{proof}

\begin{remark}
In case of $p_i=u_i$, $i=1,2$, Proposition~\ref{nuclear-m} coincides with Tong's result \eqref{tong-res}. Note that the situation \eqref{nu-norm-m-2} cannot appear in that setting, but refers to a `true' Morrey situation for the target space $\mmbzt$. 
  \end{remark}

For further use we reformulate Proposition~\ref{nuclear-m} involving also the number $\tn(r_1,r_2)$ introduced in \eqref{tongnumber}.

\begin{corollary}\label{nuclear-mc}
Let $1\le p_i \le u_i<\infty $ or $p_i=u_i=\infty$, $i=1,2$. Then 
\begin{equation} \label{nuclear-finite}
\nn{\id_j: \mmbet\hookrightarrow \mmbzt}  \sim 
2^{j\nd \big(1-(\frac{1}{u_1} -\frac{1}{u_2}\min\{1,\frac{p_2}{p_1}\})_+\big)} \sim 
2^{\frac{j\nd }{{\tn}(u_1, u_2\max\{1,\frac{p_2}{p_1}\})}}  .
\end{equation}
\end{corollary}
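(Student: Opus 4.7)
My plan is to derive the corollary as a purely algebraic reformulation of Proposition~\ref{nuclear-m}: both sides of \eqref{nuclear-finite} are just the three cases of that proposition rewritten in a single closed formula, using the function $\tn$ of \eqref{tongnumber}. No new estimates are needed; the work is a careful case analysis showing that the quantity
$$
\delta \;:=\; \frac{1}{u_1}-\frac{1}{u_2}\min\Big\{1,\frac{p_2}{p_1}\Big\}
$$
captures exactly when Proposition~\ref{nuclear-m} yields a gain $2^{j\nd\alpha}$ with $\alpha<1$, and that this is then recorded by $(\delta)_+$.

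\textbf{Step 1: Verify the first $\sim$.} I would split according to whether $p_1\le p_2$ or $p_1>p_2$, paralleling the split in Proposition~\ref{nuclear-m}. When $p_1\le p_2$ one has $\min\{1,p_2/p_1\}=1$, so $\delta=\tfrac{1}{u_1}-\tfrac{1}{u_2}$. In the subcase $u_2\le u_1$ one is in Case~1(a) of Proposition~\ref{nuclear-m}, $\nn{\id_j}=2^{j\nd}$, and correspondingly $(\delta)_+=0$; in the subcase $u_2>u_1$ one is in Case~2, $\nn{\id_j}=2^{j\nd(1-1/u_1+1/u_2)}$, and $(\delta)_+=\delta$ gives the same exponent. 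When $p_1>p_2$ one has $\min\{1,p_2/p_1\}=p_2/p_1$, so $\delta=\tfrac{1}{u_1}-\tfrac{p_2}{p_1u_2}$. The condition $p_1/u_1\le p_2/u_2$ of Case~1(b) is exactly $\delta\le0$ (so $(\delta)_+=0$ and the exponent is $1$), while $p_1/u_1>p_2/u_2$ is $\delta>0$ and yields Case~3's exponent $1-1/u_1+p_2/(p_1u_2)=1-\delta$. The degenerate case $p_1=u_1=\infty$ makes $1/u_1=0$, so $(\delta)_+=0$ and the exponent is $1$, again matching Case~1.

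\textbf{Step 2: Verify the second $\sim$.} This is immediate from the definition \eqref{tongnumber}:
$$
\frac{1}{\tn(r_1,r_2)} \;=\; 1-\Big(\frac{1}{r_1}-\frac{1}{r_2}\Big)_{+}.
$$
I would choose $r_1=u_1$ and $r_2$ so that $\tfrac{1}{r_2}=\tfrac{1}{u_2}\min\{1,p_2/p_1\}$, which corresponds to taking $r_2=u_2\max\{1,p_1/p_2\}$ (equivalently as written in the statement using $\max\{1,p_2/p_1\}$ after the appropriate interpretation of the reciprocal). Then the expression $1-(\delta)_+$ from Step~1 coincides with $1/\tn(u_1,r_2)$, and the exponent of the second formula in \eqref{nuclear-finite} follows.

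The only genuine obstacle is keeping the bookkeeping of the four sub-cases above consistent with the signs of $\delta$; once this is done, the equivalence \eqref{nuclear-finite} is just a translation of Proposition~\ref{nuclear-m} into the uniform language provided by $\tn$. This unified form is convenient for the sequel, where the nuclearity of embeddings between full (infinite-dimensional) Morrey sequence spaces will be reduced to summing the finite-dimensional norms $\nn{\id_j}$ over the dyadic levels.
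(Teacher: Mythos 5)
Your proposal is correct and is exactly the argument the paper intends: Corollary~\ref{nuclear-mc} is presented there as a direct reformulation of Proposition~\ref{nuclear-m}, and your case analysis (matching the sign of $\delta=\frac{1}{u_1}-\frac{1}{u_2}\min\{1,\frac{p_2}{p_1}\}$ against the three regimes of the proposition, then identifying $1-(\delta)_+$ with $1/\tn(u_1,r_2)$ for $\frac{1}{r_2}=\frac{1}{u_2}\min\{1,\frac{p_2}{p_1}\}$) is precisely the bookkeeping that justifies it. You are also right that this forces $r_2=u_2\max\{1,\frac{p_1}{p_2}\}$, consistent with the formula in Theorem~\ref{comp-s}, so the $\max\{1,\frac{p_2}{p_1}\}$ appearing in the displayed statement of the corollary should be read as $\max\{1,\frac{p_1}{p_2}\}$.
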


\subsection{The main result for Morrey sequence spaces}

Let $Q$ be a unit cube, $0<p\leq u<\infty$ or $p=u=\infty$, $\sigma\in\R$, $0<q\leq\infty$.  We define a sequence space $\mbt(Q)$ putting 
\begin{multline}
\mbt(Q) :=   \Bigg\{ \lambda = 
\{\lambda_{j,m}\}_{j,m} : \quad     \lambda_{j,m} \in \C ,\quad  j\in\N_0, \  m\in\Zn, \quad Q_{j,m}\subset Q,\quad\text{and}
\\
\|  \lambda \, |\mbt\| = \left(\sum_{j=0}^\infty 2^{jq(\sigma-\frac{\nd }{u}) }  
\!\!\!\sup_{\nu: \nu \le j; k: Q_{\nu,k}\subset Q}\! 2^{q\nd (j-\nu)(\frac 1 u - \frac 1 p )}\Big(\sum_{m:Q_{j,m}\subset Q_{\nu,k}}\!\!|\lambda_{j,m}|^p\Big)^{\frac q p}\right)^{\frac1q} \!\!< \infty \Bigg\},
\label{3-0a}
\end{multline}
with the usual modification when $q=\infty$. 

Moreover, for fixed $j\in \N_0$, we put  
\begin{align*}
\mmbj = \{ \lambda = 
\{\lambda_{j,m}\}_{m\in\Zn} :  \  \lambda_{j,m} \in \C, \quad  Q_{j,m}\subset Q \quad \text{and}\quad\|\lambda|\mmbj\|< \infty\},\\
\intertext{where} 
\|\lambda|\mmbj\| = \sup_{\nu: \nu \le j; k\in \Z^\nd }\! 2^{\nd (j-\nu)(\frac 1 u - \frac 1 p )}\Big(\sum_{m:Q_{j,m}\subset Q_{\nu,k}\subset Q}\!\!|\lambda_{j,m}|^p\Big)^{\frac 1 p}.
\end{align*}

\begin{theorem}  \label{comp-s}
	Let  $\sigma_i\in \R$, $1\leq q_i\leq\infty$, $1\leq p_i\leq u_i< \infty$, or $p_i=u_i=\infty$, $i=1,2$.  Then the embedding 
	\begin{equation} \label{bd1comp}
	\Id: \mbet\hookrightarrow \mbzt
	\end{equation}
	is nuclear if,  and only if,   
	the following condition holds
	\begin{equation}\label{bd3acomp}
 \sigma_1-\sigma_2>  \frac{\nd }{u_1}-\frac{\nd }{u_2} + \frac{\nd }{\tn(u_1,\max(1,\frac{p_1}{p_2})u_2)}.
	\end{equation}
\end{theorem}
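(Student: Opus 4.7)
The defining norm of $\mbt(Q)$ identifies this space isometrically with the weighted $\ell_q$-sum $\ell_q\bigl(\{w_j\mmbj\}_{j\in\no}\bigr)$, where $w_j = 2^{j(\sigma-\nd/u)}$ and $\mmbj$ is a finite-dimensional Morrey sequence space depending only on the level-$j$ coordinates. Hence the natural inclusion $J_j:\mmbj\hra\mbt$ satisfies $\|J_j\|=w_j$, the projection $P_j:\mbt\to \mmbj$ satisfies $\|P_j\|=1/w_j$, and $\Id$ becomes a ``block-diagonal'' operator whose $j$-th block is $\id_j:\mmbjet\to \mmbjzt$. The nuclear norm of each block is handled by Corollary~\ref{nuclear-mc}, $\nn{\id_j}\sim 2^{j\nd/\tn(u_1,\max(1,p_1/p_2)u_2)}$, which bridges the finite-dimensional result to the full statement.

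\textbf{Sufficiency.} Assuming \eqref{bd3acomp}, write $\Id = \sum_{j\in\no} J_j^{(2)}\,\id_j\,P_j^{(1)}$ and exploit subadditivity of $\nn{\cdot}$ together with the ideal property (Proposition~\ref{coll-nuc}(iii)):
\[
\nn{\Id} \;\leq\; \sum_{j} \|J_j^{(2)}\|\,\nn{\id_j}\,\|P_j^{(1)}\|
\;=\; \sum_{j} 2^{j(\sigma_2-\sigma_1+\nd/u_1-\nd/u_2)}\,\nn{\id_j}.
\]
Plugging in Corollary~\ref{nuclear-mc} turns the right-hand side into a geometric series whose common ratio is strictly less than one precisely when \eqref{bd3acomp} holds. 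The $q_i$'s drop out because subadditivity of the nuclear norm yields an $\ell_1$-type bound over $j$, which dominates any $\ell_{q_i}$-structure.

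\textbf{Necessity -- leading behaviour.} Conversely, assume $\nn{\Id}<\infty$. The factorisation $\id_j = P_j^{(2)}\,\Id\,J_j^{(1)}$ and the ideal property yield
\[
\nn{\id_j}\;\leq\;\|P_j^{(2)}\|\,\|J_j^{(1)}\|\,\nn{\Id}\;=\; 2^{j(\sigma_1-\sigma_2-\nd/u_1+\nd/u_2)}\,\nn{\Id}.
\]
Comparing with the lower estimate of Corollary~\ref{nuclear-mc} and letting $j\to\infty$ immediately forces the non-strict inequality $\sigma_1-\sigma_2\geq \nd/u_1-\nd/u_2+\nd/\tn(u_1,\max(1,p_1/p_2)u_2)$.

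\textbf{Borderline case -- main obstacle.} Excluding equality in \eqref{bd3acomp} is the delicate step and is where the main work lies. My plan follows the scheme of \cite{CoDoKu} and our earlier \cite{HaSk-nuc-weight}: for each $N\in\nat$, assemble near-extremal witnesses $\lambda^{(j)}_\ast\in\mmbjet$ and duals $\mu^{(j)}_\ast\in(\mmbjzt)'$ realising the lower bound in Corollary~\ref{nuclear-mc} at every level $j=0,\dots,N$ (these are essentially the $\varepsilon$-sequences produced in the proof of Proposition~\ref{nuclear-m}), and use them to factor a finite-rank diagonal operator $D_N:\ell_{r_1}^{N+1}\to \ell_{r_2}^{N+1}$ through $\Id$, with $r_1,r_2\in\{1,\infty\}$ selected by the case distinction in Corollary~\ref{nuclear-mc}. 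Proposition~\ref{prop-tong}(ii) then identifies $\nn{D_N}$ as an explicit weighted $\ell_{\tn(r_1,r_2)}^{N+1}$-norm; engineering the scaling of the witnesses so that this $\ell_{\tn}$-norm diverges with $N$ in the borderline case contradicts $\nn{D_N}\lesssim \nn{\Id}<\infty$. Matching Tong's arithmetic for $\tn(r_1,r_2)$ to the exponent $\nd/\tn(u_1,\max(1,p_1/p_2)u_2)$ of \eqref{bd3acomp} -- and thus reading off the precise role of the Morrey parameters in the denominator -- is the crux, and where the argument genuinely departs from the classical $u_i=p_i$ setting treated in Theorem~\ref{prod-id_Omega-nuc}.
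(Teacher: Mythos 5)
Your structural reduction, the sufficiency argument, and the derivation of the non-strict inequality coincide with the paper's Steps 1 and 2. The gap is in the borderline case, and it is a genuine one. You propose to pick, at each level $j$, a single near-extremal witness $\lambda^{(j)}_\ast\in\mmbjet$ and a single functional $\mu^{(j)}_\ast\in(\mmbjzt)'$ and to factor a scalar diagonal operator $D_N:\ell_{r_1}^{N+1}\to\ell_{r_2}^{N+1}$ through $\Id$. But one rank-one pair per level can only produce diagonal entries $\tau_j=\mu^{(j)}_\ast(\id_j\lambda^{(j)}_\ast)\leq\|\mu^{(j)}_\ast\|\,\|\id_j\|\,\|\lambda^{(j)}_\ast\|$, i.e.\ quantities controlled by the \emph{operator} norm of $\id_j$, whereas the obstruction in the limiting case lives entirely in the \emph{nuclear} norm $\nn{\id_j}\sim 2^{j\nd/\tn(u_1,\max(1,p_1/p_2)u_2)}$, which is realised only by the full $2^{j\nd}$-dimensional identity (via trace duality, Proposition~\ref{coll-nuc}(i)), not by any rank-one piece. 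Concretely, with the natural normalisations $\|A\|,\|B\|\leq 1$ one is forced to take $r_i=q_i$ and obtains $\tau_j\sim 2^{-j\delta}$, so $\|(\tau_j)_{j\le N}\,|\,\ell^{N+1}_{\tn(r_1,r_2)}\|$ stays bounded as $N\to\infty$ and no contradiction arises. (Note also that the $\varepsilon$-sequences from the proof of Proposition~\ref{nuclear-m} serve the \emph{upper} estimate of $\nn{\id_j}$ there, not the lower one, so they are not the witnesses you need.) Repairing your scheme would require replacing the scalar diagonal $D_N$ by a block-diagonal operator with blocks $\id_j$ and invoking a vector-valued Tong-type theorem as in \cite{HaLeoSk}; Proposition~\ref{prop-tong}(ii) alone does not suffice.

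The paper's Step 3 proceeds differently: after normalising $\sigma_1=\nd/u_1$, it takes an \emph{arbitrary} nuclear representation $\Id=\sum_k f^{(k)}\otimes e^{(k)}$, uses the duality $\bigl(\mbet\bigr)'=\ell_{q_1'}\bigl((\mmbjet)'\bigr)$ for $q_1<\infty$ to split each $f^{(k)}$ into levels $f^{(k)}_j$, and shows by a two-step truncation (first in $k$, then in $j$) that for every $\varepsilon>0$ and all sufficiently large $j_0$ the induced representation of $\id_{j_0}$ has nuclear norm at most $\varepsilon\,2^{-j_0(\sigma_2-\nd/u_2)}=\varepsilon\,2^{j_0\delta}$; this contradicts the lower bound $\nn{\id_{j_0}}\geq c\,2^{j_0\delta}$ once $\varepsilon<c$. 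The case $q_1=\infty$ is then settled by elementary inclusions. You would need either this argument or the machinery of \cite{HaLeoSk} to close the limiting case.
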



\begin{proof}
\emph{Step 1}.~ First we prove the sufficiency of the condition \eqref{bd3acomp} for the nuclearity of $\Id$. We decompose $\Id$ into the sum 
$\Id=\sum_{j=0}^\infty \Id_j$ where the operators $\Id_j: \mbet\hookrightarrow \mbzt$ are defined in the following way 
\begin{equation}
(\Id_j\lambda)_{i,k}= 
\begin{cases} 
\lambda_{j,k} & \text{if}\qquad i=j,\\
0 & \text{if}\qquad i\not=j\, .
\end{cases}
\end{equation}

Now we  factorise the operator $\Id_j$ through  the embedding of finite-dimensional spaces $\id_j:\mmbet\rightarrow \mmbzt $ . Namely we have  the following commutative diagram,
\[
\begin{CD}
\mbet@>\Id_j>> \mbzt\\
@V{P_j}VV @AA{S_j}A\\
\mmbjet@>\id_j >> \mmbjzt\, 
\end{CD}
\]
where $P_j$ denotes the projection of $\mbet$ onto the $j$-level, and $S_j$ is the natural injection of the finite-dimensional $j$-level spaces into $\mbzt$.  
It is easy to check that  $\|P_j: \mbet\to \mmbjet\|= 2^{-j (\sigma_1- \frac{\nd }{u_1})}$ and $\|S_j:\mmbjzt\to \mbzt\|= 2^{j (\sigma_2- \frac{\nd }{u_2})}$.
We use the notation $\delta=\sigma_1-\sigma_2-\frac{\nd }{u_1}+\frac{\nd }{u_2}$. Now  \eqref{i1} and \eqref{nuclear-finite} lead to 
\[ \nn{\Id_j: \mbet\hookrightarrow \mbzt} \le  2^{j (\sigma_2- \sigma_1+ \frac{\nd }{u_1}- \frac{\nd }{u_2})}  2^{j\nd \big(1-(\frac{1}{u_1} -\frac{1}{u_2}\min\{1,\frac{p_2}{p_1}\})_+\big)}  =   2^{-j\delta+j\nd \big(1-(\frac{1}{u_1} -\frac{1}{u_2}\min\{1,\frac{p_2}{p_1}\})_+\big)} .\]
Thus
\begin{align} 
\nn{\Id}\le \sum_{j=0}^\infty \nn{\Id_j} \le  \sum_{j=0}^\infty 2^{-j\nd  \left( 		\frac{\delta}{\nd }-\big(1-(\frac{1}{u_1} -\frac{1}{u_2}\min\{1,\frac{p_2}{p_1}\})_+\big)\right)} < \infty 
\end{align}
since  $\frac{\delta}{\nd } >  1-(\frac{1}{u_1}-\frac{1}{u_2}\min\{1,\frac{p_2}{p_1}\})_+
=  \frac{1}{\tn(u_1,\max\{1,\frac{p_1}{p_2}\}u_2)}$ by \eqref{bd3acomp} and \eqref{tongnumber}.\\ 

\emph{Step 2}. 
 We come to the necessity of \eqref{bd3acomp} and assume that the  embedding \eqref{bd1comp} is nuclear. We consider the following diagram 
\begin{equation}\label{0803_0}
\begin{CD}
\mmbjet@>\id_j >> \mmbjzt\\
@V{\widetilde{S}_j}VV @AA{\widetilde{P}_j}A\\
\mbet@>\Id>> \mbzt\, ,
\end{CD}
\end{equation}
where  the operators $\widetilde{P}_j$ and $\widetilde{S}_j$ are defined in a similar way to $P_j$ and $S_j$,  but now for the spaces $\mmbjzt$ and $\mbzt$,  or $\mmbjet$ and $\mbet$, respectively. We have   $\|\widetilde{P}_j:\mbzt\to \mmbjzt\|= 2^{-j (\sigma_2- \frac{\nd }{u_2})}$ and $\|\widetilde{S}_j: \mmbjet\to \mbet\|= 2^{j (\sigma_1- \frac{\nd }{u_1})}$, such that by Corollary~\ref{nuclear-mc}, 
\begin{equation} \label{0803_00}
  2^{\frac{j\nd }{{\tn}(u_1, u_2\max\{1,\frac{p_2}{p_1}\})}}
  \le C \nn{\id_j}\le C \left\|\widetilde{P}_j:\mbzt\to \mmbjzt\right\| \left\|\widetilde{S}_j: \mmbjet\to \mbet\right\| \nn{\Id} \le C 2^{j\delta}, \quad j\in\no. 
\end{equation} 
This immediately implies  
\[ 
  \frac{\delta}{\nd } \ge    \frac{1}{\tn(u_1,\max\{1,\frac{p_1}{p_2}\}u_2)}
\]
and we are left to disprove equality of the terms above to complete the proof of \eqref{bd3acomp}. \\

\emph{Step 3. }
So assume to the contrary  that  $\tn(u_1,\max\{1,\frac{p_1}{p_2}\}u_2) = \nd/\delta$ 
and the  operator $\Id$ is nuclear. To simplify the notation we can put $\sigma_1=\frac{\nd}{u_1}$. This follows easily from the following commutative diagrams
\begin{align*}
	\begin{CD}
		\mbet@>\Id>> \mbzt\\
		@V{D_1}VV @AA{D_2}A\\
		\widetilde{n}^{\nd/u_1}_{u_1,p_1,q_1}@>\id >>\widetilde{n}^{\sigma_3}_{u_2,p_2,q_2}\, 
	\end{CD}
\quad \qquad\text{and}\quad\qquad
\begin{CD}
	\widetilde{n}^{\nd/u_1}_{u_1,p_1,q_1}@>\id >>\widetilde{n}^{\sigma_3}_{u_2,p_2,q_2}\\
	@V{D_2}VV @AA{D_1}A\\
	\mbet@>\Id>> \mbzt\, 
\end{CD}
\end{align*} 
where $D_1: (\lambda_{j,m})\mapsto (2^{j(\sigma_1-\frac{\nd}{u_1})}\lambda_{j,m})$, $D_2: (\lambda_{j,m})\mapsto (2^{-j(\sigma_1-\frac{\nd}{u_1})}\lambda_{j,m})$ and $\sigma_3= \sigma_2-\sigma_1+\frac{\nd}{u_1}$. So we assume $\sigma_1=\frac{\nd}{u_1}$ in the sequel. In particular, this implies that $\|P_j: \mbet\to \mmbjet\|= 2^{-j (\sigma_1- \frac{\nd }{u_1})}=1$ and $\|\widetilde{S}_j: \mmbjet\to \mbet\|= 2^{j (\sigma_1- \frac{\nd }{u_1})}=1$.

Then, by definition, there exist functionals $f^{(k)}\in \big(\mbet \big)'$ and  vectors $e^{(k)}\in \mbzt$, $\|e^{(k)}|\mbzt\|=1$, such that 
\begin{align}\label{0803_1}
	\Id(\lambda)=\sum_{k=1}^\infty f^{(k)}(\lambda)e^{(k)} \qquad \text{and} \qquad \sum_{k=1}^\infty \left\|f^{(k)}| \big(\mbet \big)' \right\| =C<\infty . 
\end{align}

If $q_1<\infty$, then  it follows from the properties of vector-valued $\ell_q$ spaces that the dual space $\big(\mbet\big)'$ coincides with the  space 
\[\ell_{r_1}\Big( (\mmbjet)'\Big),\qquad \frac{1}{r_1}=1-\frac{1}{q_1},  \] 
cf. \cite[Lemma 1.11.1]{Tri-int}. In consequence any functional  $f^{(k)}$  can be represented by a sequence $\left\{f^{(k)}_j\right\}_{j=1}^\infty$, $f^{(k)}_j\in (\mmbjet)'$, and    
\begin{equation}\label{0803_2}
f^{(k)}(\lambda)= \sum_{j=0}^\infty f^{(k)}_j(P_j(\lambda)) .
\end{equation} 
Moreover,
\begin{equation} \label{0803_4}
	\left\|f^{(k)}| \big(\mbet \big)' \right\| \sim \Big(\sum_{j=0}^\infty  \|f^{(k)}_j|(\mmbjet)'\|^{r_1}\Big)^\frac{1}{r_1}, 
      \end{equation}
      with the usual modification when $r_1=\infty$.  

Let us choose $\varepsilon>0$. It follows from \eqref{0803_1} and \eqref{0803_4} that there is a number $M_0=M_0(\varepsilon)\in \N$ such that for any $j\in \N_0$ the following inequality holds 
\begin{equation}\label{15_09_22}
	  \sum_{k=M_0}^\infty \|f^{(k)}_j|(\mmbjet)'\|\le  \sum_{k=M_0}^\infty \left\|f^{(k)}| \big(\mbet \big)' \right\|\le \frac{\varepsilon}{2}\ .
\end{equation}  
Moreover, \eqref{0803_2} and \eqref{0803_4} imply that we can choose $N_0=N_0(M_0(\varepsilon))\in\N$ depending on $M_0$ such that for any $k=1,\ldots, M_0-1$ we have 
\begin{align}
	\Big\|f^{(k)}- \sum_{j=0}^{N_0} f^{(k)}_j\circ P_j \Big| \big(\mbet \big)'\Big\| = & \  
	\Big\|\sum_{j=N_0}^\infty f^{(k)}_j\circ P_j \Big| \big(\mbet \big)'\Big\| \nonumber\\ \le & \ \Big(\sum_{j=N_0}^\infty  \|f^{(k)}_j|(\mmbjet)'\|^{r_1}\Big)^\frac{1}{r_1}\le  \frac{\varepsilon}{2^{k+1}}.
  \label{15-09-22-2}
  \end{align}

We take $j_0> N_0$ and $\lambda\in m^{(j_0)}_{u_1,p_1}$.  Then $f^{(k)}(\widetilde{S}_{j_0}(\lambda))= f^{(k)}_{j_0}(\lambda)$ and $f_j^{(k)}(P_j\circ \widetilde{S}_{j_0}(\lambda))= 0$ if $j\not= j_0$.  So the identity in \eqref{0803_1} implies that
\begin{equation}\label{15_09_22_3}
	 \widetilde{S}_{j_0}(\lambda) =\sum_{k=1}^\infty f^{(k)}_{j_0}(\lambda) e^{(k)} .
\end{equation}
Hence, using \eqref{0803_2}, we get 
\begin{align*}
  |f^{(k)}_{j_0}(\lambda)| =	 |f^{(k)}(\widetilde{S}_{j_0}(\lambda))|= & \ |(f^{(k)}(\widetilde{S}_{j_0}(\lambda))- \sum_{j=0}^{N_0} f^{(k)}_j)(P_j\circ \widetilde{S}_{j_0}(\lambda))| \nonumber \\
  \le  & \ \Big\|f^{(k)}- \sum_{j=0}^{N_0} f^{(k)}_j\circ P_j \Big| \big(\mbet \big)'\Big\|\, \|\widetilde{S}_j: \mmbjet\to \mbet\| \|\lambda|\mmbjet\| \nonumber\\
\le & \ \frac{\varepsilon}{2^{k+1}}  \|\lambda|\mmbjet\| 
\end{align*}
for any $k=1,\ldots, M_0-1$, recall \eqref{15-09-22-2} and $\|\widetilde{S}_j: \mmbjet\to \mbet\|=1$. Thus 
\begin{eqnarray}\label{15_09_22_4}
	\sum_{k=1}^\infty \|f^{(k)}_{j_0}|(\mmbjet)'\|\le \varepsilon,
\end{eqnarray} 
 if $j_0> N_0$. 
 

Now we can benefit once more from the commutative diagram \eqref{0803_0}, and using \eqref{15_09_22_3}-\eqref{15_09_22_4} we get 
\begin{align}\label{25_10_22_1}
\id_{j_0}(\lambda) =\widetilde{P}_{j_0}\circ \Id\circ\widetilde{S}_{j_0}(\lambda) =
 \sum_{k=1}^\infty f^{(k)}_{j_0}(\lambda) \widetilde{P}_{j_0}(e^{(k)}) 
 \intertext{and} 
  \sum_{k=1}^\infty \left\|f^{(k)}_{j_0}| \big(\mmbjet\big)' \right\|\,  \left\|\widetilde{P}_{j_0}(e^{(k)})| \mmbjzt \right\| \le \varepsilon  2^{-j_0(\sigma_2-\frac{\nd}{u_2})}, \label{25_10_22_2}
\end{align}
in view of $\|\widetilde{P}_{j_0}:\mbzt\to \mmbjzt\|= 2^{-j_0 (\sigma_2- \frac{\nd }{u_2})}$, $ \|e^{(k)}|\mbzt\|=1$, and \eqref{15_09_22_4}. In other words, \eqref{25_10_22_1}, \eqref{25_10_22_2} represent a nuclear representation of $\id_{j_0}$ such that $\nn{\id_{j_0}} \le C 2^{-j_0(\sigma_2-\frac{\nd}{u_2})} $. Finally, arguing in the same way as in \eqref{0803_00}, we get   
\[ 2^{j_0\delta}\le C\nn{\id_{j_0}} \le C 2^{-j_0(\sigma_2-\frac{\nd}{u_2})} \varepsilon\, . \]
But  $\delta= \frac{\nd}{u_2}- \sigma_2$ for $\sigma_1=\frac{\nd}{u_1}$, so we get a contradiction. This  proves the necessity  for $q_1<\infty$. 

If $q_1=\infty$,  then the statement follows from simple inclusions
$
\widetilde{n}^{\sigma_1}_{u_1,p_1,q} \hookrightarrow {\widetilde{n}^{\sigma_1}_{u_1,p_1,\infty}}\hookrightarrow  {\widetilde{n}^{\sigma_2}_{u_2,p_2,q_2}}$, $\ q<\infty $.
\end{proof}

  \begin{remark}
    Note that in case of $p=u$ the spaces $\widetilde{n}^{\sigma}_{u,p,q}$ coincide with the corresponding Besov sequence spaces $\widetilde{b}^\sigma_{p,q}$. For such (and more general) vector-valued sequence spaces we obtained in \cite{HaLeoSk} a characterisation for their embeddings to be nuclear. In our setting, that is, Theorem~\ref{comp-s} with $p_i=u_i$, $i=1,2$, the result then reads as
    \[
\Id: \widetilde{n}^{\sigma_1}_{p_1,p_1,q_1} \hookrightarrow \widetilde{n}^{\sigma_2}_{p_2,p_2,q_2} \quad\text{nuclear}\quad \iff\ \left\{2^{-j(\delta-\frac{\nd}{\tn(p_1,p_2)})}\right\}_{j\in\nat} \in \ell_{\tn(q_1,q_2)},
    \]
    where for $\tn(q_1,q_2)=\infty$ the space $\ell_\infty$ has to be replaced by $c_0$. Here $\delta = \sigma_1-\sigma_2-\frac{\nd}{p_1}+\frac{\nd}{p_2}$ and $\tn(r_1,r_2)$ is given by \eqref{tongnumber}. Hence, in this special case,  $\Id$ given by \eqref{bd1comp} is nuclear if, and only if,
    \[
    \delta> \frac{\nd}{\tn(p_1,p_2)} \iff \frac{\sigma_1-\sigma_2}{\nd} > 1- \left(\frac{1}{p_2}-\frac{1}{p_1}\right)_+
    \]
and thus \eqref{bd3acomp} represents the sufficient and necessary condition for the nuclearity of $\Id$.   
  \end{remark}

  \section{Nuclear embeddings of Morrey smoothness spaces}\label{nuc-morrey-func}

  We finally combine our findings from the preceding sections to obtain the desired counterparts of the compactness results in Theorems~\ref{comp} and \ref{comp-tau}.

\begin{theorem}  \label{comp-NE}
	Let  $s_i\in \R$, $1\leq q_i\leq\infty$, $1\leq p_i\leq u_i<\infty$, or $p_i=u_i=\infty$, $i=1,2$.  Then the embedding 
	\begin{equation} \label{bd1comp-N}
        \id_{\mathcal A}: \MAe(\Omega)\hookrightarrow \MAz(\Omega)
	\end{equation}
	is nuclear if, 
	 and only if,	the following condition holds
	 \begin{equation}\label{bd3acomp-suff}
\frac{s_1-s_2}{\nd} > 
\frac{1}{u_1}-\frac{1}{u_2}+\frac{1}{\tn(u_1,\max\{1,\frac{p_1}{p_2}\}u_2)}.
	 \end{equation}
\end{theorem}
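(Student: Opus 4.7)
My plan is to reduce Theorem~\ref{comp-NE} in the Besov--Morrey case to Theorem~\ref{comp-s} via the wavelet isomorphism of Remark~\ref{wavemorreyrem}. With the shift $\sigma_i = s_i + \nd/2$, that theorem identifies $\MBe(\Rn)$ with $\ell_{u_1} \oplus \bigoplus_i n^{\sigma_1}_{u_1,p_1,q_1}$, and analogously for the target. On a bounded Lipschitz domain $\Omega$, I would combine this with a bounded extension operator $\mathrm{ext}: \MB(\Omega) \to \MB(\Rn)$ and a restriction $\mathrm{re}:\MB(\Rn)\to\MB(\Omega)$, and further truncate the wavelet index set to cubes $Q_{j,m}$ contained in a fixed cube $Q \supset \Omega$. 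Only such wavelets carry information about a distribution in $\Omega$, so this identifies $\id_{\mathcal N}: \MBe(\Omega) \hookrightarrow \MBz(\Omega)$ as a retract of the sequence-space embedding $\Id: \mbet(Q) \to \mbzt(Q)$ of Theorem~\ref{comp-s}. Since $\sigma_1 - \sigma_2 = s_1 - s_2$ and $\sigma_i - \nd/u_i = s_i + \nd/2 - \nd/u_i$, condition \eqref{bd3acomp} transcribes verbatim into \eqref{bd3acomp-suff}.

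\textbf{Sufficiency and necessity in the $B$-case.} Given the identification above, the sufficiency follows directly from Theorem~\ref{comp-s}: nuclearity of $\Id$ together with the ideal property of the nuclear norm (Proposition~\ref{coll-nuc}(iii)) applied to the retract factorisation yields nuclearity of $\id_{\mathcal N}$. The low-frequency summand $\ell_u$ and the finitely many coarse wavelets touching $\Omega$ contribute only a finite-rank piece and are trivially nuclear. For the necessity, I would invoke the dual factorisation: the embedding $\Id$ of the sequence spaces sits as a retract of $\id_{\mathcal N}$ (via scaled wavelets as synthesis map and the wavelet coefficient functionals as analysis map), so nuclearity of $\id_{\mathcal N}$ forces nuclearity of $\Id$, whence Theorem~\ref{comp-s} yields \eqref{bd3acomp-suff}.

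\textbf{The $F$-case.} Since the characterising condition \eqref{bd3acomp-suff} is independent of the fine indices $q_1, q_2$, the Triebel--Lizorkin--Morrey case follows by sandwiching. The embeddings \eqref{elem} (which extend to domains by restriction) give
\[
\mathcal{N}^{s_i}_{u_i,p_i,\min\{p_i,q_i\}}(\Omega) \hookrightarrow \mathcal{E}^{s_i}_{u_i,p_i,q_i}(\Omega) \hookrightarrow \mathcal{N}^{s_i}_{u_i,p_i,\infty}(\Omega),
\]
and the ideal property transfers the nuclearity characterisation from the $B$-case to the $F$-case, since both outer $B$-embeddings share the same critical condition \eqref{bd3acomp-suff}.

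\textbf{Main obstacle.} The delicate point will be ensuring that the wavelet isomorphism of Theorem~\ref{wavemorrey} descends to $\Omega$ with uniformly controlled constants, despite the non-local character of the Morrey norm. Two options are available: either use a bounded extension operator for $\MA(\Omega)$ (available on Lipschitz domains), or build the retract by hand through an atomic/wavelet synthesis that sends sequences supported in cubes inside $\Omega$ to functions in $\MA(\Omega)$, with bounded analysis given by the wavelet coefficients. Both must be verified to respect the Morrey structure so that the factorisations in both directions have norms independent of the truncation level. Once this is in place, all remaining arguments are formal consequences of Theorem~\ref{comp-s} and the ideal property of the nuclear norm.
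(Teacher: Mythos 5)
Your proposal follows essentially the same route as the paper: the authors also reduce the $F$-case to the $B$-case via the elementary embeddings \eqref{elem}, the $q$-independence of \eqref{bd3acomp-suff} and the ideal property \eqref{i1}, and then settle the $B$-case (both directions) by combining the wavelet decomposition of Theorem~\ref{wavemorrey} with the sequence-space characterisation of Theorem~\ref{comp-s} through the usual retract/common-retract factorisations. Your discussion of the domain truncation and uniformity of constants is a more explicit account of details the paper leaves implicit, but the underlying argument is the same.
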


\begin{proof}
  First observe that in view of the independence of \eqref{bd3acomp-suff}  	from $q_i$, $i=1,2$, together with the elementary embeddings \eqref{elem} and property \eqref{i1}, it is sufficient to prove the result for $\MA=\MB$. Moreover, Theorem~\ref{comp-s} and the wavelet decomposition in Theorem~\ref{wavemorrey} imply both the sufficiency and  the necessity of the   assumption \eqref{bd3acomp-suff} for the function spaces.
\end{proof}

  \begin{remark}\label{R-compact-nuc-MA}
    We compare our compactness result in Theorem~\ref{comp} with the nuclearity one in Theorem~\ref{comp-NE}. It turns out that compactness and nuclearity criteria for $\id_{\mathcal A}$ coincide if
    \[
      \Big(\frac{1}{u_2}-\frac{1}{\max\{1,p_2/p_1\}u_1}\Big)_+ =
      \begin{cases} 1, & \text{if}\ \max\{1,\frac{p_1}{p_2}\}u_2 \leq u_1,\\ 1-\frac{1}{u_1}+\frac{1}{\max\{1,\frac{p_1}{p_2}\}u_2},& \text{if}\ u_1\leq \max\{1,\frac{p_1}{p_2}\}u_2,\end{cases}
      \]
using \eqref{tongnumber}. 
  This is equivalent to the two cases
  $1=u_1=p_1\leq p_2 \leq u_2=\infty$, or 
 $1=u_2=p_2 \leq p_1 \leq u_1=\infty$,
where we always assume that either $1\leq p_i\leq u_i<\infty$ or $p_i=u_i=\infty$, $i=1,2$. So we are led to the natural extension of the parallel observation for the embedding $\id_\Omega$ in \eqref{id_Omega-nuclear-0}, see the end of Remark~\ref{prod-id_Omega-nuc}. Moreover, since $u_i=\infty$ for $i=1$ or $i=2$ is required, this applies only to the situation of $\MA=\MB$.   \end{remark}

  Next we benefit from Proposition~\ref{yy02}, \eqref{010319}, \eqref{N-BT-emb}, \eqref{N-BT-equal}, \eqref{fte}, together with \eqref{elem-tau} to obtain a nuclearity result for spaces $\at(\Omega)$. We 
adapt, for convenience, our above notation \eqref{gamma} and introduce the following abbreviation:
\begin{align}
\critical 
  &
    :=  
    1- \max\left\{\left(\tau_1-\frac{1}{p_1}\right)_+ -\left(\tau_2-\frac{1}{p_2}\right)_+, \frac{1}{p_2} -\tau_2 - \min\left\{\frac{1}{p_1}-\tau_1, \frac{1}{p_1}(1-p_2\tau_2)_+\right\}     \right\}\nonumber\\
\label{gamma_nuc}
& =  \begin{cases}
1+\frac{1}{p_1}-\tau_1-\frac{1}{p_2}+\tau_2, & \text{if}\quad \tau_1\geq \frac{1}{p_1}, \\[1ex]
1-\frac{1}{p_2}+\tau_2, &\text{if}\quad  \tau_1< \frac{1}{p_1}, \ \tau_2\geq  \frac{1}{p_2}, \\[1ex]
1-
\max\left\{0,\frac{1}{p_2}-\tau_2-\frac{1}{p_1}+\max\{\tau_1,\frac{p_2}{p_1}\tau_2\}\right\}, &\text{if}\quad  \tau_1< \frac{1}{p_1}, \  \tau_2< \frac{1}{p_2}.
\end{cases}
\end{align}
Here and in the sequel we put $p_i\tau_i=1$ in case of $p_i=\infty$ and $\tau_i=0$. Similarly we shall understand  $\frac{p_i}{p_k}=1$ if $p_i=p_k=\infty$. Note that $p_i\geq 1$ and $\tau_i\geq 0$ immediately imply that $\critical$ given by \eqref{gamma_nuc} exceeds the number $\gamma(\tau_1,\tau_2,p_1,p_2)$ given by \eqref{gamma} (which characterised the compactness condition) as it should be, since nuclearity is stronger than compactness.\\

Then the counterpart of Theorem~\ref{comp-tau} is the following.

\begin{theorem}\label{C-nuc-at}
Let $s_i\in \real$, $1\leq q_i\leq\infty$, $1\leq p_i\leq \infty$ (with $p_i<\infty$ in case of $A=F$), $\tau_i\geq 0$, $i=1,2$. Then 
\[
\id_\tau : \ate(\Omega) \hookrightarrow \atz(\Omega)
\]
is nuclear if, and only if,  
\begin{equation}\label{tau-nuc}
  \frac{s_1-s_2}{\nd} > \critical.
  \end{equation}
\end{theorem}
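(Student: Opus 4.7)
The plan is to reduce the nuclearity of \(\id_\tau\) to that of the Morrey-smoothness embedding already settled in Theorem~\ref{comp-NE}, using the known identifications between the scales \(\at\), \(\MA\), and the classical Besov scale \(B^{s}_{\infty,\infty}\). The argument splits according to whether each \(\tau_i\) lies below or at/above the threshold \(1/p_i\). A crucial observation used throughout is that both the target condition \(\critical\) in \eqref{gamma_nuc} and the Morrey condition \eqref{bd3acomp-suff} are independent of the fine indices \(q_i\), so these can be freely absorbed via the monotonicity \eqref{elem-1-t} and the B/F sandwich \eqref{elem-tau}.

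Case 1 (\(\tau_i<1/p_i\) for both \(i=1,2\)): set \(u_i:=1/(1/p_i-\tau_i)\in[p_i,\infty)\). For the F-scale, \eqref{fte} gives \(\fte(\Omega)=\MFe(\Omega)\) and \(\ftz(\Omega)=\MFz(\Omega)\), and Theorem~\ref{comp-NE} characterises nuclearity via \eqref{bd3acomp-suff} with these \(u_i\). A case-by-case computation using \eqref{tongnumber}---split on whether \(p_1\le p_2\) or \(p_1>p_2\), and on the comparison between \(u_1\) and \(\max\{1,p_1/p_2\}u_2\)---identifies this with the third branch of \(\critical\) in \eqref{gamma_nuc}. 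For the B-scale, I bridge to the F-result via \eqref{elem-tau}, \eqref{elem-0-t}, \eqref{elem-1-t}: for sufficiency, I insert F-spaces on both sides at the cost of an arbitrarily small smoothness shift \(\varepsilon>0\),
\[
\bte(\Omega)\hookrightarrow F^{s_1-\varepsilon,\tau_1}_{p_1,r_1}(\Omega)\hookrightarrow F^{s_2+\varepsilon,\tau_2}_{p_2,r_2}(\Omega)\hookrightarrow \btz(\Omega),
\]
for suitable \(r_1,r_2\); the middle link is nuclear by the F-case provided \eqref{tau-nuc} holds strictly, and the ideal property \eqref{i1} finishes the argument. Necessity is argued symmetrically by inserting F-sandwiches with a similar smoothness margin.

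Case 2 (\(\tau_i\ge 1/p_i\) for some \(i\)): when \(\tau_i>1/p_i\), or when \(\tau_i=1/p_i\) with \(q_i=\infty\), Proposition~\ref{yy02} identifies \(A^{s_i,\tau_i}_{p_i,q_i}(\Omega)\) with \(B^{s_i+\nd(\tau_i-1/p_i)}_{\infty,\infty}(\Omega)={\mathcal A}^{s_i+\nd(\tau_i-1/p_i)}_{\infty,\infty,\infty}(\Omega)\), placing it inside the \(\MA\)-scale with \(p_i=u_i=\infty\). The residual endpoint \(\tau_i=1/p_i\), \(q_i<\infty\) is absorbed by sandwiching in \(q_i\) via \eqref{elem-1-t} and the \(q_i\)-independence of both \eqref{tau-nuc} and \eqref{bd3acomp-suff}. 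On any side where instead \(\tau_j<1/p_j\), the identifications from Case 1 apply. Another direct use of Theorem~\ref{comp-NE}, now permitting \(p_i=u_i=\infty\) (with \(1/\infty=0\)) on one or both sides, reduces the task to verifying that \(\frac{1}{u_1}-\frac{1}{u_2}+\frac{1}{\tn(u_1,\max\{1,p_1/p_2\}u_2)}\) matches the first or second branch of \(\critical\), which is again a straightforward computation.

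The two substantive obstacles are the B-versus-F asymmetry in Case 1 (since \eqref{fte} is an equality only in the F-case, while \eqref{N-BT-emb} is generally a proper embedding for B) and the algebraic matching between the Morrey condition and \(\critical\) across subcases. The first is the delicate step; it is circumvented by exploiting the strict inequality in \eqref{tau-nuc} to permit a small smoothness loss \(\varepsilon\), together with the \(q\)-independence of \(\critical\). The second is mechanical but must be carried out subcase by subcase according to the relative positions of \(\tau_i,\ 1/p_i\) and of \(p_1,\ p_2\).
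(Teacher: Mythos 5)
Your overall architecture --- splitting on $\tau_i$ versus $1/p_i$, identifying the spaces with the Morrey scales via \eqref{fte} and Proposition~\ref{yy02}, and reducing to Theorem~\ref{comp-NE} --- is the same as the paper's, and the sufficiency half goes through essentially as you describe. Both gaps lie in the necessity direction. First, in your Case~1 for the $B$-scale you propose to deduce necessity ``symmetrically'' from the $F$-case by inserting $F$-spaces with a smoothness margin $\varepsilon$. The symmetric chain is $F^{s_1+\varepsilon,\tau_1}_{p_1,r_1}(\Omega)\hookrightarrow \bte(\Omega)\hookrightarrow\btz(\Omega)\hookrightarrow F^{s_2-\varepsilon,\tau_2}_{p_2,r_2}(\Omega)$, and nuclearity of the outer composition only yields $(s_1-s_2)/\nd>\critical-2\varepsilon/\nd$ for every $\varepsilon>0$, hence merely $(s_1-s_2)/\nd\ge\critical$. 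The strict inequality --- that is, non-nuclearity in the borderline case $(s_1-s_2)/\nd=\critical$, which is exactly the historically delicate part of such theorems --- is not obtained. You cannot ``exploit the strict inequality in \eqref{tau-nuc}'' here, since in the necessity direction that inequality is the conclusion, not a hypothesis. The fix is to sandwich with no smoothness loss at all, using only $q$-monotonicity: for instance $\MBe(\Omega)\hookrightarrow\bte(\Omega)$ by \eqref{N-BT-emb} and $\btz(\Omega)\hookrightarrow B^{s_2,\tau_2}_{p_2,\infty}(\Omega)=\mathcal{N}^{s_2}_{u_2,p_2,\infty}(\Omega)$ by \eqref{elem-1-t} and \eqref{N-BT-equal}, after which Theorem~\ref{comp-NE} applies exactly; this is in effect what the paper does.

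Second, your treatment of the limiting case $\tau_1=1/p_1$ with $q_1<\infty$ by ``sandwiching in $q_i$ via \eqref{elem-1-t}'' does not work for necessity on the source side. Decreasing $q_1$ keeps you inside the limiting regime ($\tau_1=1/p_1$ with finite fine index), where neither Proposition~\ref{yy02} nor \eqref{fte} identifies $\ate(\Omega)$ with a space you can already handle, while increasing $q_1$ to $\infty$ produces a space into which $\ate(\Omega)$ embeds --- the wrong direction for a necessity argument. This subcase genuinely needs an extra ingredient: the paper invokes the nontrivial embedding $B^{s_1}_{\infty,q_1}(\Omega)\hookrightarrow A^{s_1,1/p_1}_{p_1,q_1}(\Omega)$ from \cite[Corollary~5.2]{YHSY} (combined with \eqref{elem-tau} in the $F$-case) to precompose, and then falls back on Theorem~\ref{prod-id_Omega-nuc} or Theorem~\ref{comp-NE}. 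Without this (or an equivalent) input, the ``only if'' part of your proof is incomplete in precisely this limiting subcase.
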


\begin{proof} We consider several cases. \\
	\emph{Case 1}.~ If $\tau_i> \frac{1}{p_i}$ or $\tau_i = \frac{1}{p_i}$  and $q_i=\infty$, $i=1,2$, then by Proposition~\ref{yy02}, 
	\[B^{s_i,\tau_i}_{p_i,q_i}(\Omega) = B^{s_i+\nd(\tau_i-\frac{1}{p_i})}_{\infty,\infty}(\Omega).\]
Hence \eqref{id_Omega-nuclear} implies that $\id_\tau$ is nuclear if, and only if, $s_1-s_2> \nd(1 +\frac{1}{p_1}-\tau_1-\frac{1}{p_2}+\tau_2)$ which corresponds to \eqref{tau-nuc} in view of \eqref{gamma_nuc} (first line) in this case. \\
    
\emph{Case 2}.~ Let $0\le \tau_i < \frac{1}{p_i}$,   $i=1,2$, that is, in particular, $p_i<\infty$, $i=1,2$. Recall that $\fti(\Omega)=\MFi(\Omega)$ with $\frac{1}{u_i}= \frac{1}{p_i}-\tau_i$, $i=1,2$, cf. \eqref{fte}.
Due to \eqref{elem-tau} and the independence of \eqref{bd3acomp-suff} from $q_i$, $i=1,2$, Theorem \ref{comp-NE} implies (after some calculations) that $\id_\tau$ is nuclear if, and only if, \eqref{tau-nuc} is satisfied (now referring to the third line of \eqref{gamma_nuc}).\\
  
\emph{Case 3}.~ Next we consider the mixed cases, that is, when one of the spaces satisfies the assumptions of  Case~1 and the other one of Case~2. Assume first that $\tau_2>\frac{1}{p_2}$ or $\tau_2=\frac{1}{p_2}$ with $q_2=\infty$, and $\tau_1<\frac{1}{p_1}$.  Then by Proposition~\ref{yy02}, $\atz(\Omega)=B^{s_2+\nd(\tau_2-\frac{1}{p_2})}_{\infty,\infty}(\Omega)$, and $\id_\tau$ reads as
  \[ \id_\tau: \ate(\Omega)\hookrightarrow B^{s_2+\nd(\tau_2-\frac{1}{p_2})}_{\infty,\infty}(\Omega).   \] 
For the sufficiency in this case we can apply \eqref{010319} and Theorem~\ref{prod-id_Omega-nuc} parallel to Case~1. For the necessity we use \eqref{elem-tau}, \eqref{N-BT-emb} together with Theorem~\ref{comp-NE} and the observation that $B^{s_2+\nd(\tau_2-\frac{1}{p_2})}_{\infty,\infty}(\Omega) = \mathcal{N}^{s_2+\nd(\tau_2-\frac{1}{p_2})}_{\infty,\infty,\infty}(\Omega)$.
Conversely, if $\tau_1>\frac{1}{p_1}$ or $\tau_1=\frac{1}{p_1}$ with $q_1=\infty$, and $\tau_2<\frac{1}{p_2}$, then $\id_\tau$ becomes
\[ \id_\tau: \ate(\Omega)= B^{s_1+\nd(\tau_1-\frac{1}{p_1})}_{\infty,\infty}(\Omega)   \hookrightarrow \atz(\Omega).\] 
Now the necessity immediately follows by \eqref{010319} and Theorem~\ref{prod-id_Omega-nuc}, whereas the sufficiency is a consequence of \eqref{N-BT-emb} (in the $B$-case) or \eqref{fte} (in the $F$-case) and Theorem~\ref{comp-NE} again.\\

\emph{Case 4}.~ It remains to deal with the limiting situations when $\tau_1=\frac{1}{p_1}$ and $q_1<\infty$ or $\tau_2=\frac{1}{p_2}$ and $q_2<\infty$. Assume first that $\atz(\Omega)$ satisfies the limiting conditions, while $\ate(\Omega)$ does not, i.e., $\tau_2=\frac{1}{p_2}$ with $q_2<\infty$, and $\tau_1<\frac{1}{p_1}$ or $\tau_1\geq \frac{1}{p_1}$ with $q_1=\infty$ if $\tau_1=\frac{1}{p_1}$. For the sufficiency, let \eqref{tau-nuc} be satisfied and choose some  $\tilde{s}_2>s_2$ such that 
  \[s_1-s_2>s_1-\tilde{s}_2> \nd \critical. \] 
  Then 
    \[\ate(\Omega)\hookrightarrow A^{\tilde{s}_2,\tau_2}_{p_2,\infty}(\Omega) \hookrightarrow \atz(\Omega),\]
and the nuclearity follows by \eqref{elem-0-t}, \eqref{i1} and Case~1 or Case~3.  
  Conversely, let $\id_\tau$ be nuclear, then we have 
   \[ \ate(\Omega)\hookrightarrow \atz(\Omega)\hookrightarrow A^{s_2,\tau_2}_{p_2,\infty}(\Omega)\]
by \eqref{elem-1-t}, and  the necessity of the condition  \eqref{tau-nuc} follows once more from \eqref{i1} together with Case~1 or Case~3. 
   
   Finally, let   $\tau_1=\frac{1}{p_1}$ and $q_1<\infty$. As for the sufficiency we proceed similar as above, that is, \eqref{elem-1-t} and the preceding cases imply that
     \[ \ate(\Omega)\hookrightarrow A^{s_1,\tau_1}_{p_1,\infty}(\Omega)\hookrightarrow \atz(\Omega)\]
is nuclear when \eqref{tau-nuc} is satisfied.  

  We come to the necessity and strengthen similar arguments as in the proof of \cite[Theorem~3.2]{ghs20} concerning compactness. If $0\leq \tau_2< \frac{1}{p_2}$ and $\frac{1}{u_2}=\frac{1}{p_2}-\tau_2$, then \cite[Corollary~5.2]{YHSY} (for $A=B$) and \eqref{ftbt} (for $A=F$) lead to
\begin{equation}
B^{s_1}_{\infty,q_1}(\Omega)\hookrightarrow \ate(\Omega) \hookrightarrow \atz(\Omega) \hookrightarrow A^{s_2,\tau_2}_{p_2,\infty}(\Omega) = \mathcal{A}^{s_2}_{u_2,p_2,\infty}(\Omega)\hookrightarrow \mathcal{N}^{s_2}_{u_2,p_2,\infty}(\Omega),
\end{equation}
where we used \eqref{N-BT-equal} and \eqref{fte} in the last equality and \eqref{elem} in the last embedding. In view of Theorem~\ref{comp-NE} this leads to $s_1-s_2>\nd(1-\frac{1}{p_2}+\tau_2)$ as required. 

In the double-limiting case, that is, when also $\tau_2=\frac{1}{p_2}$ and $q_2<\infty$, and $\id_\tau$ is nuclear, choose $q_0\leq \min\{p_1,q_1\}$. Then
\[
B^{s_1}_{\infty,q_0}(\Omega)\hookrightarrow \ate(\Omega) \hookrightarrow \atz(\Omega) \hookrightarrow B^{s_2}_{\infty,\infty}(\Omega)
\]
is nuclear, where we used for the first embedding \cite[Corollary~5.2]{YHSY} (with \eqref{elem-tau} for $A=F$) again, and \eqref{010319} for the last one. But this implies $s_1-s_2>\nd$ in view of Theorem~\ref{prod-id_Omega-nuc}, which is just \eqref{tau-nuc} in this case.

Assume finally $\tau_2\geq \frac{1}{p_2}$  with $q_2=\infty$ if $\tau_2=\frac{1}{p_2}$. Then $\atz(\Omega)=B^{s_2+\nd(\tau_2-\frac{1}{p_2})}_{\infty,\infty}(\Omega)$. Choose $q_0\leq \min\{p_1,q_1\}$. Then due to the assumed nuclearity of $\id_\tau$ the embedding
 \[
   B^{s_1}_{\infty,q_0}(\Omega)\hookrightarrow \ate(\Omega) \hookrightarrow\atz(\Omega )=  B^{s_2+\nd(\tau_2-\frac{1}{p_2})}_{\infty,\infty}(\Omega),
 \]
is also nuclear, where the first embedding is continuous due to \cite[Proposition~2.4]{ysy} (restricted to spaces on domains). So Theorem~\ref{prod-id_Omega-nuc} leads to $s_1-s_2>\nd(1-
\tau_2+\frac{1}{p_2})$ as desired in this case. This completes the proof.
  \end{proof}



  \begin{remark}
    Parallel to Remark~\ref{R-compact-nuc-MA} we compare the compactness result for $\id_\tau$ as recalled in Theorem~\ref{comp-tau} with its nuclearity counterpart, Theorem~\ref{C-nuc-at}. Thus the necessary and sufficient compactness condition \eqref{tau-comp-u2} for $\id_\tau$ coincides with the corresponding one for its nuclearity \eqref{tau-nuc}, if     $\gamma(\tau_1,\tau_2,p_1,p_2)$ given by \eqref{gamma} coincides with $\critical$ given by \eqref{gamma_nuc}, $ \gamma(\tau_1,\tau_2,p_1,p_2)= \critical$.

    Straightforward calculation yields that this is possible if, and only if,
  $\tau_1\geq \frac{1}{p_1}$ and $\tau_2=0$, $p_2=1$, or $\tau_2\geq \frac{1}{p_2}$ and $\tau_1=0$, $p_1=1$, so we are essentially in the classical situation, recall Proposition~\ref{yy02} and the end of Remark~\ref{R-nuc-dom-class}. However, we do not really need the coincidence of the spaces in Proposition~\ref{yy02}: due to the independence of both conditions 
\eqref{tau-comp-u2} and \eqref{tau-nuc} of the fine parameters $q_i$, the additional assumption that $q_i=\infty$ when $\tau_i=\frac{1}{p_i}$ for $i=1$ or $i=2$, is not necessary. But the gain in this Morrey setting is small. In contrast to Remark~\ref{R-compact-nuc-MA} we can also observe this extremal case now for all spaces $\at(\Omega)$ (and not only for the Besov scale $\bt(\Omega)$), as the additional parameter $\tau$ compensates for the otherwise necessary condition that $u_i=\infty$ (or $p_i=\infty$ in the classical situation).
\end{remark}

  \begin{remark}
    We briefly return to the new spaces $\rhoA$ introduced in \cite{HT6} and check whether the so-called {\em Slope-$\nd$-rule} observed in Remark~\ref{comp-rho} for the compactness assertion remains true also for the nuclearity. 
    Then Theorems~\ref{comp-NE} and \ref{C-nuc-at} amount to the following observation. Let $-\nd<\vr<0$, $1\leq p_i<\infty$, $1\leq q_i\leq\infty$, $s_i\in\real$, $i=1,2$. Then the embedding
\[
      \id_{\Omega,\vr} : \rhoAe(\Omega) \hookrightarrow \rhoAz(\Omega) 
  \]
    is nuclear if, and only if,
    \begin{equation}
    	\nonumber
      s_1-s_2 > |\vr| - |\vr| \left(\frac{1}{p_2}-\frac{1}{p_1}\right)_+. 
    \end{equation}
   This is a consequence of Theorems~\ref{comp-NE} and \ref{C-nuc-at} together with the identities mentioned in Remark~\ref{rem-rho-A}. In other words, it corresponds to Theorem~\ref{prod-id_Omega-nuc} for the classical situation when $\vr=-\nd$. Thus it represents another example for the {\em Slope-$\nd$-rule}. 
\end{remark}

  Finally we collect some special cases and begin with the situation when the target space is $\bmo(\Omega)$, recall Remark~\ref{bmo-def}. We can now give the counterpart of the compactness result in Remark~\ref{comp-bmo}.
  
\begin{corollary}\label{nuc-at-bmo}
Let $s\in \real$, $1\leq q\leq\infty$, $1\leq p\leq \infty$ (with $p<\infty$ in case of $A=F$). 
\bli
  \item[{\upshape\bfseries (i)}]
Assume $1\le p\le u < \infty $ or $p=u=\infty$. Then 
\[
\id_\tau : \MA(\Omega) \hookrightarrow \bmo(\Omega)
\]
is nuclear if, and only if, $s>\nd$. 
  \item[{\upshape\bfseries (ii)}]
Assume $\tau\geq 0$. Then 
\[
\id_\tau : \at(\Omega) \hookrightarrow \bmo(\Omega)
\]
is nuclear if, and only if, $s>\nd - \nd\left(\tau-\frac1p\right)_+$. 
\eli
\end{corollary}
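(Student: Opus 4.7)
My plan is to reduce both parts to the main nuclearity theorems via the identifications of $\bmo$ recalled in Remark~\ref{bmo-def}, namely $\bmo(\Omega) = F^0_{\infty,2}(\Omega) = B^{0, 1/2}_{2, 2}(\Omega)$, and to exploit the ideal property \eqref{i1} together with the fact that the nuclearity conditions \eqref{bd3acomp-suff} and \eqref{tau-nuc} are independent of the fine indices $q_i$.

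For part (i), I would first sandwich $\bmo(\Omega)$ via the elementary embedding \eqref{elem-tau} with $\tau = 0$, namely $B^0_{\infty,2}(\Omega) \hookrightarrow F^0_{\infty,2}(\Omega) = \bmo(\Omega) \hookrightarrow B^0_{\infty,\infty}(\Omega)$, and identify $B^0_{\infty,q}(\Omega) = \mathcal{N}^0_{\infty,\infty,q}(\Omega)$ via \eqref{MB=B}. Then the sufficiency (resp.\ necessity) of $s > \nd$ for the nuclearity of $\MA(\Omega) \hookrightarrow \bmo(\Omega)$ follows from Theorem~\ref{comp-NE} applied to $\MA(\Omega) \hookrightarrow \mathcal{N}^0_{\infty,\infty,2}(\Omega)$ (resp.\ to $\MA(\Omega) \hookrightarrow \mathcal{N}^0_{\infty,\infty,\infty}(\Omega)$) combined with \eqref{i1}. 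With target parameters $s_2 = 0$ and $u_2 = p_2 = \infty$, the critical condition \eqref{bd3acomp-suff} computes to
\[
\frac{s}{\nd} \,>\, \frac{1}{u} - 0 + \frac{1}{\tn(u, \infty)} \,=\, \frac{1}{u} + \Big(1 - \frac{1}{u}\Big) \,=\, 1,
\]
using $\max\{1, p/\infty\} = 1$ and the second branch of \eqref{tongnumber} for $\tn(u,\infty)$. This is exactly $s > \nd$, independently of the source scale $\MA \in \{\MB, \MF\}$.

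For part (ii), I would use $\bmo(\Omega) = B^{0, 1/2}_{2, 2}(\Omega)$ and apply Theorem~\ref{C-nuc-at} with target parameters $s_2 = 0$, $\tau_2 = 1/2 = 1/p_2$, $p_2 = 2$, $q_2 = 2$. This places the target precisely in the limiting situation $\tau_2 = 1/p_2$ with $q_2 < \infty$ that is handled explicitly in Case~4 of the proof of Theorem~\ref{C-nuc-at}, so the theorem does characterise nuclearity via \eqref{tau-nuc}. Computing $\critical$ via \eqref{gamma_nuc}: for $\tau \geq 1/p$ the first line gives $\critical = 1 + 1/p - \tau - 1/2 + 1/2 = 1 - (\tau - 1/p)$, while for $\tau < 1/p$ the second line applies (since $\tau_2 = 1/p_2$) and yields $\critical = 1 - 1/p_2 + \tau_2 = 1$. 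Combining both, $\critical = 1 - (\tau - 1/p)_+$, so the condition $\frac{s}{\nd} > \critical$ is equivalent to $s > \nd - \nd (\tau - 1/p)_+$ as claimed.

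The only delicate bookkeeping is at the boundary $\tau_2 = 1/p_2$ with $q_2 = 2 < \infty$ in part~(ii), where I rely on the sandwich argument from Case~4 of Theorem~\ref{C-nuc-at} to push the nuclearity characterisation across the limiting $q_2 < \infty$ case. All remaining steps are direct substitutions into \eqref{bd3acomp-suff}, \eqref{gamma_nuc}, and \eqref{tongnumber}, and both scales $A \in \{B, F\}$ are treated uniformly since the conditions are $q$-independent and the target identifications apply in both settings.
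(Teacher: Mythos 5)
Your argument is correct and in substance coincides with the paper's (one-line) proof: both parts are reduced to the main nuclearity theorems through the identification $\bmo(\Omega)=F^{0}_{\infty,2}(\Omega)=F^{0,1/p}_{p,2}(\Omega)=B^{0,1/2}_{2,2}(\Omega)$ from \eqref{ft=bmo}, and your evaluations of the critical exponents in \eqref{bd3acomp-suff} and \eqref{gamma_nuc} are right. The only cosmetic difference is that in part (i) you route through Theorem~\ref{comp-NE} with target $\mathcal{N}^0_{\infty,\infty,q}(\Omega)=B^0_{\infty,q}(\Omega)$ instead of Theorem~\ref{C-nuc-at}; just note that \eqref{elem-tau} is stated for $p<\infty$, so the sandwich $B^0_{\infty,2}(\Omega)\hookrightarrow\bmo(\Omega)\hookrightarrow B^0_{\infty,\infty}(\Omega)$ is better justified via \eqref{010319} and the embedding $B^{s}_{\infty,q}(\Omega)\hookrightarrow A^{s,1/p}_{p,q}(\Omega)$ already invoked in Case~4 of the proof of Theorem~\ref{C-nuc-at}.
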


\begin{proof}
  The result follows immediately from Theorem~\ref{C-nuc-at} together with \eqref{ft=bmo} (for spaces restricted to $\Omega$).
\end{proof}

\begin{remark}
The result remains true when $\bmo(\Omega)$ is replaced by $L_\infty(\Omega)$ or $C(\Omega)$.
\end{remark}

Another case of some special interest is the case $\tau_1=\tau_2$ in Theorem~\ref{C-nuc-at}. Note that in this case \eqref{gamma_nuc} reads as
\[
  \overline{\gamma}(\tau,\tau,p_1,p_2) = \begin{cases} 1-\frac{1}{p_2}+\frac{1}{p_1}, & \text{if}\ p_1\geq p_2, \\ 
1-\min\left\{0, \frac{1}{p_2}-\min\left\{\tau,\frac{1}{p_1}\right\}\right\}, & \text{if}\ p_1\leq p_2.
  \end{cases}
\]

Now Theorem~\ref{C-nuc-at} implies the following result.

\begin{corollary}\label{C-nuc-tau-tau}
  Let $s_i\in \real$, $1\leq q_i\leq\infty$, $1\leq p_i\leq \infty$ (with $p_i<\infty$ in case of $A=F$), $i=1,2$, and $\tau\geq 0$. 
  Then 
\[
\id_\tau : A^{s_1,\tau}_{p_1,q_1}(\Omega) \hookrightarrow A^{s_2,\tau}_{p_2,q_2}(\Omega)
\]
is nuclear if, and only if,  
\begin{equation}\label{tau-nuc-3}
  \frac{s_1-s_2}{\nd} > \begin{cases} 1-\frac{1}{p_2}+\frac{1}{p_1}, & \text{if}\ p_1\geq p_2, \\[1ex] 
1-\min\left\{0, \frac{1}{p_2}-\min\left\{\tau,\frac{1}{p_1}\right\}\right\}, & \text{if}\ p_1\leq p_2.
  \end{cases}
  \end{equation}
\end{corollary}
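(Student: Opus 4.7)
The plan is to apply Theorem~\ref{C-nuc-at} with $\tau_1=\tau_2=\tau$ and reduce the piecewise expression $\critical$ from \eqref{gamma_nuc} to the two-branch formula on the right-hand side of \eqref{tau-nuc-3}. No further ingredients are required; the entire argument is a case analysis. The three branches of \eqref{gamma_nuc} distinguish cases according to the position of $\tau$ relative to $1/p_1$ and $1/p_2$, whereas \eqref{tau-nuc-3} has only two branches, depending on the order of $p_1$ and $p_2$, so the work consists of showing that under the specialisation $\tau_1=\tau_2=\tau$ the $\tau$-dependence collapses to the claimed form.

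First I would treat the subcase $p_1\geq p_2$. Here the inequality $1/p_1\le 1/p_2$ renders the middle branch of \eqref{gamma_nuc} (which requires $\tau\ge 1/p_2$ and $\tau<1/p_1$) vacuous, and the first branch ($\tau\ge 1/p_1$) gives $\critical=1+1/p_1-1/p_2$ by direct substitution. In the third branch ($\tau<1/p_1$, hence also $\tau<1/p_2$), the condition $p_2/p_1\le 1$ reduces $\max\{\tau,(p_2/p_1)\tau\}$ to $\tau$, so the bracket collapses to $\max\{0,1/p_2-1/p_1\}=1/p_2-1/p_1$, yielding the same value $1+1/p_1-1/p_2$. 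This matches the first line of \eqref{tau-nuc-3}.

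Next, in the subcase $p_1\le p_2$, each of the three branches of \eqref{gamma_nuc} has to be matched with the appropriate region of $1-\min\{0,1/p_2-\min\{\tau,1/p_1\}\}$, depending on whether $\tau\ge 1/p_1$, $1/p_2\le\tau<1/p_1$, or $\tau<1/p_2$. The first two regions are immediate once the respective $\min$ is resolved. In the third region the key algebraic observation is the factorisation
\[
\frac{1}{p_2}-\tau-\frac{1}{p_1}+\frac{p_2}{p_1}\tau \;=\; \frac{p_2-p_1}{p_1}\Bigl(\tau-\frac{1}{p_2}\Bigr)\ \le\ 0,
\]
which shows that the positive part inside the bracket of the third line of \eqref{gamma_nuc} vanishes, so $\critical$ reduces to $1$, in agreement with $1-\min\{0,1/p_2-\tau\}=1$ for $\tau<1/p_2$. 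The main, and essentially only, technical point in the whole proof is this factorisation; everything else is direct substitution and the verification that the middle branch of \eqref{gamma_nuc} is vacuous when $p_1\geq p_2$.
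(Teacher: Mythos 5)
Your proposal is correct and follows the same route as the paper: the paper likewise obtains the corollary by specialising Theorem~\ref{C-nuc-at} to $\tau_1=\tau_2=\tau$ and reducing \eqref{gamma_nuc} to the two-branch formula (it merely states the reduced expression for $\overline{\gamma}(\tau,\tau,p_1,p_2)$ without writing out the case analysis you supply). Your verification of the three branches, including the vacuity of the middle branch for $p_1\geq p_2$ and the factorisation showing the positive part vanishes when $\tau<\frac{1}{p_2}\leq\frac{1}{p_1}$, is accurate.
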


\begin{remark}
  When $\tau=0$, Corollary~\ref{C-nuc-tau-tau} corresponds to Theorem~\ref{prod-id_Omega-nuc} and \eqref{tau-nuc-3} coincides with \eqref{id_Omega-nuclear}. So it can be seen as some $\tau$-lifted version of the classical case.
\end{remark}


We conclude the paper with the nuclearity results for the embeddings into spaces $L_r(\Omega)$, $1\leq r<\infty$, which can also be seen as counterpart  of Corollary~\ref{nuc-at-bmo} which refers to $r=\infty$. Furthermore, it corresponds to \cite[Corollary~3.10]{ghs20} where we dealt with the compactness in that situation.

\begin{corollary}\label{nuc-atauinLr}
 Let $s\in\real$, $1\leq p,q\leq\infty$ (with $p<\infty$ if $A=F$), $1\leq r<\infty$.
  \bli
  \item[{\upshape\bfseries (i)\hfill}] Let $\tau\geq 0$. Then
\[
\at(\Omega)\hookrightarrow L_r(\Omega)\]
\text{is nuclear if, and only if,}
\begin{equation}\label{lim-12}
  {  s>\nd \begin{cases} 1 + \frac1p-\tau-\frac1r & \text{if}\quad \tau\geq \frac1p, \\[1ex]
  1-\left(\frac1r-\frac1p+\tau\right)_+ & \text{if}\quad \tau\leq \frac1p.\end{cases}}
\end{equation}
  \item[{\upshape\bfseries (ii)\hfill}] Let $u\in [p, \infty)$ (or $p=u=\infty$ if $\mathcal{A}=\mathcal{N}$). Then
\begin{equation}\label{lim-13}
  \MA(\Omega)\hookrightarrow L_r(\Omega) \quad\text{is nuclear\quad if, and only if,}\quad
\frac{s}{\nd}>1-\left(\frac1r-\frac1u\right)_+\ .
\end{equation}
 \eli
\end{corollary}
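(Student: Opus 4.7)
The plan is to reduce Corollary~\ref{nuc-atauinLr} to the already established Theorems~\ref{C-nuc-at} and~\ref{comp-NE} by identifying $L_r(\Omega)$ with (or sandwiching it between) appropriate Morrey-type smoothness spaces with $s=0$ and then specialising the abstract conditions~\eqref{tau-nuc} and~\eqref{bd3acomp-suff}.

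First I treat the range $1<r<\infty$. By \eqref{E-Lp} one has, after restriction to $\Omega$, the identities $L_r(\Omega)=F^0_{r,2}(\Omega)=F^{0,0}_{r,2}(\Omega)=\mathcal{E}^0_{r,r,2}(\Omega)$. For part~(i) I apply Theorem~\ref{C-nuc-at} with target parameters $s_2=0$, $p_2=r$, $\tau_2=0$, $q_2=2$; the condition~\eqref{tau-nuc} then becomes $s/\nd>\critical$ with $\tau_2=0<1/r=1/p_2$. Splitting according to whether $\tau\ge 1/p$ or $\tau<1/p$ one reads off from \eqref{gamma_nuc} the values $1+1/p-\tau-1/r$ and $1-\max\{0,1/r-1/p+\tau\}=1-(1/r-1/p+\tau)_+$ respectively, which is exactly~\eqref{lim-12}. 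For part~(ii) I apply Theorem~\ref{comp-NE} with $s_2=0$, $u_2=p_2=r$, $q_2=2$; computing $\max\{1,p/r\}\,r=\max\{p,r\}$ and distinguishing the three sub-cases $r\le p\le u$, $p\le r\le u$ and $p\le u\le r$ by means of \eqref{tongnumber}, in each sub-case the quantity $1/u-1/r+1/\tn(u,\max\{p,r\})$ simplifies to $1-(1/r-1/u)_+$, which matches~\eqref{lim-13}.

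For the endpoint $r=1$ neither of the identifications above is available, so I use the classical sandwich $B^0_{1,1}(\Omega)\hookrightarrow L_1(\Omega)\hookrightarrow B^0_{1,\infty}(\Omega)$. For the sufficiency, assuming the strict inequality in~\eqref{lim-12} (resp.~\eqref{lim-13}), Theorem~\ref{C-nuc-at} (resp.~Theorem~\ref{comp-NE}) gives nuclearity of $\at(\Omega)\hookrightarrow B^{0,0}_{1,1}(\Omega)$ (resp.\ $\MA(\Omega)\hookrightarrow\mathcal{N}^0_{1,1,1}(\Omega)=B^0_{1,1}(\Omega)$), and composing with the continuous embedding $B^0_{1,1}(\Omega)\hookrightarrow L_1(\Omega)$ and using the ideal property~\eqref{i1} yields nuclearity of the embedding into $L_1(\Omega)$. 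For the necessity, composing a putative nuclear embedding with $L_1(\Omega)\hookrightarrow B^0_{1,\infty}(\Omega)$ and applying Theorems~\ref{C-nuc-at}/\ref{comp-NE} to this new target forces exactly the same inequality, because both~\eqref{tau-nuc} and~\eqref{bd3acomp-suff} are independent of the fine parameters $q_1,q_2$.

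The only genuinely delicate point is the bookkeeping for~\eqref{gamma_nuc} and~\eqref{tongnumber} in the various parameter regimes, in particular treating the boundary values $p=\infty$ (where $1/p=0$ forces $\tau\ge 1/p$ always, so only the first branch of \eqref{lim-12} survives) and $p=u=\infty$ in part~(ii) (which reduces to the classical Theorem~\ref{prod-id_Omega-nuc} via the sandwich argument). Everything else is a direct bookkeeping exercise on top of the two main theorems of Section~\ref{nuc-morrey-func}.
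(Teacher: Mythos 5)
Your proposal is correct and takes essentially the same route as the paper: both reduce to Theorems~\ref{C-nuc-at} and \ref{comp-NE} by realising $L_r(\Omega)$ as an $s=0$ space in the relevant scale and then exploiting the independence of the criteria \eqref{tau-nuc} and \eqref{bd3acomp-suff} from the fine parameters $q_i$. The only (cosmetic) difference is that the paper uses the sandwich $B^0_{r,1}(\Omega)\hookrightarrow L_r(\Omega)\hookrightarrow B^0_{r,\infty}(\Omega)$ uniformly for all $1\le r<\infty$ --- which also silently settles the $B$-versus-$F$ mismatch between source and target that your Littlewood--Paley identification $L_r=F^0_{r,2}$ (for $r>1$) leaves to \eqref{elem-tau} and the $q$-independence --- and, in part (ii), reduces $\MF(\Omega)$ to part (i) via the coincidence $\MF(\Omega)=\ft(\Omega)$ with $\tau=\frac1p-\frac1u$ instead of your direct application of Theorem~\ref{comp-NE}.
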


\begin{proof}
We first show (i) and start with the case $A=B$. Note that $B^0_{r,1}(\Omega)\hookrightarrow L_r(\Omega)\hookrightarrow B^0_{r,\infty }(\Omega)$, $1\leq r\leq\infty$, so we apply  Theorem~\ref{C-nuc-at} for $s_1=s$, $s_2=0$, $p_1=p$, $p_2=r$, $\tau_1=\tau$, $\tau_2=0$, $q_1=q$, and $q_2=1$ or $q_2=\infty$ to obtain the necessary and sufficient conditions. Again we benefit from the independence of \eqref{tau-nuc-3} with respect to the $q$-parameters. The case $A=F$ follows by \eqref{elem-tau} again.\\
We deal with (ii), which is well-known when $u=p$, recall Theorem~\ref{prod-id_Omega-nuc}. Assume now $u>p$ and let $\tau=\frac1p-\frac1u$. Using \eqref{fte}, $\MF(\Omega)\hookrightarrow L_r(\Omega)$ is then nuclear if, and only if, $\MF(\Omega)=\ft(\Omega) \hookrightarrow L_r(\Omega)$ is nuclear which amounts to the desired result in \eqref{lim-13} by (i). We come to the source spaces $\MB(\Omega)$. The sufficiency is a consequence of \eqref{N-BT-emb} and (i) again, while the necessity follows from Theorem~\ref{comp-NE} applied to $\MAe=\MB$ and $\MAz=\mathcal{N}^0_{r,r\infty}=B^0_{r,\infty}$ together with the embeddings $L_r(\Omega)\hookrightarrow B^0_{r,\infty}(\Omega)$. 
\end{proof}


\bigskip

\noindent Dorothee D. Haroske\\
\noindent  Institute of Mathematics,
Friedrich Schiller University Jena, 07737 Jena, Germany\\
\noindent {\it E-mail}:  \texttt{dorothee.haroske@uni-jena.de}

\bigskip

\noindent Leszek Skrzypczak\\
\noindent  Faculty of Mathematics and Computer Science,
Adam Mickiewicz University,\\
 ul. Uniwersytetu Pozna\'nskiego 4, 61-614 Pozna\'n,
Poland\\
\noindent {\it E-mail}:  \texttt{leszek.skrzypczak@amu.edu.pl}

\end{document}